\documentclass[11pt]{amsart}

\usepackage[margin=1.05in]{geometry}
\usepackage{amsmath,amssymb,amsthm,mathtools}
\usepackage{microtype}
\usepackage{hyperref}
\usepackage[nameinlink,capitalise]{cleveref}

\numberwithin{equation}{section}

\hypersetup{colorlinks=true,linkcolor=blue,citecolor=blue,urlcolor=blue}

\newtheorem{theorem}{Theorem}[section]
\newtheorem{proposition}[theorem]{Proposition}
\newtheorem{lemma}[theorem]{Lemma}
\theoremstyle{definition}
\newtheorem{remark}[theorem]{Remark}

\newcommand{\Sn}{\mathbb S^{n-1}}
\newcommand{\tr}{\operatorname{tr}}
\newcommand{\osc}{\operatorname{osc}}
\newcommand{\dist}{\operatorname{dist}}
\newcommand{\supp}{\operatorname{supp}}
\newcommand{\Gtwo}{\Gamma_2}
\newcommand{\Slin}{\mathcal S}
\newcommand{\dd}{d_2}
\newcommand{\ip}[2]{\left[#1,#2\right]}
\newcommand{\norm}[1]{\left\lVert #1\right\rVert}
\newcommand{\abs}[1]{\left|#1\right|}
\newcommand{\eps}{\varepsilon}

\let\cal\mathcal

\title{Interior \(C^{2,\alpha}\) Regularity for the Quadratic Hessian Equation}
\author{Ruosi Chen, Xingchen Zhou and Ruixuan Zhu}

\address{Ruosi Chen, Department of Mathematical Sciences, Tsinghua University, Beijing, 100084, China.}
\email{crs22@mails.tsinghua.edu.cn}

\address{Xingchen Zhou, School of Mathematics and Statistics, Hainan University, Haikou, 570228, China.}
\email{zxc3zxc4zxc5@stu.xjtu.edu.cn}

\address{Ruixuan Zhu, Institute for Theoretical Sciences, Westlake University, Hangzhou, 310030, China.}
\email{zhuruixuan@westlake.edu.cn}
\date{}

\begin{document}

\begin{abstract}
We establish interior \(C^{2,\alpha}\) regularity for admissible solutions of the quadratic Hessian equation with positive \(C^\alpha\) right-hand side on the full positive branch.
The main ingredients are a quantitative large-trace propagation argument and an adaptive Dirichlet comparison method.
\end{abstract}

\subjclass[2020]{35B65}
\keywords{2-Hessian equation, interior \(C^{2,\alpha}\) regularity}

\maketitle

\section{Introduction}

We study the interior $C^{2,\alpha}$ regularity of 2-convex solutions to
\begin{equation}
 \sigma_2(D^2u)=f>0,
  \label{eq:main-pde}
\end{equation}
in general dimensions.
The equation is considered on the standard G\aa rding cone
\[
 \Gtwo=\{A\in\operatorname{Sym}(n):
             \sigma_1(A)>0,\ \sigma_2(A)>0\}.
\]
On this branch the function
\[
 G(A)=\sqrt{\sigma_2(A)},\qquad A\in\Gtwo,
\]
is elliptic, concave, and homogeneous of degree one, and \eqref{eq:main-pde} is equivalent to \(G(D^2u)=g:=\sqrt f\).
\begin{theorem}\label{thm:smooth}
Let \(n\ge2\) and \(0<\alpha<1\).
Suppose that $u\in C^\infty(B_2)$ satisfies the equation
\[
 \sigma_2(D^2u)=f\quad\text{in }B_2, \qquad D^2u(x)\in\Gtwo,
 \]
where \(f\in C^\alpha(B_2)\) and \(f\ge f_0>0\).
Then
\begin{equation}
 \;
 \norm{u}_{C^{2,\alpha}(B_{1/2})}
 \le C\!\left(n,\alpha,f_0,\norm{f}_{C^\alpha(B_2)}, \norm{u}_{L^\infty(B_2)}\right).\;
 \label{eq:smooth-estimate}
\end{equation}
\end{theorem}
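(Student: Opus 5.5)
The plan is to reduce \eqref{eq:smooth-estimate} to an interior Hessian bound and then run a Caffarelli-type perturbation argument. Since \(f\) is only \(C^\alpha\), we cannot differentiate the equation twice, so every step that uses the PDE structure at second order is applied to constant-right-hand-side comparison solutions, never to \(u\) itself.

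\textbf{Step 1 (gradient and integral control).} First I would show \(\norm{Du}_{L^\infty(B_{3/2})}\le C(n,\norm{u}_{L^\infty(B_2)},\norm{f}_{L^\infty})\). This should follow from the standard Chou--Wang type interior gradient estimate for \(k\)-Hessian equations, or from the subharmonicity of \(u\) (since \(\sigma_1(D^2u)>0\)) together with a barrier argument. Because \(\Delta u>0\), integrating against a cutoff then gives \(\int_{B_{3/2}}\Delta u\le C\). So the trace \(\sigma_1(D^2u)\) is controlled in \(L^1\).

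\textbf{Step 2 (frozen-coefficient comparison).} On a ball \(B_r(x_0)\subset B_{3/2}\), freeze \(g\) at \(g(x_0)\). Let \(v\) solve \(G(D^2v)=g(x_0)\) in \(B_r(x_0)\) with \(v=u\) on \(\partial B_r(x_0)\); solvability in \(\Gtwo\) comes from Caffarelli--Nirenberg--Spruck, after slightly shrinking the ball if needed. By concavity and homogeneity of \(G\), the difference \(w=u-v\) satisfies a linear inequality whose right-hand side is \(\pm|g-g(x_0)|\le Cr^\alpha\). The coefficient matrix of that inequality is \(G^{ij}\) evaluated along the segment between \(D^2u\) and \(D^2v\). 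The ABP estimate, with determinant normalized by \(\det(G^{ij})^{1/n}\ge c\) (valid on \(\Gtwo\) after homogenization), should give \(\|u-v\|_{L^\infty(B_r)}\le Cr^{2+\alpha}\). This holds provided the Hessians stay in a region where \(G^{ij}\) is comparable to the identity up to a factor controlled by the trace. The word ``adaptive'' refers to the choice of \(r\): it is tied to the local size of \(\Delta u\), so that the rescaled problem always has bounded trace.

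\textbf{Step 3 (large-trace propagation, the main obstacle).} This is the interior \(C^{1,1}\) bound \(\sup_{B_1}\Delta u\le C\) under only \(C^\alpha\) data. I would use the Jacobi-type inequality for \(b=\log\Delta v\) on the smooth comparison solutions \(v\): for \(\sigma_2=\mathrm{const}\), \(b\) is a subsolution of the linearized operator, modulo a gradient term handled by the concavity of \(G\). The quantitative propagation statement I would aim for is the following. If \(\Delta u(x_0)\ge M\) with \(M\) large, then \(\Delta v\ge M/2\) on a set of measure at least \(c\,r^n\) inside \(B_r(x_0)\), with \(r\) chosen adaptively as a power of \(M^{-1}\). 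Iterating this over a Vitali-type covering, the measure of \(\{\Delta u\ge M\}\) can be bounded below by an amount that contradicts the \(L^1\) bound from Step 1 once \(M\) exceeds a constant. I expect the delicate point to be ensuring that the error from Step 2 does not destroy the lower bound for \(\Delta v\). This needs an interior \(C^{1,1}\) estimate for \(v\) that is independent of \(M\) after rescaling, and I would first try to obtain it from Step 1's \(L^1\) control applied at the rescaled level.

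\textbf{Step 4 (conclusion).} Once \(|D^2u|\le C\) on \(B_1\), the equation \(G(D^2u)=g\) with \(g\ge\sqrt{f_0}\) forces \(D^2u\) into a compact subset of \(\Gtwo\). There \(G\) is uniformly elliptic and concave, so Caffarelli's perturbation theorem for concave fully nonlinear equations with \(C^\alpha\) right-hand side yields \(\norm{u}_{C^{2,\alpha}(B_{1/2})}\le C\). For the comparison problems this relies on the Evans--Krylov estimate, and the resulting constant depends only on \(n,\alpha,f_0,\norm{f}_{C^\alpha(B_2)},\norm{u}_{L^\infty(B_2)}\).
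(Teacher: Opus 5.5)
Your proposal has a genuine gap in Step~3, which is where the whole difficulty of the theorem lies.

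Step~2 is correct. In fact the ABP bound $\norm{u-v}_{L^\infty(B_r)}\le Cr^{2+\alpha}$ needs no proviso, since concavity, homogeneity and Maclaurin give $\det(G^{ij})^{1/n}\ge c(n)$ everywhere on $\Gtwo$. But $L^\infty$ closeness of $u$ and $v$ cannot carry the pointwise fact $\Delta u(x_0)\ge M$ over to $\Delta v$ on any set. You would need second differences of $u$ of size $M$ at some scale $h$ with $r^{2+\alpha}/h^2\ll M$, and a single large value of $\Delta u$ gives nothing of the kind unless $D^2u$ has first been capped near $x_0$. The paper spends all of \cref{sec:large-trace} on exactly this. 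A logarithmic stopping argument gives $0<\Delta u\le 2M$ on $B_R(x_0)$ with $R=\ell/F(M)$. Bessel-resolvent averaging then produces a directional second difference $Q^e_hu\ge cM$ at scale $h\sim(MF(M))^{-1}$. A transverse local maximum principle yields a set $E$ with $\Delta u\ge cM$ on $E$ and $\abs{E}\ge cR^{n-1}h$. The transfer to the comparison function is made in measure, via $\int_D\dd(D^2u,D^2v)^2\le\int_D(G(D^2u)-G(D^2v))^2$ and the reverse Cauchy--Schwarz trace gap, not via $L^\infty$ closeness.

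Your contradiction mechanism also fails as stated. By Chebyshev, a set where $\Delta u\ge M$ contradicts $\int\Delta u\le C$ only if its measure exceeds $C/M$. The set one can actually produce has measure of order $R^{n-1}h=o(M^{-1})$, which is perfectly compatible with the $L^1$ bound, and nothing in your proposal gives a larger one. The paper's contradiction is of a different kind. On a fixed ball it solves $G(D^2v)+\zeta(g-b_\eps)\eta(\Delta v/P)=g$, which replaces $g$ by the smooth $b_\eps$ exactly where $\Delta v\ge\tau_*P$. A Pogorelov/Chou--Wang maximum-principle calculation differentiates the right side twice, but only on that plateau. It gives $\sup_U\Delta v\le C(1+K^2+KL+K\sqrt J)$ with $K\lesssim 1+P^{1/2}+L^{1/5}$. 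The choice $\eps=M^{-1/8}$, $P=M^{1-\alpha/8}$ makes this $o(M)$ while keeping the transfer error $\delta_\eps^2/P$ negligible compared with $\abs{E}$.

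The tools you name for Step~3 are not available either:

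\begin{itemize}
\item \textbf{Jacobi inequality.} Concavity makes $\Delta v$ a subsolution of $G^{ij}\partial_{ij}$ when the right side is constant. It does not absorb the negative term $G^{ij}(\Delta v)_i(\Delta v)_j/\Delta v$. A Jacobi inequality for $\log\Delta v$ on the full positive branch in general dimension is precisely what earlier works could obtain only under extra hypotheses (semiconvexity, a lower bound on $\sigma_3$, low dimension). Moreover, $G^{ij}$ is not uniformly elliptic before the Hessian is bounded, so such an inequality would still not give a mean-value estimate without further structure.
\item \textbf{Rescaled $C^{1,1}$ bound.} The $M$-independent $C^{1,1}$ bound for the rescaled $v$ is circular. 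Normalizing $v$ on $B_r(x_0)$ requires control of $\osc_{B_r(x_0)}(u-p)/r^2$ for an affine $p$, that is, control of $D^2u$ at scale $r$.
\item \textbf{Gradient bound in Step~1.} This is unjustified for $C^\alpha$ data. The Chou--Wang gradient estimate depends on a derivative of the right side; this is the source of the factor $L^{1/5}$ in \cref{lem:gradient-bound}, and the paper deliberately never uses $\norm{Du}_{L^\infty}$. Subharmonicity gives no Lipschitz bound. The local $L^1$ bound for $\Delta u$ does, however, follow from $\Delta u>0$ and $\norm{u}_{L^\infty}$ alone.
\end{itemize}

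Step~4 is fine and agrees with the paper: Evans--Krylov gives a seed modulus $D^2u\in C^{\beta_0}$, then Caffarelli's perturbation theorem reaches the exponent $\alpha$.
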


We use the convention of a standard class \(\Phi_2(\Omega)\) of continuous \(2\)-convex functions introduced in \cite{TW1997,TW1999}.
A continuous function \(u\) belongs to \(\Phi_2(\Omega)\) if it is locally a uniform limit of smooth functions whose Hessians lie in \(\overline\Gtwo\).
Equivalently, every \(C^2\) function touching \(u\) from above at \(x\) has Hessian in \(\overline\Gtwo\) at that point.
For \(f \in C(\Omega)\), a function \(u\in\Phi_2(\Omega)\) is an admissible viscosity solution of \(\sigma_2(D^2u)= f\) if every upper test \(\phi\) satisfies \( \sigma_2(D^2\phi(x))\ge f(x), \) and every lower test which is \(2\)-convex near the contact point satisfies the reverse inequality.

\begin{theorem}\label{thm:viscosity}
Let \(u\in C(B_2)\cap\Phi_2(B_2)\) be an admissible viscosity solution of
\[
 \sigma_2(D^2u)=f\in C^\alpha(B_2),\qquad f\ge f_0>0.
\]
Then
\[
 u\in C^{2,\alpha}_{\mathrm{loc}}(B_2).
\]
\end{theorem}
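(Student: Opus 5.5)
We deduce \cref{thm:viscosity} from the a priori estimate \cref{thm:smooth} by approximation. Fix a ball \(B_{2r}(x_0)\Subset B_2\). After translating and rescaling \(v(y)=r^{-2}u(x_0+ry)\), we may assume the ball is \(B_2\) with \(u\) continuous on \(\overline{B_2}\). This changes \(f\) only through \(f(x_0+ry)\), so \(f_0\) is preserved and \(\norm{f}_{C^\alpha}\) stays controlled. Mollify \(f\) to get \(f_k\in C^\infty(\overline{B_2})\) with \(f_k\ge f_0\), \(\norm{f_k}_{C^\alpha(B_2)}\le C\norm{f}_{C^\alpha}\), and \(f_k\to f\) uniformly. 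Next approximate the boundary data: take smooth \(\varphi_k\to u\) uniformly on \(\partial B_2\), for instance mollifications of \(u\) restricted to a slightly larger ball.

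For each \(k\), solve the Dirichlet problem
\[
\sigma_2(D^2u_k)=f_k \text{ in } B_2,\qquad u_k=\varphi_k \text{ on }\partial B_2,
\]
with \(u_k\) admissible. Since \(B_2\) is uniformly convex, hence uniformly \(1\)-convex, the Caffarelli--Nirenberg--Spruck / Trudinger theory gives a unique admissible solution \(u_k\in C^\infty(B_2)\cap C(\overline{B_2})\). If boundary smoothness is an issue, we can solve on slightly smaller balls. By the comparison principle for admissible viscosity solutions (Trudinger--Wang), \(\sup_{B_2}|u_k-u_j|\le \sup_{\partial B_2}|\varphi_k-\varphi_j| + C\,\sup|f_k-f_j|\). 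Here the \(f\)-dependence is handled by comparing with \(u_j\pm\varepsilon(|x|^2-4)\)-type perturbations, which shift \(\sigma_2\) by a controlled amount thanks to \(f\ge f_0>0\) and concavity of \(G\). Hence \(u_k\) is Cauchy in \(C(\overline{B_2})\), and \(\norm{u_k}_{L^\infty(B_2)}\) is uniformly bounded. Applying \cref{thm:smooth} to \(u_k\), we obtain \(\norm{u_k}_{C^{2,\alpha}(B_{1/2})}\le C\) uniformly in \(k\). By Arzelà--Ascoli, a subsequence converges in \(C^{2,\beta}(B_{1/2})\) for \(\beta<\alpha\) to a limit \(w\in C^{2,\alpha}(B_{1/2})\), and \(u_k\to \tilde u\) uniformly on \(\overline{B_2}\).

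Stability of viscosity solutions under uniform convergence shows that \(\tilde u\) is an admissible viscosity solution of \(\sigma_2(D^2\tilde u)=f\) with \(\tilde u=u\) on \(\partial B_2\). The main point is then uniqueness: the comparison principle for admissible viscosity solutions in \(\Phi_2\) with positive continuous right-hand side (Trudinger--Wang / Urbas) gives \(\tilde u=u\). Consequently \(u=w\in C^{2,\alpha}(B_{1/2})\), and undoing the scaling and covering gives \(u\in C^{2,\alpha}_{\mathrm{loc}}(B_2)\).

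The step I expect to be the main obstacle is this comparison/uniqueness step for general continuous viscosity solutions, including the \(f\)-dependence used in the Cauchy estimate. The test-function definition only constrains \(2\)-convex lower tests, so the standard Jensen–Ishii argument must be run within the admissible class. My first approach is to perturb a subsolution to a strict one via \(u+\varepsilon|x|^2\), which strictly increases \(G\) by concavity and homogeneity. Then I would apply sup-convolution, which preserves \(\Phi_2\), to reach semiconvex functions, where Alexandrov twice-differentiability makes the admissibility constraint harmless.
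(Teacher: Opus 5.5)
Your argument is correct. Its skeleton matches the paper's: mollify the data, solve smooth Dirichlet problems on a ball, apply \cref{thm:smooth} to the approximants, and pass to the limit. The difference is entirely in how the limit is identified with \(u\).

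\textbf{The paper's identification.} It never forms a Cauchy sequence and never invokes stability. Instead it builds barriers from \(u\) itself: \(U_m^-=a_mu-(a_m-1)S-e_m\) and \(U_m^+=b_mu+(1-b_m)S+e_m\), with \(a_m,b_m\to1\). Using \(\mu_2[u]=f\,dx\) and the degree-two homogeneity of the Hessian measure, it gets \(\mu_2[U_m^-]\ge\mu_2[u_m]\ge\mu_2[U_m^+]\). The Trudinger--Wang comparison theorem for Hessian measures then gives the quantitative bound \(\norm{u_m-u}_{L^\infty(D)}\le\eta_m\osc_Du+e_m\) directly.

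\textbf{Your identification.} Your route (Cauchy estimate, uniform limit \(\tilde u\), stability, uniqueness) stays in the viscosity framework. The price is a comparison principle between two merely continuous admissible viscosity solutions. That is the genuinely delicate ingredient, since only \(2\)-convex lower tests are constrained. It is true and citable, and your sketch (strictification, then sup/inf-convolution and Jensen's lemma) is the standard proof. However, it is heavier than necessary.

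\textbf{A shortcut.} The same homogeneity trick the paper exploits lets you compare \(u\) directly with the classical \(u_k\), using only the definition of viscosity solution.
\begin{itemize}
\item If \(b<1\) satisfies \(b^2f_k<f\), then \(bu_k+c\) can never touch \(u\) from above at an interior point.
\item If \(a>1\) satisfies \(a^2f_k>f\), then \(au_k+c\) is a smooth \(2\)-convex lower test that can never touch \(u\) from below.
\end{itemize}
Hence \(u-bu_k\) and \(u-au_k\) attain their extrema on \(\partial B_2\). Combined with your uniform \(L^\infty\) bound on \(u_k\), this gives \(u_k\to u\) uniformly. You then need no stability argument, no viscosity uniqueness theorem, and no Hessian measures.

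\textbf{Two small corrections.}
\begin{itemize}
\item Your Cauchy estimate among the smooth \(u_k\) needs only classical comparison, not Trudinger--Wang.
\item The perturbation there should be \(+\eps(\abs{x}^2-4)\), applied to each function in turn. Subtracting \(2\eps I\) from an admissible Hessian can leave \(\Gtwo\), and its effect on \(G\) is not controlled uniformly in \(k\).
\end{itemize}
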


When \(n=2\), equation \eqref{eq:main-pde} is the Monge--Amp\`ere equation and admissibility already implies convexity; the classical interior Hessian estimate goes back to Heinz \cite{Heinz1959}.
In dimension three, Warren--Yuan obtained the estimate for the constant equation by using its special Lagrangian structure \cite{WarrenYuan2009}, and Qiu later treated variable right sides \cite{Qiu2024}.
Shankar--Yuan resolved the unrestricted constant equation in dimension four and, in higher dimensions, proved an estimate under a dynamic semiconvexity condition \cite{ShankarYuan2025}.
Fan extended the result in dimension four, and the conditional result in higher dimensions, to positive \(C^{1,1}\) right sides \cite{Fan2026}.

In arbitrary dimension, a second line of work imposes a condition on the negative eigenvalues.
Guan--Qiu obtained estimates under a lower bound for \(\sigma_3(D^2u)\), which includes convex solutions \cite{GuanQiu2019}.
McGonagle--Song--Yuan treated almost convex solutions by compactness \cite{McGonagleSongYuan2019}, while Shankar--Yuan used a Jacobi inequality and the Legendre--Lewy transform for semiconvex solutions \cite{ShankarYuan2020} and established regularity for almost convex viscosity solutions \cite{ShankarYuan2021}.
Mooney proved strict \(2\)-convexity and regularity for convex viscosity solutions \cite{Mooney2021}; more recently, he obtained an interior Hessian estimate for the constant equation in terms of a \(W^{2,p}\) norm, for every \(p>2\) \cite{Mooney2025}.

Several recent papers are especially close to the present problem.
Chen--Jian--Tu--Zhou proved interior \(C^2\) regularity for convex viscosity solutions with positive Lipschitz right side \cite{ChenJianTuZhou2026}.
Zhou--Zhu obtained the exact \(C^{2,\alpha}\) conclusion for convex admissible viscosity solutions with positive \(C^\alpha\) right side \cite{ZhouZhu2026}.
On the other hand, Li--Wu removed the convexity and dimension restrictions for a constant right-hand side and derived a Hessian estimate on the positive branch \cite{LiWu2026}.
Their argument also indicates a \(C^{1,1}\) extension to a variable right side whose estimate depends on a \(C^1\) norm of the solution.
Taken together, these results treat either H\"older right sides under convexity or the constant equation on the unrestricted positive branch.
Our main result extends the Hessian estimate on the unrestricted positive branch to positive \(C^\alpha\) right-hand sides.

Two obstacles arise in proving this result.
Before the Hessian is bounded, the equation need not be uniformly elliptic, so the standard estimates for concave equations cannot be applied.
A direct second derivative calculation differentiates the right-hand side and is therefore incompatible with \(f\in C^\alpha\).
Smoothing \(f\) does not by itself solve the problem, because the first two derivatives of the smoothed function diverge as the smoothing scale tends to zero.

We now describe the proof.
Assuming that the trace is unbounded, a logarithmic stopping argument, resolvent averaging, and the local maximum principle produce a measurable set \(E\) such that
\[
 \Delta u\ge cM\quad\hbox{on }E,\qquad
 \abs E\ge cR^{n-1}h.
\]
This construction is quantified in \cref{lem:stopping,prop:large-trace-set}.
On a fixed ball \(D\), we construct an adaptive comparison solution \(v\) with the same boundary values as \(u\), solving
\[
 G(D^2v)+\zeta(x)(g-b_\eps)
       \eta\!\left(\frac{\Delta v}{P}\right)=g.
\]
Here \(b_\eps\le g\) is a smooth lower approximation, \(\zeta\) localizes the comparison, \(\eta\) is a trace cutoff, and \(P\) is the threshold; the precise choices and the Dirichlet problem are given in \cref{sec:dirichlet-problem}.
Where \(\zeta=1\), this equation interpolates between the original H\"older data and \(b_\eps\), becoming the smooth equation \(G(D^2v)=b_\eps\) once the trace reaches the threshold.
We then get 
\[
 \int_D\dd(D^2u,D^2v)^2
 \le C\frac{\norm{g-b_\eps}_{L^\infty(D)}^2}{P};
\]
see \cref{lem:reverse-cs,lem:comparison-l2}.
Consequently the large-trace bound transfers from \(u\) to \(v\) on a fixed fraction of \(E\), as stated in \cref{prop:comparison-lower-trace}.

On the resulting comparison component, a gradient estimate and the Pogorelov-type calculation of \cite[Sections~3--4]{ChouWang2001} give the upper trace estimate in \cref{prop:comparison-upper-trace}.
The trace cutoff forces the maximum in the Pogorelov argument to lie in the region where \(v\) solves \(G(D^2v)=b_\eps\), hence the calculation differentiates only the smooth function \(b_\eps\).
The choices
\[
 \eps=M^{-1/8},\qquad P=M^{1-\alpha/8}
\]
from \eqref{eq:parameter-choice} make both the comparison error and the differentiated smooth data become \(o(M)\), giving a contradiction and hence the Hessian is bounded.
Standard estimates for concave uniformly elliptic equations then give the exponent \(\alpha\).
Finally, the Hessian-measure comparison theorem passes the smooth estimate to continuous \(2\)-convex viscosity solutions.

The paper is organized as follows.
Section~\ref{sec:large-trace} turns a large trace at one point into a large-trace set of positive measure.
Section~\ref{sec:comparison} constructs the comparison solution and proves its lower and upper trace estimates.
Section~\ref{sec:contradiction} combines these estimates to prove the Hessian and \(C^{2,\alpha}\) bounds, and then proves Theorem~\ref{thm:viscosity} by smooth approximation.
\section{From a large trace to a set of positive measure}
\label{sec:large-trace}

We use
\[
 \langle A,B\rangle:=\operatorname{tr}(A^T B)=\sum_{i,j}A_{ij}B_{ij}
\]
for the Frobenius inner product on \(\operatorname{Sym}(n)\). Let $\abs{A}=\sqrt{\langle A,A\rangle}$ denote the Frobenius norm of $A$.
We first record the elementary facts about \(\Gtwo\) used throughout this and the next section.

\begin{lemma}\label{lem:newton}
For \(A\in\Gtwo\), we have $(\tr A)\mathrm{I}-A>0$, and 
\begin{equation}
 \abs{A}^2=(\tr A)^2-2\sigma_2(A)<(\tr A)^2.
\label{eq:newton-positive}
\end{equation}
In particular,
\begin{equation}
 \lambda_{\max}(A)<\tr A,\qquad
 \lambda_{\min}(A)>-(n-2)\tr A.
\label{eq:eigenvalue-bounds}
\end{equation}
Moreover
\begin{equation}
 DG(A)=\frac{(\tr A)\mathrm{I}-A}{2G(A)},
\label{eq:linearization}
\end{equation}
and \(G\) is concave and homogeneous of degree one on \(\Gtwo\).
\end{lemma}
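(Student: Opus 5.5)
Everything reduces to eigenvalues, since \(\sigma_1,\sigma_2,\tr,\abs{\cdot}\) and the matrix \((\tr A)\mathrm{I}-A\) are all invariant under orthogonal conjugation. Write \(\lambda_1,\dots,\lambda_n\) for the eigenvalues of \(A\in\Gtwo\). The identity \(\abs{A}^2=\sum\lambda_i^2=(\sum\lambda_i)^2-2\sum_{i<j}\lambda_i\lambda_j=(\tr A)^2-2\sigma_2(A)\) is Newton's identity. The strict inequality in \eqref{eq:newton-positive} then follows from \(\sigma_2(A)>0\).

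For \((\tr A)\mathrm{I}-A>0\), note that its eigenvalues are \(\tr A-\lambda_k=\sum_{i\ne k}\lambda_i=\sigma_1(\lambda|k)\). I would show this is positive directly. Since \(\lambda_k^2\le\abs{A}^2<(\tr A)^2\) and \(\tr A>0\), we get \(\abs{\lambda_k}<\tr A\). Hence \(\lambda_k<\tr A\), so \(\tr A-\lambda_k>0\). This already gives \(\lambda_{\max}<\tr A\). For the lower bound, each of the other \(n-1\) eigenvalues is \(<\tr A\), so \(\lambda_{\min}=\tr A-\sum_{i\ne\min}\lambda_i>\tr A-(n-1)\tr A=-(n-2)\tr A\). (Alternatively one can use \(\abs{\lambda_{\min}}<\tr A\), but the stated bound is what the paper records.)

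For \eqref{eq:linearization}, I would differentiate \(\sigma_2(A)=\tfrac12\bigl((\tr A)^2-\abs{A}^2\bigr)\) in the Frobenius pairing. This gives \(D\sigma_2(A)=(\tr A)\mathrm{I}-A\), and then the chain rule yields \(DG=D\sigma_2/(2\sqrt{\sigma_2})\). Homogeneity of degree one is immediate from \(\sigma_2(tA)=t^2\sigma_2(A)\). Ellipticity follows from \(DG>0\), using the positivity just shown.

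The only nontrivial point is concavity. I would derive it from the reverse Cauchy--Schwarz structure: on \(\{\tr>0\}\), \(\sigma_2(A)=\tfrac12\bigl((\tr A)^2-\abs{A}^2\bigr)\) is, up to the factor \(\tfrac{n-1}{2n}\) in the trace direction, a Lorentzian quadratic form. Splitting \(A=\tfrac{\tr A}{n}\mathrm{I}+A_0\) with \(A_0\) traceless gives \(2\sigma_2=\tfrac{n-1}{n}(\tr A)^2-\abs{A_0}^2\). So \(\sqrt{2\sigma_2}\) is the Minkowski norm \(\sqrt{a t^2-\abs{x}^2}\) on the future cone \(\{t>\abs{x}/\sqrt a\}\), and \(\Gtwo\) is exactly that cone. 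The reverse triangle inequality \(\sqrt{Q(A+B)}\ge\sqrt{Q(A)}+\sqrt{Q(B)}\) for future-timelike vectors, which follows from the reverse Cauchy--Schwarz inequality \(\langle A,B\rangle_Q\ge\sqrt{Q(A)Q(B)}\), then gives superadditivity. Combined with homogeneity, superadditivity yields concavity. I expect this step to be the main obstacle, and it is still routine. It also shows that \(\Gtwo\) is convex, which is needed for concavity to make sense.
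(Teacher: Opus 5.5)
Your proof is correct. The elementary parts coincide with the paper's: Newton's identity, $\abs{\lambda_k}<\tr A$, summing the other $n-1$ eigenvalues to get $\lambda_{\min}>-(n-2)\tr A$, and the chain rule for $DG$. The paper simply asserts that $(\tr A)\mathrm{I}-A>0$ on $\Gtwo$. You derive it from $\abs{\lambda_k}<\tr A$, which is the same inequality the paper then uses for \eqref{eq:eigenvalue-bounds}.

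The genuine difference is concavity. The paper cites G\aa rding's inequality, which comes from the theory of hyperbolic polynomials and gives concavity of $\sigma_k^{1/k}$ on $\Gamma_k$ for every $k$. You instead use the following chain for $k=2$:
\begin{itemize}
\item the form $2\sigma_2(A)=\frac{n-1}{n}(\tr A)^2-\abs{A_0}^2$ is Lorentzian;
\item $\Gtwo$ is exactly its future cone, hence convex;
\item reverse Cauchy--Schwarz gives the reverse triangle inequality, and with homogeneity this gives concavity.
\end{itemize}

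This route is elementary and self-contained, but specific to $k=2$. It is exactly the decomposition $A=T(\mathrm{I}/n+\sqrt{(n-1)/n}\,\Theta)$ and the inequality $\ip{A}{B}\ge G(A)G(B)$ that the paper proves later in \cref{lem:reverse-cs}. So within the paper, concavity could be deduced from that lemma instead of citing G\aa rding. The citation buys brevity and generality. Your route buys a proof from scratch that also supplies the reverse Cauchy--Schwarz inequality needed later.

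One small slip is in your parenthetical aside. The ``alternative'' bound $\abs{\lambda_{\min}}<\tr A$ implies $\lambda_{\min}>-(n-2)\tr A$ only when $n\ge3$. For $n=2$ the stated bound is $\lambda_{\min}>0$. There you need your main summation argument, $\lambda_{\min}=\tr A-\lambda_{\max}>0$.
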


\begin{proof}
On \(\Gtwo\), the matrix \(\partial\sigma_2/\partial A=(\tr A)\mathrm{I}-A\) is positive definite.
The identity in \eqref{eq:newton-positive} implies \(\abs{\lambda_i}<\tr A\), and summing the remaining eigenvalues gives the lower bound in \eqref{eq:eigenvalue-bounds}.
Formula \eqref{eq:linearization} follows by the chain rule.
The concavity of \(\sigma_2^{1/2}\) is the standard G\aa rding inequality.
\end{proof}

In order to obtain the Hessian bound of $u$, it suffices to prove the $L^{\infty}$ bound of $\Delta u$. 
We prove this by contradiction. 
We now begin the large-trace propagation argument, which uses no global \(C^1\) bound.

\begin{lemma}\label{lem:stopping}
Let \(u_j\) be smooth admissible solutions of
\[
 \sigma_2(D^2u_j)=f_j\quad\text{in }B_2,
\]
such that
\begin{equation}
 0<f_0\le f_j\le f_1,\qquad
 [f_j]_{C^\alpha(B_2)}\le C,
 \qquad
 \sup_{B_{1/2}}\Delta u_j\longrightarrow\infty.
\label{eq:blowup-assumptions}
\end{equation}
Then after passing to a subsequence, there exist
$M_j\to \infty$, $R_j\to 0$ and \(x_j\in B_{5/8}\) such that
\begin{equation}
 \Delta u_j(x_j)=M_j,\qquad
 0<\Delta u_j\le2M_j\quad\text{on }B_{R_j}(x_j).
\label{eq:local-trace-cap}
\end{equation}
In particular, we take
\begin{equation}
 R_j=\frac{\ell_j}{F(M_j)},
\label{eq:stopping-scales}
\end{equation}
where \(F(s)=(\log(e+s))^2\), and 
\begin{equation}
\ell_j\to\infty,\qquad \frac{\ell_j}{\log(e+M_j)} \to 0.
\end{equation}
\end{lemma}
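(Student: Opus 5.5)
This is a point-selection (doubling) argument for the function \(\Delta u_j\), which is positive on \(B_2\) since \(D^2u_j\in\Gtwo\).

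\textbf{Weighted quantity.} Fix \(j\) and drop the index. Consider
\[
 Q(x):=\Delta u(x)\,\dist(x,\partial B_{3/4})^{2}\qquad\text{on }B_{3/4},
\]
or better a weight adapted to the logarithmic scale. The cleanest version is a discrete stopping procedure, which I describe now.

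\textbf{Initialisation.} Start at a point \(y_0\in \overline B_{1/2}\) with \(\Delta u(y_0)=m_0:=\sup_{B_{1/2}}\Delta u\), which is large. Set \(r_0=1/8\). Choose the scale function \(\rho(m)=\ell(m)/F(m)\), where \(\ell(m)\) is a slowly growing function with \(\ell\to\infty\) and \(\ell(m)/\log(e+m)\to0\). Concretely I would take \(\ell(m)=\log(e+m)^{1/2}\). Then \(\rho(m)=\log(e+m)^{-3/2}\).

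\textbf{Iteration.} Given \(y_k\) with \(\Delta u(y_k)=m_k\), check whether \(\Delta u\le 2m_k\) on \(B_{\rho(m_k)}(y_k)\).
\begin{itemize}
\item If yes, stop: set \(x=y_k\), \(M=m_k\), \(R=\rho(m_k)\), and \(\ell=\ell(M)\).
\item If not, pick \(y_{k+1}\in B_{\rho(m_k)}(y_k)\) with \(\Delta u(y_{k+1})>2m_k\), and let \(m_{k+1}\) be its exact value.
\end{itemize}
Since \(m_k\ge 2^k m_0\), we have \(\log(e+m_k)\gtrsim k\log 2+\log m_0\). The displacements therefore satisfy
\[
 \sum_k\rho(m_k)\le\sum_k \bigl(k\log2+\log(e+m_0)\bigr)^{-3/2}\lesssim \log(e+m_0)^{-1/2}.
\]
This sum is convergent, which is exactly why the square power in \(F\) is used: an exponent \(>1\) after dividing by \(\ell\). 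It is \(<1/8\) once \(m_0\) is large, so all \(y_k\) stay in \(B_{5/8}\), and all balls \(B_{\rho(m_k)}(y_k)\subset B_{3/4}\subset B_2\).

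\textbf{Termination.} Since \(u\) is smooth on \(B_2\), \(\Delta u\) is bounded on \(\overline B_{3/4}\). Hence \(m_k\) cannot double forever, and the procedure stops at a finite \(k\). The stopping point satisfies \eqref{eq:local-trace-cap}, with \(M_j\ge m_0\to\infty\) and hence \(R_j\to0\). The scale relations \(\ell_j=\ell(M_j)\to\infty\) and \(\ell_j/\log(e+M_j)\to0\) hold by the choice of \(\ell\).

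\textbf{Main point to verify.} The one step needing care is the summability of the displacements so that the \(x_j\) stay in \(B_{5/8}\). This requires \(\ell(m)/F(m)\) to be summable along a geometric sequence \(m_k\sim2^k\). That holds precisely when \(\ell(m)\log(e+m)^{-2}\) decays faster than \(\log(e+m)^{-1}\), which is compatible with \(\ell/\log(e+m)\to0\). Note that positivity of \(\Delta u\), from Lemma~\ref{lem:newton}, is needed for the lower bound \(0<\Delta u\). The hypotheses on \(f_j\) are not used at this stage; they enter later.
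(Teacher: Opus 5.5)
Your proposal is correct and follows essentially the paper's proof: the same discrete doubling iteration with radii $\ell/F(m_k)$, a summed displacement bound along $m_k\ge 2^k m_0$ that keeps the points in $B_{5/8}$, and termination because $\Delta u_j$ is bounded on a compact set. The only difference is that you tie $\ell$ to the running value through the explicit choice $\ell(m)=\log(e+m)^{1/2}$, whereas the paper fixes $\ell_j$ from the initial value $L_j$, which gives the displacement bound $C\ell_j/\log(e+L_j)$ for any $\ell_j$ with $\ell_j/\log(e+L_j)\to0$. Your aside that summability holds ``precisely when'' $\ell/\log(e+m)\to0$ is inaccurate for a varying $\ell$ (for instance, $\ell(m)\approx\log m/\log\log m$ gives a divergent sum), but your concrete choice makes the sum $\lesssim\log(e+m_0)^{-1/2}$, so nothing breaks.
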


\begin{proof}
Choose \(y_j\in B_{1/2}\) and set \(L_j=\Delta u_j(y_j)\to\infty\).
Choose \(\ell_j\to\infty\) such that \(\ell_j / \log(e+ L_j) \to 0\).
Set \(x_{j,0}=y_j\) and \(M_{j,0}=L_j\). 
Inductively, for $k \in \mathbb{N}$, if
\[
 \Delta u_j\le 2M_{j,k} \quad\text{on }B_{\ell_j/F(M_{j,k})}(x_{j,k}),
\]
we terminate the construction. 
Otherwise, we choose
\[
 x_{j,k+1}\in B_{\ell_j/F(M_{j,k})}(x_{j,k})
 \quad\text{such that}\quad 
 M_{j,k+1}:=\Delta u_j(x_{j,k+1})>2M_{j,k}.
\]
Thus, as long as the construction continues, we have $ M_{j,k}\ge 2^kM_{j,0} = 2^k L_j $ and 
\[
 \abs{x_{j,k}-y_j}
 \le \sum_{m=0}^{k-1}\frac{\ell_j}{F(M_{j,m})}
 \le \sum_{m\ge0}\frac{\ell_j}{F(2^mL_j)}
 \le \frac{C\ell_j}{\log(e+L_j)}=o(1),
\]
where the last inequality follows from 
\[
 \sum_{m \geq 0} \frac{1}{F(2^m L_j)}
 \leq C \sum_{m \geq 0} (a_j + m)^{-2}
 \leq C \left( a_j^{-2} + \int_0^{\infty} (a_j + t)^{-2} \, dt \right)
 \leq \frac{C}{a_j},
\]
where \(a_j=\log(e+L_j)\).
Thus, after discarding finitely many indices $j$, every ball used in the construction is contained in $B_{5/8}$.

For fixed \(j\), the construction must terminate: otherwise \(M_{j,k}\ge2^kL_j\) at points in the compact set \(\overline{B}_{5/8}\), contradicting the boundedness of the smooth function \(\Delta u_j\) in \(\overline{B}_{5/8}\). 
Let \(k_j\) be the terminal index and set
\[
 x_j=x_{j,k_j},\qquad M_j=M_{j,k_j},\qquad
 R_j=\frac{\ell_j}{F(M_j)}.
\]
This gives \eqref{eq:local-trace-cap}.
Moreover, \(M_j\ge L_j\to\infty\), and hence
\[
 \frac{\ell_j}{\log(e+M_j)}
 \le \frac{\ell_j}{\log(e+L_j)}\longrightarrow0,
 \qquad R_j\longrightarrow0.
\]
\end{proof}

The main purpose of this section is to prove the following proposition.
\begin{proposition}\label{prop:large-trace-set}
Let \(u_j,x_j,M_j,\ell_j,R_j\) be as in \cref{lem:stopping}.  After passing to a subsequence, there exist
\(h_j>0\) and measurable sets \(E_j\subset B_{R_j}(x_j)\) such that
\[
\Delta u_j\ge cM_j\quad\text{on }E_j,
\]
where
\[
\abs{E_j}\ge cR_j^{n-1}h_j,\qquad
h_j\sim\frac{1}{M_jF(M_j)}.
\]
\end{proposition}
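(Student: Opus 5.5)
The plan is to work at a fixed index \(j\) (dropping the subscript) and use the local cap \(0<\Delta u\le 2M\) on \(B_R(x)\) from \cref{lem:stopping}. By \cref{lem:newton}, it gives \(\abs{D^2u}\le CM\) there. Two consequences follow. The linearized operator \(L=G^{ij}(D^2u)\partial_{ij}\) has, by \eqref{eq:linearization}, eigenvalues between \(c\sqrt{f_0}/M\) and \(CM/\sqrt{f_0}\) on \(B_R(x)\). And \(Du\) is \(CM\)-Lipschitz on \(B_R(x)\). The intended set \(E\) is a thin slab: an \((n-1)\)-dimensional cross-section of size comparable to \(R\), thickened by \(h\sim 1/(MF(M))\) in one direction. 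The logarithmic factor \(F(M)=(\log(e+M))^2\) is meant to absorb the loss coming from the degenerate ellipticity.

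\emph{Step 1 (replacing \(\Delta u\) by a quantity with a one-sided equation).} I cannot differentiate the equation twice, since \(g=\sqrt f\) is only \(C^\alpha\). Instead, for a unit vector \(e\) and \(0<t\le h\), I use second difference quotients. By concavity and homogeneity of \(G\) (\cref{lem:newton}),
\[
 L\bigl(u(\cdot+te)+u(\cdot-te)-2u\bigr)\ge g(\cdot+te)+g(\cdot-te)-2g\ge -Ct^\alpha .
\]
I then average in \(t\) against a resolvent-type weight on \([0,h]\) and sum over an orthonormal frame. This produces a function \(w_h\) which approximates \(\Delta u\) up to an error controlled by the \(CM\)-Lipschitz bound on \(Du\), and satisfies \(Lw_h\ge -Ch^{\alpha}\cdot h^{-2}\) in the averaged sense. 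Taking \(h\le c/(MF(M))\) makes the Lipschitz error at most a small multiple of \(M\). The right-hand-side error must likewise be checked to be negligible after rescaling.

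\emph{Step 2 (from one point to a set).} The nonnegative function \(2M\cdot C-w_h\) is a supersolution on \(B_R(x)\), and at the center it is at most \((2C-1+o(1))M\). I would apply the local maximum principle / weak \(L^\varepsilon\) estimate to it on the thin slab, rescaled in the normal direction. The aim is to show that \(w_h\ge cM\) on a subset of measure \(\ge cR^{n-1}h\). The rescaling in the normal direction is intended to make the slab a unit-size domain, so that the ellipticity ratio enters only polynomially in \(M\). Since \(R=\ell/F(M)\) and \(\ell\ll\log(e+M)\), powers of \(R\) and the logarithmic factors are absorbed into \(F(M)\). Finally, I pass from \(w_h\ge cM\) back to \(\Delta u\ge cM\) on a comparable fraction, by Chebyshev together with the cap \(\Delta u\le 2M\); this defines \(E\).

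\emph{Main obstacle.} The hard step is Step 2. The ellipticity ratio of \(L\) is of order \(M^2\), so the usual Krylov--Safonov constants blow up polynomially in \(M\), whereas the claimed measure only loses the factor \(h\sim 1/(MF(M))\). My first attempt would be to work only in the direction \(e\) of the largest eigenvalue of \(D^2u(x)\), which is \(\ge M/n\) by \eqref{eq:eigenvalue-bounds}. I would then use the one-dimensional convexity of \(t\mapsto u(x+te+y')\) for transverse shifts \(\abs{y'}<R\), controlled by the Lipschitz bound on \(Du\), to get the slab directly, using the local maximum principle only to handle the transverse directions. If that fails, I would pass to a further subsequence so that \(\ell\) grows slowly enough to absorb the constants.
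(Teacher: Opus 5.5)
Your proposal has a genuine gap in each step, and the fallbacks you list do not close them.

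\textbf{Step 1.} The claim that $w_h$ approximates $\Delta u$ with an error controlled by the $CM$-Lipschitz bound on $Du$ is false. The bound $\abs{D^2u}\le CM$ only gives $\abs{w_h}\le CM$. Getting $w_h(x)-\Delta u(x)=o(M)$ would need a modulus of continuity for $D^2u$ at scale $h$, which is exactly what is unknown. From the bounds you invoke, $\Delta u$ could be of size $M$ only on a set much smaller than $h$ around the stopping point, and then every scale-$h$ average there is $o(M)$. No choice of $h>0$ excludes this unless the equation is used across all intermediate scales.

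That is the content of \cref{lem:bessel}. It tracks $N(s)=\max_x\chi_0(x)\,\mathcal R_s\Delta\widetilde u(x)$ for $0<s\le(h^0)^2$. The equation enters through \eqref{eq:sigma2-average}, Cauchy--Schwarz, and an upper bound for $\sigma_2(H_s)$ obtained from $D^2T_s\le CM/R^2$ at the maximum point. Together these give the trace-flatness estimate \eqref{eq:trace-flatness}, hence $s\frac{d}{ds}\log N(s)\ge-Cd_r(1+\abs{\log A(r)})$, which is integrated in $dr/r$. The large average is found at a moved point $z_j$, not at $x_j$, and a direction $e_j$ is then extracted from the second-difference representation of $\mathcal R_{h^2}\Delta$.

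This is where $F(M)$ actually enters. The choices $h^0=1/(MF(M))$ and $R=\ell/F(M)$ give $Mh^0/R=1/\ell\to0$, which makes the accumulated loss $o(1)$. The logarithm is not used to absorb degenerate ellipticity.

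\textbf{Step 2.} Linearizing at $D^2u$ leaves an ellipticity ratio of order $M^2$. Anisotropic rescaling does not fix this, because the eigenvectors of $G^{ij}(D^2u)$ are neither aligned with $e$ nor constant. A further subsequence does not fix it either: the Krylov--Safonov losses are polynomial in $M$, while $\ell\ll\log(e+M)$.

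Your alternative also fails. The map $t\mapsto u(x+te+y')$ is not convex. Admissibility gives $u_{ee}(x)\ge M/n$ only at the one point, and for $n\ge3$ the function $u_{ee}$ may be negative of order $M$ nearby. A Lipschitz bound on $Du$ propagates no lower bound on $u_{ee}$.

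The missing idea is the transverse ellipticity of \cref{lem:transverse-ellipticity}:
\begin{itemize}
\item Set $Z_\mu=U_\mu-u$, where $U_\mu$ averages translates of $u$ along the direction $e$ from \cref{lem:bessel}.
\item Linearize by exact polarization at the averaged matrix $K_\mu=\frac12(D^2U_\mu+D^2u)$, not at $D^2u$.
\item Since $K_\mu$ contains the triangular average $H_T$ with fixed weight, and $e^TH_Te=Q_h^eu\ge cM$ on the component, the trace-normalized average cannot concentrate in $e^\perp$.
\item The identity $\abs{S_i}^2=1-2\sigma_2(S_i)$ for trace-one admissible matrices turns this into $cM\le\tau^T\Slin(K_\mu)\tau\le CM$ for unit $\tau\perp e$.
\item A penalized supremum over the $e$-fibers gives an $(n-1)$-dimensional subsolution. \Cref{prop:local-maximum} applies to it with $M$-independent constants, since the error condition reduces to $\ell M^{-\alpha/2}F(M)^{-\alpha/2}\to0$.
\item The thickness $h$ comes from the $CMh$-Lipschitz bound of $Z_\mu$ along fibers, combined with Fubini.
\end{itemize}

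Finally, the measure conclusion must come from the local maximum principle for a nonnegative subsolution. A weak $L^\varepsilon$ estimate for the supersolution $2CM-w_h$ only bounds its average from above, so it cannot show $w_h\ge cM$ on a large set.
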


To prove \cref{prop:large-trace-set}, we first use the Bessel resolvent to find a direction $e_j$ and a scale \(h_j\) along which the second difference of \(u_j\) is of order \(M_j\) on a small ball.  We then propagate this lower bound in the \(n-1\) transverse directions to produce a set \(E_j\) of the required measure.

\subsection{A large directional second difference}

For a unit vector \(e\), define
\[
 Q_h^eu(x)=\frac{u(x+he)+u(x-he)-2u(x)}{h^2}.
\]

\begin{lemma}\label{lem:bessel}
Under the hypotheses and notation of \cref{lem:stopping}, 
there exist scales \(h_j\sim\frac1{M_jF(M_j)}\), points
\(z_j\in B_{R_j/4}(x_j)\), and directions \(e_j\in\mathbb S^{n-1}\)
such that
\begin{equation}
 Q_{h_j}^{e_j}u_j\ge cM_j
 \quad\text{on }B_{ch_j}(z_j).
\label{eq:directional-difference}
\end{equation}
The constants depend only on the data in \eqref{eq:blowup-assumptions} and on \(B\).
\end{lemma}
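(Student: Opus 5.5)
The plan is to obtain \eqref{eq:directional-difference} by averaging over directions and scales (the ``resolvent averaging''), and to use a one-sided local maximum principle for second differences to stop the average from hiding the mass of $\Delta u_j$ at small scales. Write $u=u_j$, $M=M_j$, $R=R_j$ and $g=\sqrt{f_j}$.

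\emph{Step 1 (second differences are subsolutions).} Concavity of $G$ (\cref{lem:newton}) gives, at any $x$,
\[
G(D^2u(x\pm he))\le G(D^2u(x))+DG(D^2u(x)):\bigl(D^2u(x\pm he)-D^2u(x)\bigr).
\]
Summing the two signs and using $g\in C^\alpha$, we get
\[
a_{ij}(x)\,\partial_{ij}Q_h^eu(x)\ge -C h^{\alpha-2},\qquad a_{ij}=\partial_{ij}G(D^2u(x)).
\]
On $B_R(x_j)$ we have $0<\Delta u\le 2M$, so \eqref{eq:linearization} and \eqref{eq:eigenvalue-bounds} give $|D^2u|\le CM$. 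Hence $(a_{ij})$ has eigenvalues between $cM^{-1}$ and $CM$, so the ellipticity ratio is at most $CM^2$, with the precise bounds coming from $\sigma_2\ge f_0$. The local maximum principle (Krylov--Safonov form, with its polynomial dependence on the ellipticity ratio) applied to $Q_h^eu$ on balls of radius $r$ then bounds $\sup Q_h^eu$ by a multiple of an $L^p$ average plus $r^2h^{\alpha-2}$ times an ellipticity factor. The crude bound $Q_h^eu\le CM$ on $B_{R/2}(x_j)$ already follows from $|D^2u|\le CM$. The role of the maximum principle is that an upper bound on \emph{averages} of $Q^e_h u$ upgrades to a pointwise upper bound.

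\emph{Step 2 (Bessel/resolvent representation).} For each $h$, the directional average $\fint_{\Sn}Q_h^eu(x)\,de$ equals $\frac{2}{h^2}\bigl(\fint_{\partial B_h(x)}u-u(x)\bigr)$. By Green's formula this is $\int K_h(x-y)\Delta u(y)\,dy$ with a positive radial kernel $K_h$ of unit mass supported in $B_h$. I would integrate this identity in $h$ against a weight $\mu e^{-\mu h^2}$-type density with $\mu^{-1/2}=h_*\sim (MF(M))^{-1}$. The resulting kernel is a Bessel-potential-type resolvent kernel, which concentrates at scale $h_*$ but has a small-scale tail. The pointwise value $\Delta u(x_j)=M$ is the $h\to0$ limit of these averages. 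Splitting the $h$-integral into $h\le\delta h_*$ and $h\ge\delta h_*$, the small-$h$ part is controlled by the uniform bound $Q^e_hu\le CM$ times the (small) weight. This should show that for some $h\in[\delta h_*,h_*]$ and some direction $e$, the quantity $Q_h^eu(x)$ is at least $cM$ at a point $x$ near $x_j$. The logarithmic factor $F(M)$ and the radius $R=\ell/F(M)$ are what make $R/h\to\infty$, so that the whole averaging stays inside $B_{R/4}(x_j)$. The cost $h^{\alpha-2}$ from Step 1, multiplied by powers of $M$, should be absorbed because $h$ is only polynomially small in $M$ and the error enters with radius squared.

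\emph{Step 3 (from a point to a ball).} Having $Q^{e}_{h}u(z)\ge cM$ at one point $z\in B_{R/4}(x_j)$, I would spread this to $B_{ch}(z)$ using the Lipschitz bound on $Du$. Since $|D^2u|\le CM$, the function $Q_h^eu$ has $x$-gradient at most $CM/h$. Hence $Q_h^eu\ge cM/2$ on $B_{c'h}(z)$, which is \eqref{eq:directional-difference} with $h_j=h$, $z_j=z$, $e_j=e$.

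\emph{Main obstacle.} The delicate step is Step 2: the pointwise datum $\Delta u(x_j)=M$ must be converted into a second difference at the \emph{fixed} scale $h_*$ without any modulus of continuity of $D^2u$. The only available tools are the upper cap $\Delta u\le 2M$ and the subsolution property from Step 1, whose constants degrade with the ellipticity ratio. I would first try choosing the resolvent weight so that the small-scale part has mass $\ll 1$. If that alone is insufficient, I would use the local maximum principle to show that $Q^e_hu$ cannot be large on a set of small measure without being large on a ball, and then choose $h_*$ exactly at the scale $1/(MF(M))$ where this error balances.
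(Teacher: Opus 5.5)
\textbf{There is a genuine gap in Step 2, which is the whole content of the lemma.}

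Knowing $\Delta u(x_j)=M$ only tells you that the spherical or resolvent averages of $\Delta u$ at $x_j$ tend to $M$ as the scale tends to zero, for this fixed $u$. It says nothing about the average at the prescribed scale $h_*\sim(MF(M))^{-1}$, because no modulus of continuity of $D^2u$ is available; that is what is being proved.

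Splitting the $h$-integral and bounding the small-scale part by $CM$ times a small weight is an \emph{upper} bound on one piece of the average. It shows the total is close to its large-scale part, but not that either is $\ge cM$. Nothing in Steps 1--2 excludes $\Delta u$ having size $M$ only on a ball of radius $\ll h_*$ around $x_j$. In that case every $Q^e_hu(x)$ with $h\sim h_*$ and $x$ near $x_j$ could be $o(M)$.

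Your fallback does not repair this. The local maximum principle for the subsolution $Q^e_hu$ turns a large \emph{pointwise} value of $Q^e_hu$ into a large average. So it presupposes that $Q^e_hu$ is already large somewhere at scale $h\sim h_*$, which is the conclusion you want. Moreover, its constants depend on the ellipticity ratio, here of order $M^2$, and degenerate as $M\to\infty$. They would swamp a lower bound of size $cM$. Your Step 1 inequality is correct but plays no role in the needed direction.

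\textbf{The missing idea is a scale-monotonicity argument that uses the $\sigma_2$ structure.}

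The paper tracks $N(s)=\max_x\chi_0(x)T_s(x)$, where $T_s=\mathcal R_s\Delta\widetilde u$ and $s=r^2\in(0,(h^0)^2]$.
\begin{itemize}
\item At a maximizer $x_s$ one has $D^2T_s(x_s)\le CM/R^2$, and $H_s-D^2\widetilde u=s\,D^2T_s$.
\item Polarizing against $\Slin(\frac{H_s+D^2u(x_s)}{2})>0$ then gives $\sigma_2(H_s)\le f(x_s)+CM^2r^2/R^2+CM^2\epsilon_r$.
\item Combine this with $\sigma_2(\int A\,d\nu)\ge\tr(\int A\,d\nu)\int\sigma_2(A)/\tr A\,d\nu$, Cauchy--Schwarz, and the $C^\alpha$ bound on $f$. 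Together they force $\Delta\widetilde u$ to be $L^1$-close to $T_s(x_s)$ against the kernel $G_s(x_s-\cdot)$.
\item Via $s\partial_s\mathcal R_s=\mathcal R_s^2-\mathcal R_s$ and the kernel-ratio bound, this yields $s\frac{d}{ds}\log N\ge -Cd_r(1+\abs{\log A(r)})$.
\end{itemize}

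The choices $R=\ell/F(M)$ and $h^0=(MF(M))^{-1}$ are exactly what make the $dr/r$-integral of the right-hand side $o(1)$. For instance, the term $Mr/R$ is at most $1/\ell$, and the logarithm is integrable. Hence $N((h^0)^2)\ge(1-o(1))M$, attained at the maximizer $z_j$, which need not be $x_j$.

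Only after this does the representation $\mathcal R_{h^2}\Delta w=\int Q_yw\,dW_h$ yield a direction and a scale comparable to $h^0$. Your Step 3 then finishes as in the paper. Without this step, or some substitute that uses the equation to control how the average changes with scale, your Steps 1--2 do not prove the lemma.
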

The main estimate in the proof of this lemma is a lower bound of order \(M_j\) for the Bessel resolvent average of \(\Delta u_j\) at scale $h_j$.
At that scale, the averaged trace is an average of
the second differences \(Q_yu_j\).  Hence one of these second differences
is of order \(M_j\).
The local Hessian bound implies that the same lower bound holds throughout a ball of radius comparable to \(h_j\).

We first record the properties of the Bessel resolvent \((I-s\Delta)^{-1}\) used in the proof. 

\begin{lemma}\label{lem:bessel-resolvent}
For \(s>0\), let
\[
\mathcal R_s=(I-s\Delta)^{-1}
\]
on \(\mathbb R^n\), and let \(G_s\) be its convolution kernel of total mass one.  Then $G_s>0$ and
\[
\mathcal R_s\phi(x)
=\int_{\mathbb R^n}G_s(x-y)\phi(y)\,dy.
\]
Moreover,
\[
G_s(y)
=\int_0^\infty
e^{-t}(4\pi st)^{-n/2}
\exp\left(-\frac{\abs{y}^2}{4st}\right)\,dt,
\]
and
\[
(G_s*G_s)(y)
=\int_0^\infty
te^{-t}(4\pi st)^{-n/2}
\exp\left(-\frac{\abs{y}^2}{4st}\right)\,dt.
\]

Let $r=\sqrt{s}$. There exist constants \(C,c>0\) and an integer \(N\), depending only on \(n\), such that for every \(d>0\),
\[
\int_{\abs y\ge d}G_s(y)\,dy
+
\int_{\abs y\ge d}(G_s*G_s)(y)\,dy
\le
C(1+d/r)^N e^{-cd/r}.
\]
We also have
\begin{equation}
\frac{(G_s*G_s)(y)}{G_s(y)}
\le C(1+\abs y/r).
\label{eq:bessel-kernel-ratio-bound}
\end{equation}

Finally, if \(w\in C_c^2(\mathbb R^n)\), then
\[
\mathcal R_{h^2}\Delta w(x)
=\int_{\mathbb R^n}Q_yw(x)\,dW_h(y),
\]
where
\[
Q_yw(x)
=\frac{w(x+y)+w(x-y)-2w(x)}{\abs y^2}
\]
and
\[
dW_h(y)
=\frac{\abs y^2}{2h^2}G_{h^2}(y)\,dy.
\]
The measure \(W_h\) has total mass \(n\).  Moreover, we have
\[
\lim_{\delta\downarrow0}
W_h({\abs y<\delta h})=0,
\qquad
\lim_{L\to\infty}
W_h({\abs y>Lh})=0,
\]
uniformly in \(h>0\).
\end{lemma}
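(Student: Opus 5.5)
The plan is to derive all claims from the subordination (heat-semigroup) representation of the resolvent. For \(s>0\) we use the Laplace-transform identity
\[
(I-s\Delta)^{-1}=\int_0^\infty e^{-t}e^{st\Delta}\,dt .
\]
Since \(e^{\tau\Delta}\) has the Gaussian kernel \((4\pi\tau)^{-n/2}e^{-\abs y^2/4\tau}\), substituting \(\tau=st\) gives the formula for \(G_s\). This kernel is positive and has mass \(\int_0^\infty e^{-t}\,dt=1\). Applying the same identity to \((I-s\Delta)^{-2}=\int_0^\infty te^{-t}e^{st\Delta}\,dt\), or equivalently using the semigroup property to compute \(G_s*G_s\), gives the second formula. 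One can also check everything on the Fourier side, where the multiplier is \((1+s\abs\xi^2)^{-1}\).

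For the tail bounds we scale \(y=r z\) with \(r=\sqrt s\). This reduces to \(s=1\), and the bounds must be uniform in \(s\) because they are stated in terms of \(d/r\). For \(s=1\), the mass of \(G_1\) outside \(\abs z\ge D\) equals \(\int_0^\infty e^{-t}\,\mathbb P(\abs{Z}\ge D/\sqrt{2t})\,dt\), where \(Z\) is a standard Gaussian in \(\mathbb R^n\). We split the integral at \(t=D\). For \(t\ge D\) the contribution is at most \(e^{-D}\). For \(t\le D\) the Gaussian tail is at most \(C(1+D)^{n}e^{-D^2/(4t)}\), and \(e^{-t-D^2/4t}\le e^{-D}\) by AM–GM. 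Integrating over \(t\le D\) costs a polynomial factor, which gives \(C(1+D)^Ne^{-cD}\). The extra factor \(t\) in \(G_1*G_1\) is handled the same way.

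For the ratio bound \eqref{eq:bessel-kernel-ratio-bound} we again set \(s=1\) and write \(\rho=\abs y\). Both integrands have the form \(\phi_\rho(t)=e^{-t}t^{-n/2}e^{-\rho^2/4t}\), and we must show \(\int t\phi_\rho\le C(1+\rho)\int\phi_\rho\). Note that \(\phi_\rho\) is log-concave in \(1/t\) with its mode near \(t\sim\rho/2+n/2\). For \(\rho\le1\), the mass concentrates at \(t\lesssim1\) when \(n\ge3\), with mild care at the singularity at \(t=0\); it remains bounded when \(n\le2\). In either case \(\int t\phi\lesssim\int\phi\) directly: the region \(t\ge1\) contributes to both integrals comparably up to constants, while on \(t\le1\) we have \(t\le1\). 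For \(\rho\ge1\), a Laplace-method argument gives \(\int t\phi\le C\rho\int\phi\). Concretely, we compare both integrals with the window \(t\in[\rho/4,\rho]\), where \(\phi\) is within constant factors of its maximum \(\approx e^{-\rho}\rho^{-n/2}\), and show that contributions from \(t>2\rho\) are exponentially smaller, like \(e^{-2\rho}\) times polynomials. I expect this to be the most delicate estimate, though it is still elementary.

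For the representation of \(\mathcal R_{h^2}\Delta w\) we use that \(G_{h^2}\) is radial, so \(\mathcal R_{h^2}\Delta w=\int G_{h^2}(y)\Delta w(x-y)\,dy\). Integrating by parts twice moves the Laplacian onto the kernel in the distributional sense, but it is cleaner to use the identity
\[
\int \psi(\abs y)\,\bigl(w(x+y)+w(x-y)-2w(x)\bigr)\,dy=\int \Psi(\abs y)\,\Delta w(x+y)\,dy ,
\]
valid for radial \(\psi\) and the radial function \(\Psi\) solving \(\Delta\Psi=2\psi\)-type relations. Choosing \(\psi(y)=G_{h^2}(y)/(2h^2)\), the defining equation \(G-h^2\Delta G=\delta\) gives \(\Delta G_{h^2}=(G_{h^2}-\delta)/h^2\). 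Hence
\[
\int \frac{G_{h^2}(y)}{2h^2}\bigl(w(x+y)+w(x-y)-2w(x)\bigr)\,dy
=\frac1{h^2}\bigl(\mathcal R_{h^2}w-w\bigr)(x)=\mathcal R_{h^2}\Delta w(x),
\]
where the first equality uses \(G\) even and the second is the resolvent identity \((\mathcal R-I)/h^2=\mathcal R\Delta\). This is exactly \(\int Q_yw\,dW_h\). Testing with \(w=\abs x^2\) locally, or computing \(\int \abs y^2G_{h^2}\,dy=2nh^2\) from the Gaussian second moment \(2n\,st\) and \(\int te^{-t}\,dt=1\), shows that the total mass of \(W_h\) is \(n\). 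Finally, by scaling, \(W_h(\{\abs y<\delta h\})\) and \(W_h(\{\abs y>Lh\})\) are independent of \(h\). Both tend to \(0\) by dominated convergence, since \(\abs z^2G_1(z)\) is integrable: near \(0\), \(G_1\) has at most a \(\abs z^{2-n}\) (or logarithmic) singularity, and the tails decay exponentially by the bound proved above.
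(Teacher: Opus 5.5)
Your proposal is correct, and apart from one step it follows the paper's proof. The shared parts are: the subordination identity; splitting the tail integral at \(t=d/r\) with AM--GM; the second-difference representation; the second-moment computation of the mass of \(W_h\); and the scaling argument for the two limits. For the representation, the chain you display (\(G_{h^2}\) even with unit mass, then \(\mathcal R_{h^2}-I=h^2\mathcal R_{h^2}\Delta\)) is exactly the paper's, and the detour through a radial \(\Psi\) is unnecessary.

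The genuine difference is the ratio bound \eqref{eq:bessel-kernel-ratio-bound}. The paper writes \(G_s\) and \(G_s*G_s\) in terms of \(K_{1-n/2}\) and \(K_{2-n/2}\), which gives the exact ratio \(\tfrac z2K_{2-n/2}(z)/K_{1-n/2}(z)\). It then uses small- and large-argument Bessel asymptotics, with a case split in \(n\). You instead read the ratio as the mean of \(t\) under the density proportional to \(\phi_\rho\), and use \(\int_{t\le T}t\phi_\rho\le T\int\phi_\rho\):
\begin{itemize}
\item for \(\rho\le1\), take \(T=1\); on \(t\ge1\) we have \(e^{-\rho^2/4t}\ge e^{-1/4}\), so the pieces of the two integrals there are comparable;
\item for \(\rho\ge1\), take \(T=2\rho\); the remaining tail is \(O((1+\rho)^Ce^{-2\rho})\).
\end{itemize}
This is more elementary and uniform in \(n\), with no special-function input. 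The paper's route gives the exact ratio and its precise behaviour as \(z\to0\), which the lemma does not need.

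Two details need fixing, though neither affects the outcome.
\begin{itemize}
\item On \([\rho/4,\rho]\), \(\phi_\rho\) is not within a constant factor of its maximum. Indeed \(t+\rho^2/(4t)\) equals \(5\rho/4\) at both endpoints but \(\rho\) at \(t=\rho/2\), so the true comparability window has width of order \(\sqrt\rho\). You only need \(\int\phi_\rho\ge c\rho^{1-n/2}e^{-5\rho/4}\), which that interval does give. This still beats the \(e^{-2\rho}\) tail by a factor \(e^{-3\rho/4}\).
\item The Gaussian tail bound \(C(1+D)^ne^{-D^2/4t}\) is not uniform as \(t\to0\). Replace it by \(Ce^{-D^2/(8t)}\); AM--GM then gives \(e^{-t-D^2/(8t)}\le e^{-D/\sqrt2}\) as before.
\end{itemize}
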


\begin{proof}
For every \(\lambda\ge0\),
\[
\frac1{1+\lambda}=\int_0^\infty e^{-t}e^{-t\lambda}\,dt.
\]
Applying this identity to \(\lambda=-s\Delta\), we obtain
\[
\mathcal R_s
=\int_0^\infty e^{-t}e^{st\Delta}\,dt.
\]
The heat operator \(e^{st\Delta}\) has kernel
\[
(4\pi st)^{-n/2}
\exp\left(-\frac{\abs y^2}{4st}\right).
\]
This proves the formula for \(G_s\), as well as its positivity and total
mass.
Similarly \(G_s*G_s\), which is the kernel of \(\mathcal R_s^2\), has the
stated integral formula.

We prove the decay estimate.  The Gaussian estimate
\[
\int_{\abs y\ge d}
(4\pi r^2t)^{-n/2}
\exp\left(-\frac{\abs y^2}{4r^2t}\right)\,dy
\le
C\left(1+\frac{d}{r\sqrt t}\right)^N
\exp\left(-\frac{cd^2}{r^2t}\right)
\]
holds for every \(t>0\).  Put \(a=d/r\).  On \(0<t\le a\),
\(
t+\frac{ca^2}{t}\ge c_1a.
\)  Applying these inequalities to the
integral formulas for \(G_s\) and \(G_s*G_s\), we obtain
\[
\int_{\abs y\ge d}G_s(y)\,dy
+
\int_{\abs y\ge d}(G_s*G_s)(y)\,dy
\le C(1+a)^Ne^{-c_2a}.
\]
This is the required estimate.

Differentiating
\[
(I-s\Delta)\mathcal R_s=I
\]
with respect to \(s\), we obtain
\[
\partial_s\mathcal R_s
=\mathcal R_s\Delta\mathcal R_s.
\]
Since \(s\Delta=I-(I-s\Delta)\), it follows that
\[
s\partial_s\mathcal R_s
=\mathcal R_s^2-\mathcal R_s.
\]
For \(z>0\) and \(\mu\in\mathbb R\), let \(K_\mu(z)\) denote the
modified Bessel function of the second kind, defined by
\[
 K_\mu(z)
 =
 \frac12\left(\frac z2\right)^{-\mu}
 \int_0^\infty
 t^{\mu-1}
 \exp\left(-t-\frac{z^2}{4t}\right)\,dt.
\]
Let \(y\ne0\), set \(z=\abs y/r\).
Applying this identity with \(\mu=1-n/2\) to \(G_s\), and with
\(\mu=2-n/2\) to \(G_s*G_s\), gives
\begin{equation}
\frac{(G_s*G_s)(y)}{G_s(y)}
=\frac{z}{2}
\frac{K_{2-n/2}(z)}{K_{1-n/2}(z)},
\label{eq:bessel-kernel-ratio-formula}
\end{equation}
For \(0<z\le1\), we use
\[
K_0(z)\sim\abs{\log z},
\qquad
K_\nu(z)\sim
2^{\abs\nu-1}\Gamma(\abs\nu)z^{-\abs\nu}
\quad\text{if }\nu\ne0,
\]
together with \(K_{-\nu}=K_\nu\).  If \(n=2\), the right-hand side of
\eqref{eq:bessel-kernel-ratio-formula} is bounded by
\(C/\abs{\log z}\).  If \(n=3\), it equals \(z/2\).  If \(n=4\), it is
bounded by \(Cz^2\abs{\log z}\).  If \(n\ge5\), it is bounded by \(Cz^2\).
For \(z\ge1\),
\[
K_\nu(z)
=\left(\frac{\pi}{2z}\right)^{1/2}
e^{-z}\bigl(1+O(z^{-1})\bigr),
\]
so the quotient in \eqref{eq:bessel-kernel-ratio-formula} is bounded by \(Cz\).
This proves \eqref{eq:bessel-kernel-ratio-bound}.

It remains to prove the formula involving \(Q_yw\).  By symmetry of
\(G_{h^2}\),
\[
\begin{aligned}
\int_{\mathbb R^n}Q_yw(x)\,dW_h(y)
&=
\frac1{2h^2}
\int_{\mathbb R^n}
\bigl(w(x+y)+w(x-y)-2w(x)\bigr)G_{h^2}(y)\,dy\\
&=
\frac{\mathcal R_{h^2}w(x)-w(x)}{h^2}.
\end{aligned}
\]
Since
\(
\mathcal R_{h^2}-I
=h^2\mathcal R_{h^2}\Delta,
\)
the last expression equals
\(\mathcal R_{h^2}\Delta w(x)\).
By direct computation, \(W_h(\mathbb R^n)=n\).  Since
\(\abs z^2G_1(z)\) is integrable on \(\mathbb R^n\), its integral over
\(\abs z<\delta\) tends to zero as \(\delta\downarrow0\), and its
integral over \(\abs z>L\) tends to zero as \(L\to\infty\).
\end{proof}

\begin{proof}[Proof of \cref{lem:bessel}]
Subtract the affine tangent function of \(u_j\) at \(x_j\).
From \cref{lem:newton} and \eqref{eq:local-trace-cap},
\[
 \abs{D^2u_j}\le2M_j,\qquad
 \abs{Du_j}\le CM_jR_j,\qquad
 \abs{u_j}\le CM_jR_j^2
\]
on $B_{R_j}(x_j)$.
Choose a cutoff equal to one on \(B_{3R_j/4}(x_j)\), supported in \(B_{R_j}(x_j)\), and extend the product \(\widetilde u_j\) by zero.
Then
\[
 \abs{D^2\widetilde u_j}\le CM_j,\quad
 \abs{D\widetilde u_j}\le CM_jR_j,\quad
 \abs{\widetilde u_j}\le CM_jR_j^2.
\]

Set
\(
h_j^0=\frac1{M_jF(M_j)}.
\)
For \(s>0\), let \(\mathcal R_s\) and \(G_s\) be as in
\cref{lem:bessel-resolvent}, and define
\[
H_s=\mathcal R_sD^2\widetilde u_j,
\qquad
T_s=\tr H_s=\mathcal R_s\Delta\widetilde u_j.
\]
We shall consider \(s=r^2\) with \(0<r\le h_j^0\).
Let
\(
I_j=B_{3R_j/4}(x_j).
\)
For \(x\in B_{R_j/4}(x_j)\), define
\[
\beta_{r,x}
=\int_{I_j}G_{r^2}(x-y)\,dy.
\]
The decay estimate in \cref{lem:bessel-resolvent} gives
\[
0\le1-\beta_{r,x}
\le
\epsilon_r
:=
C(1+R_j/r)^N e^{-cR_j/r}.
\]
Note that 
\(
R_j/r
\ge R_j/h_j^0
=\ell_jM_j.
\)
Consequently,
\(
M_j^2\epsilon_r=o(1)
\)
uniformly for \(0<r\le h_j^0\).
Define a probability measure on \(I_j\) by
\[
 d\bar\nu_{r,x}(y)
 =
 \frac{\mathbf 1_{I_j}(y)G_{r^2}(x-y)}
      {\beta_{r,x}}\,dy,
\]
and set
\[
\bar H_{r,x}
=\int_{I_j}D^2u_j(y)\,d\bar\nu_{r,x}(y),
\qquad
\bar T_{r,x}
=\tr\bar H_{r,x}.
\]
On \(I_j\), we have \(\widetilde u_j=u_j\).  Therefore
\(
 H_{r^2}(x)
 =
 \beta_{r,x}\bar H_{r,x}+E_{r,x},
\)
where
\[
 E_{r,x}
 =
 \int_{\mathbb R^n\setminus I_j}
 D^2\widetilde u_j(y)G_{r^2}(x-y)\,dy.
\]
The bound for \(D^2\widetilde u_j\) implies
\[
\abs{E_{r,x}}\le CM_j\epsilon_r.
\]
Since \(\abs{\bar H_{r,x}}\le2M_j\), it follows that
\[
\abs{H_{r^2}(x)-\bar H_{r,x}}
\le CM_j\epsilon_r.
\]
Since \(G=\sqrt{\sigma_2}\) is concave on
\(\Gtwo\),
\[
 G(\bar H_{r,x})
 \ge
 \int_{I_j}G(D^2u_j(y))\,d\bar\nu_{r,x}(y)
 \ge\sqrt{f_0}.
\]
Thus \(\bar H_{r,x}\in\Gtwo\).
Moreover,
\[
\abs{
\sigma_2(H_{r^2}(x))
-\sigma_2(\bar H_{r,x})
}
\le CM_j^2\epsilon_r=o(1).
\]
Hence \(H_{r^2}(x)\in\Gtwo\) for all
\(x\in B_{R_j/4}(x_j)\), all \(0<r\le h_j^0\), and all sufficiently
large \(j\).

Choose \(\chi_0\in C_c^\infty(B_{R_j/4}(x_j))\) such that
\(0\le\chi_0\le1\), \(\chi_0(x_j)=1\),
and, wherever \(\chi_0>0\),
\[
\frac{\abs{D\chi_0}^2}{\chi_0}
+\abs{D^2\chi_0}
\le\frac{C}{R_j^2}.
\]
Define
\(
N(s)=\max_x\chi_0(x)T_s(x).
\)
Since \(\widetilde u_j=u_j\) near \(x_j\),
\(
N(0)\ge\Delta u_j(x_j)=M_j.
\)
Suppose that \(N(s)\ge M_j/2\).  Choose \(x_s\) such that
\(
N(s)=\chi_0(x_s)T_s(x_s).
\)
Since \(\abs{T_s}\le CM_j\), we have
\(
T_s(x_s)\sim M_j,
\)
and
\(\chi_0(x_s)\ge c.
\)
At \(x_s\),
\[
D(\chi_0T_s)=0,\qquad
D^2(\chi_0T_s)\le0.
\]
The first identity gives
\[
DT_s=-\frac{T_s}{\chi_0}D\chi_0.
\]
Substituting this into the second inequality, we obtain
\[
D^2T_s(x_s)
\le
C\frac{M_j}{R_j^2}\mathrm I.
\]

Let $\Slin(A):=D\sigma_2(A)=(\tr A)\mathrm{I}-A$.
Since
\(
\Slin(H_s+D^2u_j(x_s)
)>0,
\)
we have
\[
\begin{aligned}
\sigma_2(H_s)-f_j(x_s)
&=
\left\langle \Slin\left(
\frac{H_s+D^2u_j(x_s)}2
\right),
\bigl(H_s-D^2u_j(x_s)\bigr) \right\rangle \\
&\le
Cs\frac{M_j}{R_j^2}
\tr\Slin\left(
\frac{H_s+D^2u_j(x_s)}2
\right)\\
&\le C\frac{M_j^2r^2}{R_j^2}.
\end{aligned}
\]
We may enlarge the right-hand side by the positive quantity
\(CM_j^2\epsilon_r\).  Thus
\begin{equation}
\sigma_2(H_s)
\le
f_j(x_s)
+C\frac{M_j^2r^2}{R_j^2}
+CM_j^2\epsilon_r.
\label{eq:averaged-hessian-upper}
\end{equation}

We next compare the individual values of \(\Delta u_j\) with their
average.  For any probability measure \(\nu\) and any
\(\Gtwo\)-valued matrix function \(A\),
\begin{equation}
\sigma_2\left(\int A\,d\nu\right)
\ge
\tr\left(\int A\,d\nu\right)
\int\frac{\sigma_2(A)}{\tr A}\,d\nu.
\label{eq:sigma2-average}
\end{equation}
In the following formulas, write
\(
\bar H_r=\bar H_{r,x_s},
\bar T_r=\bar T_{r,x_s},
d\bar\nu_r=d\bar\nu_{r,x_s}.
\)
Let $g_j=\sqrt{f_j}$ and
\(
m=\int_{I_j}g_j\,d\bar\nu_r.
\)
Since
\[
\int_{\mathbb R^n}
\abs{x_s-y}^{\alpha}G_{r^2}(x_s-y)\,dy
=Cr^\alpha
\]
and \(\beta_{r,x_s}=1+o(1)\), the \(C^\alpha\) bound for \(g_j,f_j\)
implies
\begin{equation}
\abs{m-g_j(x_s)}\le Cr^\alpha,\qquad\abs{m^2-f_j(x_s)}\le Cr^\alpha.
\label{eq:right-side-average}
\end{equation}

Applying \eqref{eq:sigma2-average} to \(A=D^2u_j\) and
\(\nu=\bar\nu_r\), we obtain
\[
\bar T_r\int_{I_j}\frac{f_j}{\Delta u_j}\,d\bar\nu_r
\le\sigma_2(\bar H_r).
\]
On the other hand, Cauchy--Schwarz inequality gives
\[
\left(\int_{I_j}g_j\,d\bar\nu_r\right)^2
\le
\left(\int_{I_j}\Delta u_j\,d\bar\nu_r\right)
\left(\int_{I_j}\frac{f_j}{\Delta u_j}\,d\bar\nu_r\right).
\]
Thus
\[
0\le
\bar T_r\int_{I_j}\frac{f_j}{\Delta u_j}\,d\bar\nu_r-m^2.
\]
Since
\[
\abs{H_s-\bar H_r}\le CM_j\epsilon_r,
\]
we also have
\[
\sigma_2(\bar H_r)
\le\sigma_2(H_s)+CM_j^2\epsilon_r.
\]
Combining this inequality with \eqref{eq:sigma2-average} and \eqref{eq:right-side-average}, we obtain
\begin{equation}
0\le
\bar T_r\int_{I_j}\frac{f_j}{\Delta u_j}\,d\bar\nu_r
-m^2
\le
C\left(
r^\alpha+\frac{M_j^2r^2}{R_j^2}
+M_j^2\epsilon_r
\right).
\label{eq:trace-concentration-error}
\end{equation}
To convert this to trace concentration, set
\[
 d\nu=\frac{g_j}{m}\,d\bar\nu_r,\qquad
 X=\frac{m\Delta u_j}{g_j\bar T_r}.
\]
Then \(\int X\,d\nu=1\), and
\[
 \int\frac{(X-1)^2}{X}\,d\nu=\int X^{-1}\,d\nu-1.
\]
By \eqref{eq:trace-concentration-error} and Cauchy--Schwarz inequality,
\[
\int\abs{X-1}\,d\nu
\le
C\left(
r^{\alpha/2}
+\frac{M_jr}{R_j}
+M_j\epsilon_r^{1/2}
\right).
\]
With \eqref{eq:right-side-average}, we conclude that
\[
\int_{I_j}
\left|
\frac{\Delta u_j}{\bar T_r}-1
\right|d\bar\nu_r
\le
C\left(
r^{\alpha/2}
+\frac{M_jr}{R_j}
+M_j\epsilon_r^{1/2}
\right).
\]
Finally,
\[
\abs{T_s(x_s)-\bar T_r}\le CM_j\epsilon_r.
\]
The integral over \(\mathbb R^n\setminus I_j\) is also bounded by
\(C\epsilon_r\), because
\(\abs{\Delta\widetilde u_j}\le CM_j\) and
\(T_s(x_s)\sim M_j\).  We therefore obtain
\begin{equation}
\int_{\mathbb R^n}
\left|
\frac{\Delta\widetilde u_j(y)}{T_s(x_s)}-1
\right|
G_s(x_s-y)\,dy
\le Cd_r,
\label{eq:trace-flatness}
\end{equation}
where
\[
A(r)
:=
r^\alpha
+\frac{M_j^2r^2}{R_j^2}
+M_j^2\epsilon_r,
\qquad
d_r
:=
r^{\alpha/2}
+\frac{M_jr}{R_j}
+M_j\epsilon_r^{1/2}.
\]
In particular,
\[
A(r)\le d_r^2\le 3A(r).
\]

Define
\[
a_s(z):=
\frac{(G_s*G_s)(z)}{G_s(z)}.
\]
Then we have
\begin{equation}
\begin{aligned}
\frac{s\partial_sT_s(x_s)}{T_s(x_s)}
&=
\frac{\mathcal R_s^2\Delta\widetilde u_j(x_s)}
     {T_s(x_s)}
-1\\
&=
\int_{\mathbb R^n}
a_s(x_s-y)
\left(
\frac{\Delta\widetilde u_j(y)}{T_s(x_s)}-1
\right)
G_s(x_s-y)\,dy.
\end{aligned}
\label{eq:resolvent-log-derivative}
\end{equation}

The function \(N\) is locally Lipschitz on \((0,\infty)\).
At every point \(s\) where \(N\) is differentiable, the maximum
property of \(x_s\) gives
\[
\frac{d}{ds}N(s)
=
\chi_0(x_s)\partial_sT_s(x_s).
\]
Consequently,
\[
s\frac{d}{ds}\log N(s)
=
\frac{s\partial_sT_s(x_s)}{T_s(x_s)}
\]
for almost every such \(s\).

Fix \(L\ge2\).  On \(B_{Lr}(x_s)\), the estimate
\eqref{eq:bessel-kernel-ratio-bound} gives
\[
a_s(x_s-y)\le C(1+L).
\]
Therefore the contribution of \(B_{Lr}(x_s)\) is bounded from below by
\(
-C(1+L)d_r
\)
by \eqref{eq:trace-flatness}.

On
\(
I_j\setminus B_{Lr}(x_s),
\)
we have \(\Delta\widetilde u_j=\Delta u_j>0\).  Hence the negative part of
the integrand is bounded by
\[
\int_{\abs y\ge Lr}(G_s*G_s)(y)\,dy
\le C(1+L)^Ne^{-cL}.
\]
On \(\mathbb R^n\setminus I_j\), the distance from \(x_s\) is at least
\(R_j/2\), and
\[
\left|
\frac{\Delta\widetilde u_j}{T_s(x_s)}-1
\right|\le C.
\]
The decay estimate in \cref{lem:bessel-resolvent} bounds this contribution
by
\[
C(1+R_j/r)^Ne^{-cR_j/r}
\le C\epsilon_r.
\]

Choose
\(
L=C_0\bigl(1+\abs{\log A(r)}\bigr)
\)
with \(C_0\) sufficiently large.  Since \(A(r)\le d_r^2\) and
\(0<A(r)\le1\) for all sufficiently large \(j\), the two exponential
terms above are bounded by
\(
Cd_r\bigl(1+\abs{\log A(r)}\bigr).
\)
We conclude that
\begin{equation}
s\frac d{ds}\log N(s)
\ge
-Cd_r\bigl(1+\abs{\log A(r)}\bigr).
\label{eq:logN-differential}
\end{equation}

It remains to integrate \eqref{eq:logN-differential} for
\(0<r\le h_j^0\).  Since
\(
\abs{\log A(r)}\le\alpha\abs{\log r},
\)
we have
\begin{equation}
\int_0^{h_j^0}
r^{\alpha/2}
\bigl(1+\abs{\log A(r)}\bigr)\frac{dr}{r}
\le
C(h_j^0)^{\alpha/2}
\bigl(1+\abs{\log h_j^0}\bigr)
=o(1).
\label{eq:holder-error-integral}
\end{equation}

For the second term in the formula of $d_r$, set \(q=M_jr/R_j\).  Since
\(
0<q\le M_jh_j^0/R_j=1/\ell_j
\)
and \(A(r)\ge q^2\), we have
\(
\abs{\log A(r)}\le2\abs{\log q}.
\)
Hence
\begin{equation}
\int_0^{h_j^0}
\frac{M_jr}{R_j}
\bigl(1+\abs{\log A(r)}\bigr)\frac{dr}{r}
\le
C\int_0^{1/\ell_j}
(1+\abs{\log q})\,dq
=o(1).
\label{eq:scale-error-integral}
\end{equation}

For the last term, put \(t=R_j/r\).  Then
\(
t\ge\frac{R_j}{h_j^0}=\ell_jM_j.
\)
By the definition of \(\epsilon_r\),
\(
\epsilon_r^{1/2}
\le C(1+t)^Ne^{-ct}.
\)
Moreover, \(A(r)\ge r^\alpha=(R_j/t)^\alpha\), so
\[
\abs{\log A(r)}
\le C\bigl(1+\abs{\log R_j}+\log t\bigr).
\]
Therefore
\begin{equation}
\begin{aligned}
&\int_0^{h_j^0}
M_j\epsilon_r^{1/2}
\bigl(1+\abs{\log A(r)}\bigr)\frac{dr}{r}\\
&\quad\le
CM_j\int_{\ell_jM_j}^{\infty}
(1+t)^Ne^{-ct}
\bigl(1+\abs{\log R_j}+\log t\bigr)\frac{dt}{t}
=o(1).
\end{aligned}
\label{eq:tail-error-integral}
\end{equation}

By \eqref{eq:holder-error-integral}--\eqref{eq:tail-error-integral},
\[
\int_0^{h_j^0}
d_r\bigl(1+\abs{\log A(r)}\bigr)\frac{dr}{r}
=o(1).
\]
Suppose that \(N\) first reaches \(M_j/2\) at some
\(s_*\le(h_j^0)^2\).  Integrating
\eqref{eq:logN-differential} from \(\delta\) to \(s_*\), using
\(ds/s=2\,dr/r\), and then letting \(\delta\downarrow0\), we obtain
\[
N(s_*)\ge(1-o(1))N(0)\ge(1-o(1))M_j.
\]
This contradicts \(N(s_*)=M_j/2\).  Hence
\[
N((h_j^0)^2)\ge(1-o(1))M_j.
\]

Set
\(
s_j=(h_j^0)^2
\)
and choose \(z_j\) such that
\(
N(s_j)=\chi_0(z_j)T_{s_j}(z_j).
\)
Since \(\chi_0\) is supported in \(B_{R_j/4}(x_j)\), we have
\(
z_j\in B_{R_j/4}(x_j).
\)
Moreover, \(0<\chi_0(z_j)\le1\), and therefore
\[
T_{s_j}(z_j)
\ge N(s_j)
\ge(1-o(1))M_j.
\]
By the second-difference representation in \cref{lem:bessel-resolvent},
\[
\begin{aligned}
\int_{\mathbb R^n}
Q_y\widetilde u_j(z_j)\,dW_{h_j^0}(y)
&=
\mathcal R_{(h_j^0)^2}
\Delta\widetilde u_j(z_j)\\
&=
T_{(h_j^0)^2}(z_j)\\
&\ge(1-o(1))M_j.
\end{aligned}
\]

The global Hessian bound for \(\widetilde u_j\) implies
\[
\abs{Q_y\widetilde u_j(z_j)}\le CM_j.
\]
Choose \(\delta>0\) sufficiently small and \(L>1\) sufficiently large.
Since \(W_h(\mathbb R^n)=n\), the last part of \cref{lem:bessel-resolvent} and the
preceding bound imply that
\[
\int_{\abs y<\delta h_j^0}
\abs{Q_y\widetilde u_j(z_j)}\,dW_{h_j^0}(y)
+
\int_{\abs y>Lh_j^0}
\abs{Q_y\widetilde u_j(z_j)}\,dW_{h_j^0}(y)
\le\frac14M_j.
\]
Consequently,
\[
\int_{\delta h_j^0\le\abs y\le Lh_j^0}
Q_y\widetilde u_j(z_j)\,dW_{h_j^0}(y)
\ge cM_j.
\]
Since \(W_{h_j^0}(\mathbb R^n)=n\), there exists \(y_j\) such that
\[
\delta h_j^0\le\abs{y_j}\le Lh_j^0
\]
and
\[
Q_{y_j}\widetilde u_j(z_j)\ge cM_j.
\]
Set
\[
 h_j=\abs{y_j},\qquad e_j=\frac{y_j}{\abs{y_j}}.
\]
Then \(h_j\sim h_j^0=(M_jF(M_j))^{-1}\). 
Moreover,
\(h_j^0/R_j=(\ell_jM_j)^{-1}\to0\).  Since
\(z_j\in B_{R_j/4}(x_j)\) and \(\widetilde u_j=u_j\) on
\(B_{3R_j/4}(x_j)\), it follows for all sufficiently large \(j\) that
\[
 Q_{h_j}^{e_j}u_j(z_j)
 =Q_{y_j}\widetilde u_j(z_j)\ge cM_j.
\]

It remains to extend this inequality from \(z_j\) to a small ball.  For
\(x\) such that \(x\), \(x+h_je_j\), and \(x-h_je_j\) belong to
\(B_{R_j}(x_j)\),
\[
\abs{DQ_{h_j}^{e_j}u_j(x)}
= \frac1{h_j^2}
\abs{
Du_j(x+h_je_j)+Du_j(x-h_je_j)-2Du_j(x)
} \le\frac{CM_j}{h_j}.
\]
Consequently,
\[
Q_{h_j}^{e_j}u_j(x)
\ge
Q_{h_j}^{e_j}u_j(z_j)
-\frac{CM_j}{h_j}\abs{x-z_j}.
\]
After choosing \(c>0\) sufficiently small, we obtain
\[
Q_{h_j}^{e_j}u_j\ge cM_j
\quad\text{on }B_{ch_j}(z_j).
\]
This proves \eqref{eq:directional-difference} and finishes the proof of \cref{lem:bessel}.
\end{proof}

\subsection{A set where the trace is large}
\label{sec:large-trace-set}

Suppress the index \(j\).
Write \(M,R,h,\ell_M,x_M,z_M,e\) for \(M_j,R_j,h_j,\ell_j,x_j,z_j,e_j\), respectively, translate coordinates so that \(z_M=0\), and put \(q=\Delta u\).
The translated center domain is
\[
 \Omega_{\mathrm{cen}}^h
 =\{x:x+[-h,h]e\subset B_R(x_M)\}.
\]
All averages below are taken only in this domain.

Define
\[
 W_h(x)=u(x+he)+u(x-he)-2u(x),
\]
\[
 U_T(x)=\int_{-1}^1(1-\abs t)u(x+the)\,dt,\qquad Z_T=U_T-u.
\]
Choose a fixed sufficiently small \(\theta\in(0,1/4)\), and let
\[
 d\mu(t)=\frac{1-\theta}{2}(\delta_{-1}+\delta_1)
                +\theta(1-\abs t)\,dt,
\]
\[
 U_\mu(x)=\int u(x+the)\,d\mu(t),\qquad Z_\mu=U_\mu-u.
\]
By \(\abs{D^2u}<q\le2M\) on the stopping ball, we have
\[
 \abs{Z_\mu}\le CMh^2,\qquad
 \abs{DZ_\mu}\le CMh.
\]
Set \(a=c_aMh^2\), with \(c_a>0\) chosen as in \cref{lem:bessel}.
Since
\[
 \abs{Z_T}\le CMh^2\int_0^1(1-t)t^2\,dt,
\]
choosing \(\theta\) small gives
\begin{equation}
 Z_\mu\ge a/4
\quad\text{on a fixed smaller ball in }B_{ch}(z_M).
\label{eq:positive-average}
\end{equation}
Fix \(\ell_0=a/32\), let \(V\) be the relative component of \(\{Z_\mu>\ell_0\}\) containing the ball in \eqref{eq:positive-average}, and set
\[
 \widehat w=(Z_\mu-\ell_0)\mathbf 1_{V}.
\]
On every convex cylinder compactly contained in \(\Omega_{\mathrm{cen}}^h\), this zero extension is continuous and retains the \(CMh\) Lipschitz bound: a segment leaving \(V\) first meets an interior relative boundary on which \(Z_\mu=\ell_0\).
We make no assertion about a possible intersection of \(V\) with the outer boundary of \(\Omega_{\mathrm{cen}}^h\).

\begin{lemma}\label{lem:transverse-ellipticity}
Where \(\widehat w>0\), define
\[
 H_\mu=D^2U_\mu,\qquad
 K_\mu=\frac12(H_\mu+D^2u),\qquad
 B_\mu=\Slin(K_\mu).
\]
Then
\begin{equation}
 \langle B_\mu,D^2\widehat w\rangle\ge-Ch^\alpha,
\label{eq:transverse-equation}
\end{equation}
and, on this component,
\begin{equation}
 cMI_{e^\perp}\le B_\mu|_{e^\perp}\le CMI_{e^\perp},
\qquad 0<B_\mu\le CMI.
\label{eq:transverse-ellipticity}
\end{equation}
\end{lemma}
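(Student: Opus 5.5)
The plan is to prove \eqref{eq:transverse-equation} by a concavity/averaging argument, and \eqref{eq:transverse-ellipticity} by combining the trace cap \eqref{eq:local-trace-cap} with a lower bound on the $e$-directional curvature encoded in $Z_\mu>\ell_0$.

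\emph{The equation.} Since $\mu$ is a probability measure, $H_\mu(x)=\int D^2u(x+the)\,d\mu(t)$ is an average of matrices in $\Gtwo$. By \cref{lem:newton} and the concavity of $G$ (exactly as for $\bar H_{r,x}$ in the proof of \cref{lem:bessel}) we get $H_\mu\in\Gtwo$ and
\[
 G(H_\mu(x))\ge\int g(x+the)\,d\mu(t)\ge g(x)-Ch^\alpha ,
\]
using $[g]_{C^\alpha}\le C$ and $\abs{the}\le h$. Since $g$ is bounded, squaring gives $\sigma_2(H_\mu)\ge f(x)-Ch^\alpha=\sigma_2(D^2u(x))-Ch^\alpha$. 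The cone $\Gtwo$ is convex, so $K_\mu\in\Gtwo$. The quadratic identity used before \eqref{eq:averaged-hessian-upper} then gives
\[
\sigma_2(H_\mu)-\sigma_2(D^2u)=\bigl\langle \Slin(K_\mu),H_\mu-D^2u\bigr\rangle
=\langle B_\mu,D^2Z_\mu\rangle .
\]
Where $\widehat w>0$ we have $D^2\widehat w=D^2Z_\mu$, which yields \eqref{eq:transverse-equation}.

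\emph{Upper bounds and positivity.} Since $K_\mu\in\Gtwo$, \cref{lem:newton} gives $B_\mu=\Slin(K_\mu)>0$. All translates $x+the$ stay in the stopping ball, so $\tr K_\mu\le 2M$ by \eqref{eq:local-trace-cap}, and therefore $B_\mu\le(\tr K_\mu)I\le CMI$.

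\emph{Transverse lower bound.} For a unit vector $v\perp e$ we have $\langle B_\mu v,v\rangle=\tr K_\mu-(K_\mu)_{vv}$. I would first prove an algebraic lemma: for $A\in\Gtwo$ and orthonormal $e,v$ with $A_{ee}>0$,
\[
\tr A-A_{vv}\ge c_n\frac{A_{ee}^2}{\tr A}.
\]
A model check supports it: for $\lambda=(a,b,-x)$ with $ab-x(a+b)>0$ one gets $a-x\ge a^2/(a+b)$. For the proof, I would apply the positivity of $\Slin$ to the compression of $A$ to the complement of $v$, together with the expansion $\sigma_2(A)=A_{vv}(\tr A-A_{vv})+\sigma_2(A|_{v^\perp})-\sum_{i\ne v}A_{vi}^2$ and a Newton–Maclaurin bound on $\sigma_2(A|_{v^\perp})$ in terms of its trace and the entry $A_{ee}$. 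I would apply the lemma to $H_\mu$ instead of $K_\mu$, using $B_\mu=\tfrac12\bigl(\Slin(H_\mu)+\Slin(D^2u)\bigr)$ and $\Slin(D^2u)>0$. With $\tr H_\mu\le 2M$, the transverse bound $B_\mu|_{e^\perp}\ge cM$ then reduces to showing $(H_\mu)_{ee}\ge cM$ on $V$.

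\emph{Main obstacle.} The hardest step is deducing $(H_\mu)_{ee}\ge cM$ from $Z_\mu>\ell_0=a/32$. Taylor expansion along $e$ writes $Z_\mu=h^2\int k(\tau)\,u_{ee}(x+\tau he)\,d\tau$ with an explicit nonnegative kernel $k$. By contrast, $(H_\mu)_{ee}$ weights $u_{ee}$ with $\mu$ itself, with mass $\tfrac{1-\theta}{2}$ at each endpoint $\pm1$. The two weightings are not pointwise comparable, and $u_{ee}$ may be negative, of size $\ge -2M$ by \eqref{eq:eigenvalue-bounds}. I would first try to exploit the freedom in $\theta$ and in $\ell_0$, comparing the endpoint atoms of $\mu$ with the tent part of $Z_\mu$. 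Failing that, I would lower-bound the transverse eigenvalues through $\tr K_\mu$ directly, using the Bessel-average trace concentration \eqref{eq:trace-flatness} to get $\tr K_\mu\ge cM$ on the relevant component. This is where I expect the argument to need the most care, and possibly a redefinition of $V$.
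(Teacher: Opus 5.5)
Your proof of \eqref{eq:transverse-equation} (concavity of $G$ over the probability measure $\mu$, squaring, and the polarization identity $\sigma_2(H_\mu)-\sigma_2(D^2u)=\langle B_\mu,D^2Z_\mu\rangle$) is correct and is the paper's argument. The gap is in the transverse lower bound. You reduce it to $(H_\mu)_{ee}\ge cM$ on $V$, but this does not follow from $Z_\mu>\ell_0$ and can fail. Indeed $(H_\mu)_{ee}=\frac{1-\theta}{2}\bigl(u_{ee}(x+he)+u_{ee}(x-he)\bigr)+\theta\,e^TH_Te$. The endpoint atoms carry total weight $1-\theta\approx1$, and the pointwise values $u_{ee}(x\pm he)$ are not controlled by $Z_\mu$; for $n\ge3$ nothing prevents them from being negative of order $M$. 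Your fallback does not close the gap either. A bound $\tr K_\mu\ge cM$ alone says nothing about $B_\mu|_{e^\perp}$: a matrix $K_\mu\approx q_K\,\tau\otimes\tau$ with $\tau\perp e$ has trace $q_K$ but $\tau^TB_\mu\tau\approx0$. Some nondegeneracy in the $e$ direction has to enter. Moreover, \eqref{eq:trace-flatness} concerns a Bessel average around $x_s$, not points of $V$.

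The missing idea is to isolate the triangular part instead of $H_\mu$. Set $H_T=\int_{-1}^1(1-\abs t)D^2u(x+the)\,dt$. Linearity of $\Slin$ and $\Slin>0$ on $\Gtwo$ give
\[
 B_\mu=\tfrac12\Slin(D^2u)+\tfrac{1-\theta}{4}\bigl(\Slin(D^2u(x+he))+\Slin(D^2u(x-he))\bigr)+\tfrac\theta2\Slin(H_T)\ge\tfrac\theta2\Slin(H_T).
\]
Two integrations by parts give exactly $e^TH_Te=Q_h^eu$. Write $Z_\mu=\frac{1-\theta}{2}W_h+\theta Z_T$; taking $\theta$ small so that $\theta\sup\abs{Z_T}\le a/64$ turns $Z_\mu>a/32$ into $Q_h^eu\ge c_dM$. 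Your algebraic lemma needs no Newton--Maclaurin argument: $A_{ee}^2+A_{vv}^2\le\abs A^2<(\tr A)^2$ from \eqref{eq:newton-positive} gives $\tr A-A_{vv}>A_{ee}^2/(2\tr A)$. Applied to $H_T$ with $\tr H_T\le 2M$, it yields $\tau^TB_\mu\tau\ge\theta c_d^2M/8$ for every unit $\tau\perp e$.

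This route would be shorter than the paper's. The paper uses $\tr H_T\ge e^TH_Te$ only to get $\tr K_\mu\ge cM$. It then needs a variance identity for the trace-normalized components, plus a dichotomy on $\sigma_2(K_\mu/\tr K_\mu)$, to exclude exactly the transverse rank-one degeneration described above.

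Finally, $B_\mu\le(\tr K_\mu)I$ is a slip, since $K_\mu$ may have negative eigenvalues. Use $\abs{K_\mu}<\tr K_\mu$ to get $B_\mu\le2(\tr K_\mu)I$ instead.
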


\begin{proof}
The cone is convex, so \(K_\mu\in\Gtwo\) and \(B_\mu>0\).
Quadratic polarization gives the exact equality
\[
 \langle B_\mu,D^2Z_\mu\rangle
 =\sigma_2(H_\mu)-\sigma_2(D^2u).
\]
Concavity and homogeneity of degree one give
\[
 G(H_\mu)\ge\int g(x+the)\,d\mu(t)\ge g(x)-Ch^\alpha.
\]
The fixed upper and lower bounds for \(g\) imply \eqref{eq:transverse-equation}.

The choice \(\ell_0=a/32\), which lies strictly above the triangular error, implies on the component
\[
 Q_h^eu=\frac{W_h}{h^2}\ge c_dM.
\]
For
\[
 H_T=\int_{-1}^1(1-\abs t)D^2u(x+the)\,dt,
\]
one has \(e^TH_Te=Q_h^eu\).
Since \(\Slin(H_T)>0\),
\[
 \tr H_T-e^TH_Te=e^T\Slin(H_T)e>0.
\]
Thus \(\tr H_T\ge c_dM\).
Because \(H_T\) occurs with fixed positive weight in \(K_\mu\), while all translated traces are at most \(2M\),
\begin{equation}
 cM\le q_K:=\tr K_\mu\le CM.
\label{eq:average-trace-range}
\end{equation}

We now prove the lower bound in \eqref{eq:transverse-ellipticity} without an eigenvector alignment assumption.
Write each matrix entering the convex average as \(H_i=q_iS_i\), where \(\tr S_i=1\), and write
\[
 S=K_\mu/q_K=\sum_i p_iS_i,\qquad p_i>0,\quad\sum_i p_i=1.
\]
The triangular component has \(p_T\ge p_*>0\) and
\begin{equation}
 e^TS_Te\ge c_*>0.
\label{eq:directional-mass}
\end{equation}
For an admissible matrix with trace one,
\[
 \abs{S_i}^2=1-2\sigma_2(S_i).
\]
The Euclidean variance identity therefore yields
\[
 \sum_i p_i\abs{S_i-S}^2
 =2\sigma_2(S)-2\sum_i p_i\sigma_2(S_i)
 \le2\sigma_2(S).
\]
If \(\sigma_2(S)\le\eta\), then
\[
 \abs{S_T-S}^2\le2\eta/p_*.
\]
For fixed small \(\eta\), \eqref{eq:directional-mass} gives \(e^TSe\ge c_e>0\).
Hence, for every unit \(\tau\perp e\),
\[
 1\ge\abs S^2\ge(\tau^TS\tau)^2+(e^TSe)^2,
\]
so \(\tau^TS\tau\le\sqrt{1-c_e^2}<1\).
It follows that
\[
 \tau^TB_\mu\tau=q_K(1-\tau^TS\tau)\ge cM.
\]
If \(\sigma_2(S)\ge\eta\), let \(\lambda_1\) be its largest eigenvalue and \(\nu\) a corresponding eigenvector.
The exact identity
\[
 1-\lambda_1
 =\sigma_2(S)+\frac12\abs{S-\nu\otimes\nu}^2
\]
shows \(1-\lambda_1\ge\eta\), and consequently \(\xi^TB_\mu\xi\ge\eta q_K\) for every unit \(\xi\).
This proves the lower bound.
The upper bound follows from \(\abs S<1\) and \(q_K\le CM\):
\[
 0<\xi^TB_\mu\xi=q_K(1-\xi^TS\xi)\le2q_K\le CM.
\]
\end{proof}

The transverse spreading argument uses the following local maximum principle, proved for example in \cite[Chapter~4]{CC1995}.
\begin{proposition}
\label{prop:local-maximum}
Let \(m\ge1\), \(0<\lambda\le\Lambda\), and let \(z\ge0\) be a continuous viscosity subsolution of
\[
 \mathcal M_{\lambda,\Lambda}^+(D^2z)\ge-\delta
 \quad\text{in }B_{2R}\subset\mathbb R^m.
\]
There exist \(p>0\) and \(C\), depending only on \(m,\lambda,\Lambda\), such that
\[
 \sup_{B_R}z
 \le C\left[
 \left(\frac{1}{\abs{B_{2R}}}\int_{B_{2R}}z^p\right)^{1/p}+\delta R^2\right].
\]
\end{proposition}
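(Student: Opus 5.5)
We first reduce by scaling: setting \(\tilde z(x)=z(x_0+Rx)\) turns the inequality into \(\mathcal M^+_{\lambda,\Lambda}(D^2\tilde z)\ge-\delta R^2\) in \(B_2\). Since \(\mathcal M^+\) is positively homogeneous, we may then divide by \(\norm{\tilde z}_{L^p(B_2)}+\delta R^2\). So it suffices to prove the following normalized claim: if \(z\ge0\) satisfies \(\mathcal M^+(D^2z)\ge-\delta\) in \(B_2\) with \(\delta\le1\) and \(\norm{z}_{L^p(B_2)}\le1\), then \(\sup_{B_1}z\le C\). We follow the route of \cite[Chapter~4]{CC1995}, which rests on two ingredients.

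The first ingredient is the Alexandrov--Bakelman--Pucci estimate for viscosity supersolutions of \(\mathcal M^-\). The second is the \(L^\eps\) (weak Harnack) lemma derived from it. That lemma says: if \(w\ge0\) satisfies \(\mathcal M^-(D^2w)\le\delta\) in a cube \(Q_{4\sqrt m}\) and \(\inf_{Q_3}w\le1\), then
\[
 \abs{\{w>t\}\cap Q_1}\le Ct^{-\eps}\qquad (t>0),
\]
with \(\eps,C\) depending only on \(m,\lambda,\Lambda\). Its proof builds a barrier that is very negative on a small cube and has a controlled Pucci operator. It then applies ABP to \(w\) plus this barrier to obtain the basic measure estimate \(\abs{\{w\le M_0\}\cap Q_1}\ge\mu\). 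Finally it iterates this estimate through the Calder\'on--Zygmund cube decomposition to obtain the power decay \(\abs{\{w>M_0^k\}\cap Q_1}\le(1-\mu)^k\). The paper only cites this result, so I would take both ABP and the \(L^\eps\) lemma as known and concentrate on deriving the local maximum principle from them.

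The derivation is the standard contradiction/blow-up argument. First choose \(p<\eps\). Chebyshev's inequality then gives the decay \(\abs{\{z>t\}\cap B_2}\le t^{-p}\). Fix a large constant \(\nu_0\) and an exponent \(\gamma=m/\eps\). Consider \(h(x)=h_0(1-\abs x)^{-\gamma}\) on \(B_1\), with \(h_0\) chosen large. Suppose \(z\not\le h\) on \(B_1\). Then take the smallest \(h_0\) for which \(z\le h\), and let \(x_0\) be a contact point, so \(z(x_0)=h(x_0)=:\nu_0\) with \(d=1-\abs{x_0}\).

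On the ball \(B_{d/2}(x_0)\) we have \(z\le 2^\gamma\nu_0\). Apply the \(L^\eps\) lemma to the supersolution
\[
 w=\frac{2^\gamma\nu_0-z}{(2^\gamma-1/2)\nu_0},
\]
rescaled to a cube of side comparable to \(rd\), where \(r\) is small. This gives \(\{z\le\nu_0/2\}\) a measure in \(B_{rd}(x_0)\) at most a small multiple of \((rd)^m\). On the other hand, the \(L^p\) decay gives \(\abs{\{z>\nu_0/2\}}\le C\nu_0^{-p}\), which is \(\le c\,h_0^{-p}d^{m}\) by the choice \(\gamma p\le m\). Choosing \(h_0\) large, the two bounds together contradict \(\abs{B_{rd}(x_0)}\simeq(rd)^m\). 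Hence \(z\le h\) with \(h_0\le C\). In particular \(\sup_{B_{1/2}}z\le C\), and a covering argument gives the bound on \(B_1\) after adjusting the radius.

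The main obstacle is the bookkeeping in the contradiction step. One must choose \(\gamma\), \(r\) and \(h_0\) in the right order, and check that the error term \(\delta\) survives both the rescaling and the affine normalization defining \(w\). It must remain small relative to \(\nu_0\), which holds because \(\delta\le1\le\nu_0\). The nonnegativity of \(z\) is used exactly there, to guarantee \(w\le\) a constant times \(2^\gamma\). If full detail were needed, I would simply quote \cite[Theorem~4.8]{CC1995} for the \(L^\eps\) lemma and carry out only this last step.
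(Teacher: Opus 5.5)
Your overall route, deriving the local maximum principle from the \(L^\varepsilon\) lemma by the barrier and contact-point argument, is the argument of \cite[Chapter~4]{CC1995}, which the paper cites without proof. However, the contradiction step you propose to carry out yourself fails as written, for two concrete reasons.

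\textbf{The exponents are reversed.} At the contact point \(\nu_0=h_0d^{-\gamma}\), so \(\nu_0^{-p}=h_0^{-p}d^{\gamma p}\). Since \(0<d\le1\), the bound \(\nu_0^{-p}\le c\,h_0^{-p}d^m\) requires \(\gamma p\ge m\), not \(\gamma p\le m\). With your choices \(p<\varepsilon\) and \(\gamma=m/\varepsilon\) you have \(\gamma p<m\). The final comparison \(c(rd)^m\le Ch_0^{-p}d^{\gamma p}\) then only gives \(h_0^p\le Cr^{-m}d^{\gamma p-m}\), which is no uniform bound as \(d\to0\). Take \(\gamma=m/p\) instead; then any \(p>0\) works, and the restriction \(p<\varepsilon\) is unnecessary.

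\textbf{Your normalization of \(w\) cannot give smallness.} With \(w=\frac{2^\gamma\nu_0-z}{(2^\gamma-1/2)\nu_0}\), the set \(\{z\le\nu_0/2\}\) is \(\{w\ge1\}\). You only know \(\inf w\le w(x_0)=\frac{2^\gamma-1}{2^\gamma-1/2}\), which is close to \(1\). So the \(L^\varepsilon\) lemma bounds the relative measure of this set only by a fixed constant, not a small one. Shrinking the cube to side \(\sim rd\) changes nothing, because the lemma is scale invariant. For instance, \(z=\nu_0\max\{0,1+(2^\gamma-1)x_1/\rho\}\) is convex, has \(z(0)=\nu_0\) and \(0\le z\le2^\gamma\nu_0\) on \(B_\rho(0)\), and \(\{z\le\nu_0/2\}\) occupies a fraction of \(B_\rho(0)\) close to one half for large \(\gamma\), for every \(\rho\).

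The smallness must come from the sharper bound available on the small ball: \(z\le h\le(1-r)^{-\gamma}\nu_0\) on \(B_{rd}(x_0)\). Apply the lemma to
\[
 w=\frac{(1-r)^{-\gamma}\nu_0-z}{\bigl((1-r)^{-\gamma}-1\bigr)\nu_0}.
\]
This \(w\) is nonnegative there and equals \(1\) at \(x_0\). Moreover \(\{z\le\nu_0/2\}=\{w\ge t_r\}\) with \(t_r=1+\frac{1/2}{(1-r)^{-\gamma}-1}\to\infty\) as \(r\to0\). Choosing \(r\) small depending on \(\gamma\) makes \(Ct_r^{-\varepsilon}\le\tfrac12\). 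The error term is now at most of order \(\frac{(rd)^2\delta}{\gamma r\nu_0}\), which is still small because \(\nu_0\ge h_0\) is large. Then
\[
 c(rd)^m\le\abs{\{z>\nu_0/2\}\cap B_{rd}(x_0)}\le Ch_0^{-p}d^m
\]
bounds \(h_0\).

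Note also that the \(L^\varepsilon\) lemma needs \(w\ge0\). That comes from \(z\le h\), not from \(z\ge0\); nonnegativity of \(z\) is what makes \(\int z^p\) and the Chebyshev step meaningful.
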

\begin{lemma}\label{lem:partial-sup}
There is a measurable set \(S\), contained in an \(O(R)\) neighborhood of \(z_M\), such that
\[
 \abs S\ge cR^{n-1}h,\qquad \widehat w\ge ca\quad\text{on }S.
\]
\end{lemma}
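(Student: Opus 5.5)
We restrict $\widehat w$ to hyperplanes orthogonal to $e$ and use \cref{lem:transverse-ellipticity} to propagate its positivity over transverse distances of order $R$. Along the $e$ direction the set $V$ has thickness only of order $h$.

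\textbf{Coordinates and the slice equation.} Write $x=(x',te)$ with $x'\in e^\perp$. For each $t$ with $\abs t\le c h$ (so that the slice passes through the ball in \eqref{eq:positive-average}), consider the slice function $z_t(x')=\widehat w(x'+te)$ on a transverse ball $B'_{2\rho}\subset e^\perp$, where $\rho=cR$ is small enough that $B'_{2\rho}\times[-ch,ch]$ is a convex cylinder compactly contained in $\Omega_{\mathrm{cen}}^h$. Since $h\ll R$, the $(n-1)$-dimensional slice sees a large region.

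\textbf{Step 1: a nondegenerate transverse operator on each slice.} In $\{\widehat w>0\}$ the full equation \eqref{eq:transverse-equation} reads
\[
\langle B_\mu|_{e^\perp},D^2_{x'}\widehat w\rangle\ge-Ch^\alpha-2\langle B_\mu e,D\partial_e\widehat w\rangle-(e^TB_\mu e)\,\partial_{ee}\widehat w.
\]
The right side must not be lost, so the first idea is not to slice pointwise. Instead, average $\widehat w$ in $t$ against a smooth bump of width $ch$; call the result $\bar z(x')$. The $e$-derivatives then move onto the bump, giving terms of size $CM\cdot\|\widehat w\|_\infty/h^2\sim CM\cdot c_a M h^2/h^2$. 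This is of order $M\cdot a/h^2$ relative to $a$, which is too large compared with $\rho^{-2}$.

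A better route is to note that $B_\mu$, although not block diagonal, is a positive matrix with $B_\mu\le CMI$ and transverse block $\ge cM$ by \eqref{eq:transverse-ellipticity}. Take the test function $\phi(x)=\psi(x')$ depending only on transverse variables. Then $\langle B_\mu,D^2\phi\rangle=\langle B_\mu|_{e^\perp},D^2_{x'}\psi\rangle$, so $\widehat w$ is a viscosity subsolution in the full space of $\mathcal M^+_{cM,CM}(D^2\widehat w)\ge -Ch^\alpha$ in $n$ dimensions. After dividing by $M$ this becomes an $n$-dimensional Pucci inequality $\mathcal M^+_{\lambda,\Lambda}(D^2\widehat w)\ge -Ch^\alpha/M$ with $\lambda,\Lambda$ fixed. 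Extending by $0$ outside $V$ keeps $\widehat w$ a subsolution, as a maximum of subsolutions; this is why the continuity and interior relative boundary remark was made.

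\textbf{Step 2: apply the local maximum principle on anisotropic cylinders.} Apply \cref{prop:local-maximum} with $m=n$, but in the rescaled variables $y=(x'/\rho,\ t/h)$. In these variables the operator becomes $\rho^{-2}$ on the transverse block and $h^{-2}$ on the $e$ block. This rescaling is where real difficulty lies: it destroys uniform ellipticity unless one uses only transverse second derivatives.

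\textbf{Main obstacle and preferred route.} I therefore expect the key step to be a contradiction via a dyadic chain in $x'$. Assume the set $S=\{\widehat w\ge ca\}$ within the cylinder $C_\rho=B'_\rho\times[-h,h]$ has measure $<c\rho^{n-1}h$. Cover $C_\rho$ by $n$-balls of radius $h$, about $(\rho/h)^{n-1}$ of them. On each such ball, \cref{prop:local-maximum} with error $\delta h^2=Ch^{2+\alpha}/M\ll a=c_aMh^2$ bounds the sup by the $L^p$ average. Starting from the ball around $z_M$ where $\widehat w\ge a/4-\ell_0\ge a/8$, the averages force $\abs{\{\widehat w\ge ca\}\cap B_{2h}}\ge ch^n$ on that ball.

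To propagate across distance $\rho$, I would first try a transverse barrier. The only available barrier is the function $\psi(x')=\ell_0'(\rho^2-\abs{x'}^2)$ compared with $\widehat w$ from below, but $\widehat w$ is only a subsolution, so lower barriers are unavailable.

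Hence the propagation must come from sup bounds in the opposite direction. Use the Lipschitz bound $\abs{D\widehat w}\le CMh$ together with $\widehat w\ge a/8$ at $z_M$. Apply \cref{prop:local-maximum} on the larger transverse-scaled region, using $\sup\ge a/8$ on a ball of radius $\rho$ and $\delta\rho^2=Ch^\alpha\rho^2/M\ll a$. This gives $\fint_{B_{2\rho}}\widehat w^p\ge c a^p$, and since $\widehat w\le Ca$ it yields $\abs{\{\widehat w\ge ca\}}\ge c\abs{B_{2\rho}\cap\Omega_{\mathrm{cen}}^h}$.

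The remaining delicate point, which I expect to be the real obstacle, is the ellipticity after rescaling. I would handle it by working in $n$-balls of radius $\rho$ intersected with the thin slab of thickness $h$ where $\widehat w$ is supported. The measure of the slab is about $\rho^{n-1}h$, which gives $\abs S\ge cR^{n-1}h$.
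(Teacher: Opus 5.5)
Your proposal has a genuine gap at its central step. In Step~1 you conclude that $\widehat w$ is an $n$-dimensional viscosity subsolution of $\mathcal M^+_{\lambda,\Lambda}(D^2\widehat w)\ge -Ch^\alpha/M$ with a fixed $\lambda>0$. However, \cref{lem:transverse-ellipticity} gives the lower bound $cM$ only for the block $B_\mu|_{e^\perp}$; in the $e$ direction it gives only $B_\mu>0$, and this cannot be improved. If the translated Hessians are close to $\mathrm{diag}(M,\epsilon,\dots,\epsilon)$ with $e=e_1$ and $\epsilon=f/((n-1)M)$, then $e^TB_\mu e\approx(n-1)\epsilon\sim 1/M$. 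Checking the inequality only on test functions $\psi(x')$ does not fix this. A viscosity inequality must hold for every upper test, and a function of $x'$ alone generally cannot touch $\widehat w$ from above. So the local maximum principle is not available for $\widehat w$ on $h$-balls or on $\rho$-balls, and your last paragraph leaves exactly this ellipticity issue unresolved. The claim that $\widehat w$ lives in a slab of $e$-thickness $O(h)$ is also unproved. It is moreover inconsistent with the bound $\abs{\{\widehat w\ge ca\}}\ge c\abs{B_{2\rho}\cap\Omega^h_{\mathrm{cen}}}\sim\rho^n$ that you derive afterwards.

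The missing idea is to eliminate the degenerate $e$-variable with a penalized partial supremum, rather than by slicing, averaging, or rescaling. Take $T=c_TR$, $r_\perp=\varepsilon_1T$ and $A_*>\sup\widehat w/a+1$. The paper sets $z(y)=\bigl[\max_{\abs t\le T}\{\widehat w(te+y)-A_*a\,t^2/T^2\}\bigr]_+$ for $y\in e^\perp$. Where $z>0$ the maximizer is interior. If $\phi$ touches $z$ from above at $y_0$, then $\phi(y)+A_*a\,t^2/T^2$ touches $\widehat w$ from above at the maximizing point. This lifted test has zero mixed Hessian block and $(e,e)$-entry $2A_*a/T^2$. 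Hence the poorly controlled coefficient $e^TB_\mu e\le CM$ contributes only the error $CMa/T^2$. The uniformly elliptic transverse block then gives $\mathcal M^+_{\lambda,\Lambda}(D_y^2z)\ge -C(h^\alpha/M+a/T^2)$ on $B^{e^\perp}_{2r_\perp}$, which is an $(n-1)$-dimensional problem. Apply \cref{prop:local-maximum} with $m=n-1$, using $z(0)\ge ca$, $0\le z\le Ca$, $r_\perp/T\ll1$ and $h^\alpha r_\perp^2/M\ll a$. The last condition holds since $a\sim Mh^2$, $h\sim(MF(M))^{-1}$ and $R=\ell_M/F(M)$. This yields $\abs{\{z\ge c_1a\}}\ge cr_\perp^{n-1}$. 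The factor $h$ comes from the Lipschitz bound, not from a thin support. On each such fiber some point has $\widehat w\ge c_1a$. Since $\abs{D\widehat w}\le CMh$ and $a\sim Mh^2$, the fiber contains a segment of length $\ge ch$ on which $\widehat w\ge c_2a$. Fubini then gives $\abs S\ge cR^{n-1}h$.
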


\begin{proof}
Choose fixed constants in the order
\begin{equation}
 T=c_TR,\qquad r_\perp=\varepsilon_1T,
\label{eq:cylinder-scales}
\end{equation}
first \(c_T>0\) small and then \(\varepsilon_1>0\) small.
Since \(\abs{z_M-x_M}\le R/4\) and \(h/R\to0\),
\[
 Q_{2r_\perp,T}:=(-T,T)e+B_{2r_\perp}^{e^\perp}
 \Subset\Omega_{\mathrm{cen}}^h
\]
for all large \(M\), including all of its \([-h,h]e\)-translations.

Fix \(A_*>\sup\widehat w/a+1\), and define on \(e^\perp\)
\[
 z(y)=\left[
 \max_{\abs t\le T}
 \left\{\widehat w(te+y)-A_*a\frac{t^2}{T^2}\right\}
 \right]_+.
\]
When \(z(y)>0\), every maximizing \(t\) lies strictly in \((-T,T)\); at the faces the penalty exceeds \(\sup\widehat w\).
If \(\phi\) touches \(z\) from above at \(y_0\), then
\[
 (t,y)\mapsto\phi(y)+A_*a\,t^2/T^2
\]
touches \(\widehat w\) from above at an interior maximizing point.
The mixed Hessian block of this lifted test is exactly zero.
By \cref{lem:transverse-ellipticity}, after division by \(M\),
\[
 \mathcal M_{\lambda,\Lambda}^+(D_y^2z)
 \ge-C\left(\frac{h^\alpha}{M}+\frac a{T^2}\right)
\quad\text{in }B_{2r_\perp}^{e^\perp}.
\]
At a point where \(z=0\), a test from above has a local minimum and hence nonnegative Hessian, so taking the positive part causes no extra term.

Now \(z(0)\ge ca\) and \(0\le z\le Ca\).
The local maximum principle \cref{prop:local-maximum} gives a set of fixed positive measure
\begin{equation}
 \abs{\{y:z(y)\ge c_1a\}}\ge cr_\perp^{\,n-1}
\label{eq:transverse-superlevel}
\end{equation}
provided
\[
 \frac{h^\alpha r_\perp^2}{M}\ll a,\qquad
 \frac{r_\perp}{T}\ll1.
\]
The second condition was imposed in \eqref{eq:cylinder-scales}.
Since \(a\sim Mh^2\), the first is
\[
 \left(\frac{r_\perp}{Mh^{1-\alpha/2}}\right)^2\ll1.
\]
Using \(r_\perp\sim R=\ell_M/F(M)\) and \(h\sim(MF(M))^{-1}\), its square root is at most
\[
 C\frac{\ell_M}{M^{\alpha/2}F(M)^{\alpha/2}}\longrightarrow0
\]
for every fixed \(\alpha>0\).

For each \(y\) in \eqref{eq:transverse-superlevel}, the penalized objective is at least \(c_1a\) at its maximizing point and is negative at the axial faces.
Its axial Lipschitz constant is at most \(CMh+Ca/T\le CMh\); hence the maximizing point is at least \(ch\) from either face that it approaches.
Since \(\widehat w\) itself is \(CMh\)-Lipschitz and \(a\sim Mh^2\), its fiber superlevel \(\{\widehat w\ge c_2a\}\) has length at least \(ch\).
This remains true near an interior component boundary because the zero extension has the same Lipschitz constant.
Applying Fubini directly to the continuous superlevel set, without selecting a measurable maximizing point \(t_y\), gives
\[
 \abs{\{\widehat w\ge c_2a\}\cap Q_{2r_\perp,T}}
 \ge cr_\perp^{n-1}h\ge cR^{n-1}h.
\]
\end{proof}

\begin{proof}[Proof of \cref{prop:large-trace-set}]
On the set \(S\) from \cref{lem:partial-sup}, the quantitative large value construction and \eqref{eq:average-trace-range} give a probability average
\[
 \int\Delta u(x+the)\,d\nu(t)\ge cM,
\]
where \(\nu\) is the fixed probability measure representing \(K_\mu=(D^2u+H_\mu)/2\).
Since every translated trace satisfies \(0<\Delta u\le2M\), elementary truncation yields fixed \(c',\theta_0>0\) such that
\[
 \nu\{t:\Delta u(x+the)\ge c'M\}\ge\theta_0
\quad\text{for every }x\in S.
\]
Let \(H_M=\{\Delta u\ge c'M\}\) and \(U=S+[-h,h]e\).
Then
\[
\begin{aligned}
 \theta_0\abs S
 &\le\int\!\int_S\mathbf 1_{H_M}(x+the)\,dx\,d\nu(t)\\
 &=\int\abs{H_M\cap(S+the)}\,d\nu(t)
 \le\abs{H_M\cap U}.
\end{aligned}
\]
Thus \(E=H_M\cap U\) has the required measure.
Translation overlap does not create a loss because each integrand in the last line is bounded by the measure of the same fixed union.
No measurable selection is used.

We state only the proved lower bound.
If a set with comparable upper and lower bounds is ever convenient, one may choose a measurable subset of prescribed measure because Lebesgue measure is nonatomic.
The main proof does not need that selection.
\end{proof}

\begin{remark}
Every gradient estimate in this section concerns the function after an affine function has been subtracted.
It follows by integrating the Hessian bound on the stopping ball.
No step uses \(\norm{Du_j}_{L^\infty(B_2)}\).
\end{remark}

\section{The comparison solution and its trace estimates}
\label{sec:comparison}

We now construct a smooth comparison solution and prove the two trace estimates used in the contradiction.
Auxiliary results are introduced when they are first needed.

\subsection{Construction of the comparison solution}
\label{sec:dirichlet-problem}
Fix once and for all a smooth function
\[
 \eta:[0,\infty)\longrightarrow[0,1]
\]
which is nondecreasing and concave, satisfies \(\eta(0)=0\), is identically one on \([\tau_*,\infty)\) for a fixed \(\tau_*>1\), and obeys
\[
 \eta(t)^2\le C_\eta t\quad(t\ge0).
\]
For example, one may integrate a smooth nonincreasing function that is positive on \([0,\tau_*)\) and vanishes to infinite order at \(\tau_*\), then normalize the integral.
Since \(\eta\) is Lipschitz and \(\eta(0)=0\), the displayed inequality follows after increasing \(C_\eta\).

The comparison solution solves the following modified Dirichlet problem.
The trace cutoff retains the original H\"older right side where the trace is moderate and replaces it by a smooth lower approximation where the trace is large.
\begin{theorem}
\label{thm:dirichlet-problem}
Let \(D=B_R(x_D)\), let \(u,g,b,A\) be smooth on \(\overline D\), and assume
\[
 D^2u\in\Gtwo,\qquad G(D^2u)=g,\qquad
 0<b\le g,\qquad A=\zeta(g-b),\quad
 \zeta\in C_c^\infty(D),\quad 0\le\zeta\le1.
\]
For every prescribed \(P>0\), the problem
\begin{equation}
 G(D^2v)+A(x)\eta\!\left(\frac{\Delta v}{P}\right)=g(x)
 \quad\hbox{in }D,
 \qquad v=u\quad\hbox{on }\partial D 
\label{eq:adaptive-dirichlet}
\end{equation}
has a unique smooth admissible solution.
Moreover, for every \(0<c_*<\inf_D b\), the problem
\[
 G(D^2w_0)=c_*,\qquad w_0=u\quad\hbox{on }\partial D,
\]
has a unique smooth admissible solution and
\begin{equation}
 u\le v\le w_0\quad\hbox{in }D.
\label{eq:solution-comparison}
\end{equation}
\end{theorem}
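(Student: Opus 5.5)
We will solve \eqref{eq:adaptive-dirichlet} by the continuity method (equivalently, by the Caffarelli--Nirenberg--Spruck / Trudinger theory for fully nonlinear concave equations on a ball), viewing it as
\[
 F(D^2v,x):=G(D^2v)+A(x)\eta\!\left(\frac{\tr D^2v}{P}\right)=g(x).
\]
The first step is to record the structure of \(F\). Since \(\eta\) is nondecreasing and \(A\ge0\), the added term has derivative \(A\eta'(\cdot)P^{-1}\mathrm I\ge0\), so \(F\) is elliptic on \(\Gtwo\). Since \(\eta\) is concave and \(\tr\) is linear, the added term is concave, so \(F\) is concave. Moreover \(0\le A\eta\le A=\zeta(g-b)\le g-b\), and on \(\Gtwo\) we have \(\Delta v>0\). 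Hence any admissible solution satisfies
\[
 b\le\zeta b+(1-\zeta)g\le G(D^2v)\le g .
\]
So \(G(D^2v)\) is pinched between the positive smooth functions \(\inf b\) and \(g\), and the equation never degenerates.

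The second step is the comparison \eqref{eq:solution-comparison}. The same pinching shows that \(u\) is a subsolution of \eqref{eq:adaptive-dirichlet}: \(F(D^2u,x)=g+A\eta\ge g\). Also \(w_0\) is a supersolution: \(F(D^2w_0,x)\le c_*+A<b+(g-b)=g\) wherever \(A\) is at its largest, and \(c_*+A\eta\le c_*+g-b<g\) in general. Both functions share the boundary values of \(v\). The comparison principle for the elliptic operator \(F\) (apply it to the difference, linearized along the segment of admissible Hessians, which lies in \(\Gtwo\) by convexity of the cone, or argue at an interior extremum) gives \(u\le v\le w_0\). Uniqueness of \(v\) follows from the same argument. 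The existence and uniqueness of \(w_0\) is the classical Dirichlet problem for \(\sigma_2=c_*^2\) on a uniformly convex domain. The boundary datum is smooth, and \(u\) itself serves as a subsolution, since \(G(D^2u)=g>c_*\).

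The third step is existence of \(v\), via the continuity path \(F_t=G(D^2v)+tA\eta(\Delta v/P)=(1-t)G(D^2u)+tg\). Alternatively, use the path replacing \(A\) by \(tA\), which keeps \(u\) a subsolution throughout; at \(t=0\) the solution is \(u\). Openness follows from the implicit function theorem, since the linearization is uniformly elliptic at an admissible solution. Closedness needs \(C^{2,\beta}\) bounds independent of \(t\). The \(C^0\) bound comes from \(u\le v\le w_0\). The boundary gradient bound also comes from this comparison, since all three functions agree on \(\partial D\). The global gradient bound follows by a maximum principle for \(|Dv|^2\), using that \(x\)-derivatives of \(F\) involve only the smooth \(A,g\). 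Once \(|D^2v|\) is bounded, \(F\) is uniformly elliptic and concave, so Evans--Krylov with boundary \(C^{2,\beta}\) estimates (Krylov) closes the argument, and bootstrapping gives smoothness.

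\textbf{The main obstacle} is the a priori \(C^2\) estimate, namely the boundary double-normal estimate and the interior/global second-derivative maximum principle. For the global bound I would differentiate twice in a direction \(\xi\). Concavity of \(F\) in \(D^2v\) kills the second-order terms, leaving \(F^{ij}v_{\xi\xi ij}\ge g_{\xi\xi}-A_{\xi\xi}\eta-2A_\xi\eta'v_{\xi ll}/P\). Here \(F^{ij}\ge DG\), and \(\tr DG\ge c\) is bounded below on \(\Gtwo\). The mixed term \(A_\xi\eta' D_\xi\Delta v/P\) is controlled by adding \(|Dv|^2\) and \(|x|^2\) to the test function, since the trace cutoff has bounded derivatives at fixed \(P\); all constants may depend on \(P\) and the \(C^2\) norms of \(A,g\). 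This reduces the global bound to the boundary. At the boundary, \(A\) vanishes near \(\partial D\) because \(\zeta\in C_c^\infty(D)\). So in a neighborhood of \(\partial D\) the equation is exactly \(G(D^2v)=g\), and the CNS/Ivochkina--Trudinger boundary estimates for \(\sigma_2\) on the uniformly convex ball apply verbatim, with \(u\) as the lower barrier.
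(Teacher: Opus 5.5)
Most of your plan matches the paper's proof. The path multiplies $A$ by $t$. The subsolution $u$ and supersolution $w_0$ give $u\le v_t\le w_0$, and hence the $C^0$ and boundary-gradient bounds. Since $A=0$ in a collar of $\partial D$, the pure $\sigma_2$ boundary estimates apply there. Closedness then follows from Evans--Krylov plus the boundary estimates.

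Two small points. For the interior gradient, run the maximum principle on $\pm v_k+C(\underline u-v)$, using that $\mathcal L_t v_k=g_k-tA_k\eta$ is bounded, rather than on $\abs{Dv}^2$. For the boundary barriers, use a strict subsolution such as $\underline u=u+\kappa(\abs{x-x_D}^2-R^2)$ rather than $u$ itself, since $u$ solves the collar equation exactly.

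The genuine gap is in the step you flag as the main obstacle. After differentiating twice you are left with $-2tA_\xi\,\eta'(\Delta v/P)\,\partial_\xi\Delta v/P$. This term is linear in \emph{third} derivatives of $v$, and the mechanism you propose does not control it:
\begin{itemize}
\item Adding $\mu\abs{Dv}^2+\nu\abs{x}^2$ contributes only the second-order good term $2\mu F^{ij}v_{ki}v_{kj}$ and constants.
\item At a maximum of $v_{\xi\xi}+\mu\abs{Dv}^2+\nu\abs{x}^2$, the first-order condition determines $Dv_{\xi\xi}$, not $\partial_\xi\Delta v=\sum_l v_{\xi ll}$.
\item Boundedness of $\eta'$ at fixed $P$ leaves the factor $\partial_\xi\Delta v$ untouched.
\end{itemize}

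The missing idea, which is how the paper argues, is a dichotomy at the maximum point $x_0$ of $Q_\xi=v_{\xi\xi}+C(\underline u-v)$.
\begin{itemize}
\item If $\Delta v(x_0)<\tau_*P$, then $v_{\xi\xi}\le\lambda_{\max}<\Delta v<\tau_*P$ on $\Gtwo$. So $Q_\xi(x_0)$ is bounded with no differentiation at all.
\item If $\Delta v(x_0)\ge\tau_*P$, then $\eta'$ and $\eta''$ vanish at $x_0$ because $\eta\equiv1$ on $[\tau_*,\infty)$. Through second order the equation at $x_0$ is $G(D^2v)=g-tA$, so the mixed term is simply absent. Concavity of $G$ gives $\mathcal L_t v_{\xi\xi}\ge-\norm{D^2(g-tA)}_{L^\infty(D)}$. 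Then $\mathcal L_t(\underline u-v)\ge1$ contradicts $\mathcal L_t Q_\xi(x_0)\le0$ once $C$ is large.
\end{itemize}

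Alternatively, you can keep your auxiliary term and instead use the trace as the main term, $Q=\Delta v+\mu\abs{Dv}^2$. The first-order condition gives $\abs{D\Delta v}\le2\mu\abs{D^2v}\abs{Dv}$. The mixed term is then bounded by $C\mu$, because $s\eta'(s)\le\eta(s)\le1$ by concavity and $\eta(0)=0$. Finally, $F^{ij}v_{ki}v_{kj}\ge G^{11}\lambda_1^2\ge G\lambda_1/n\ge c\,\Delta v$ bounds $\Delta v$ at the maximum.
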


The proof uses the classical Dirichlet theorem and the concave estimates recorded next.
\begin{proposition}\label{prop:dirichlet}
Let \(D\) be a Euclidean ball, let \(h\in C^\infty(\overline D)\) be strictly positive, and let \(\phi\in C^\infty(\partial D)\).
If a smooth strictly \(2\)-admissible subsolution with boundary value \(\phi\) exists, then
\[
 \sigma_2(D^2w)=h\quad\text{in }D,\qquad w=\phi\quad\text{on }\partial D
\]
has a unique smooth \(2\)-admissible solution.
For a ball and strictly positive \(h\), the geometric hypotheses of the classical theorem are satisfied.
\end{proposition}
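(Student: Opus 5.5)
This is the classical Caffarelli--Nirenberg--Spruck / Ivochkina / Trudinger solvability theorem for the Dirichlet problem for \(\sigma_2\), with a subsolution hypothesis replacing the geometric convexity condition in the style of Guan. I would not reprove it. Instead I would reduce it to that theorem and check that its hypotheses hold on a Euclidean ball.

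\emph{Existence.} First, the boundary \(\partial D\) is smooth and uniformly convex, hence uniformly \(1\)-convex. This is exactly the geometric condition of Caffarelli--Nirenberg--Spruck for \(\sigma_2\) with strictly positive smooth data \(h\) and smooth boundary values \(\phi\). Second, if one prefers the subsolution formulation, a strictly admissible subsolution \(\underline w\) with \(\underline w=\phi\) on \(\partial D\) gives the a priori estimates in the standard order. The comparison principle yields \(\underline w\le w\le \bar w\), where \(\bar w\) is the harmonic extension of \(\phi\); this bounds \(w\) in \(C^0\) and \(Dw\) on \(\partial D\). The global gradient bound follows from the concavity of \(G\) and a maximum-principle argument. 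The boundary \(C^2\) estimates use \(\underline w\) as a barrier: tangential--tangential and mixed derivatives come from barriers built from \(\underline w-w\) and distance functions, and the double normal derivative comes from the strict \(1\)-convexity of \(\partial D\), or directly from the strict subsolution via Guan's argument. Third, the global \(C^2\) bound follows from the concavity of \(G\) and a maximum principle for \(\Delta w\). At that point the equation is uniformly elliptic and concave, so Evans--Krylov gives \(C^{2,\beta}\) up to the boundary, and Schauder bootstrapping gives smoothness. The continuity method, or degree theory along the path from \(\underline w\), then completes existence.

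\emph{Uniqueness.} This follows from the comparison principle for admissible solutions. If \(w_1,w_2\) are two solutions, the difference satisfies a linear uniformly elliptic equation with coefficients \(\int_0^1 \Slin(tD^2w_1+(1-t)D^2w_2)\,dt>0\), since \(\Gtwo\) is convex, and vanishes on \(\partial D\). Hence \(w_1=w_2\).

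\emph{Main obstacle.} The only genuinely delicate step is the boundary double normal estimate. For a ball I would simply cite the classical theorem (Caffarelli--Nirenberg--Spruck, Acta Math.\ 1985; Trudinger 1995) rather than redo the argument. The final sentence of the statement is then immediate: uniform convexity of the sphere supplies the required boundary curvature condition.
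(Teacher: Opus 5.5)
Your proposal is correct and matches the paper. The paper's proof likewise just invokes the $k=2$ case of Caffarelli--Nirenberg--Spruck (Theorem~3 of their 1985 Acta paper) and notes that the uniformly convex sphere satisfies the required boundary $(k-1)$-convexity condition; your sketch of the a priori estimates and your uniqueness argument via the averaged positive-definite linearization are standard and consistent with that citation.
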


This is the \(k=2\) case of \cite[Theorem~3]{CNS1985}.
We use it only when the operator has already been fixed.
\begin{proposition}\label{prop:concave-estimates}
Let \(0<\gamma<1\), and let \(F:B_1\times\operatorname{Sym}(n)\to\mathbb R\) satisfy
\begin{align}
 \lambda\tr N
 &\le F(x,M+N)-F(x,M)\le\Lambda\tr N
 &&(N\ge0),
 \label{eq:concave-uniform-ellipticity}\\
 F(x,tM+(1-t)N)
 &\ge tF(x,M)+(1-t)F(x,N)
 &&(0\le t\le1),
 \label{eq:concave-operator}\\
 \abs{F(x,M)-F(y,M)}
 &\le K(1+\abs M)\abs{x-y}^{\gamma}.
 \label{eq:concave-x-holder}
\end{align}
If
\[
 F(x,D^2u)=f(x)\quad\hbox{in }B_1,
 \qquad
 \norm{u}_{C^2(B_1)}+\norm f_{C^\gamma(B_1)}\le K,
\]
then, for some \(\beta\in(0,\min\{\gamma,\bar\beta(n,\lambda,\Lambda)\})\),
\begin{equation}
 \norm u_{C^{2,\beta}(B_{1/2})}
 \le C(n,\lambda,\Lambda,\gamma,K).
\label{eq:concave-interior-estimate}
\end{equation}
The same estimate holds when \(F\) is initially defined only near the compact set \(\overline{D^2u(B_1)}\), provided it has a global concave extension satisfying \eqref{eq:concave-uniform-ellipticity} and \eqref{eq:concave-x-holder}.

For the boundary version, let \(\Omega\) and \(u|_{\partial\Omega}=\phi\) be smooth, and set
\[
 \Omega_\rho=\{x\in\Omega:
                 \dist(x,\partial\Omega)<\rho\}.
\]
If the equation on \(\Omega_\rho\) is a fixed smooth concave uniformly elliptic equation and \(\norm u_{C^2(\overline\Omega)}\le K\), then
\begin{equation}
 \norm u_{C^{2,\beta}(\overline{\Omega_{\rho/2}})}\le C,
\label{eq:concave-boundary-estimate}
\end{equation}
where \(C\) depends only on the fixed equation, \(K\), \(\Omega\), \(\rho\), and the corresponding norms of the data.
\end{proposition}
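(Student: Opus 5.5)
The plan is to reduce \cref{prop:concave-estimates} to the classical Evans--Krylov--Safonov theory rather than reprove it. We will follow Caffarelli's perturbative approach for equations with Hölder dependence on \(x\), as in \cite[Chapter~8]{CC1995}. For the interior estimate, freeze the coefficient at a point \(x_0\in B_{1/2}\) and set \(F_0(M)=F(x_0,M)\). By \eqref{eq:concave-uniform-ellipticity} and \eqref{eq:concave-operator}, \(F_0\) is a concave, uniformly elliptic, translation-invariant operator. Evans--Krylov then gives \(C^{2,\bar\beta}\) estimates for solutions of \(F_0(D^2w)=\mathrm{const}\), with \(\bar\beta=\bar\beta(n,\lambda,\Lambda)\). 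This is exactly the ``interior \(C^{2,\bar\beta}\) estimates for the frozen equation'' hypothesis of Caffarelli's theorem.

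Next we measure the oscillation of \(F\) in \(x\). Since \(\norm u_{C^2(B_1)}\le K\), only matrices with \(\abs M\le K\) (plus a margin) are relevant. By \eqref{eq:concave-x-holder}, the quantity
\[
 \beta_F(x,x_0)=\sup_{M}\frac{\abs{F(x,M)-F(x_0,M)}}{1+\abs M}
\]
is at most \(K\abs{x-x_0}^\gamma\), and the same holds for \(f\). Caffarelli's perturbation theorem (approximating by frozen solutions on dyadic balls and iterating the quadratic-polynomial approximation) then yields a pointwise \(C^{2,\beta}\) expansion at each \(x_0\), for any \(\beta<\min\{\gamma,\bar\beta\}\). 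Covering \(B_{1/2}\) gives \eqref{eq:concave-interior-estimate}, with the \(C^2\) bound absorbing lower-order terms. For the clause about an \(F\) defined only near \(\overline{D^2u(B_1)}\), we simply apply the result to the global concave extension. The solution \(u\) solves the extended equation identically, and the constants depend only on the structural bounds of the extension, so nothing changes.

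For the boundary estimate \eqref{eq:concave-boundary-estimate}, we use Krylov's boundary \(C^{2,\beta}\) estimate for fixed smooth concave uniformly elliptic equations with smooth Dirichlet data. First flatten \(\partial\Omega\) locally by a smooth diffeomorphism. This preserves concavity in the Hessian variable up to smooth lower-order terms, because the Hessian transforms affinely at each point. Then subtract a smooth extension of \(\phi\). Krylov's estimate gives \(C^{2,\beta}\) up to the flat boundary in a half-ball, with \(C\) depending on the \(C^2\) bound \(K\), the equation, and the norms of \(\phi\) and \(\partial\Omega\). Combining this with the interior estimate on the remaining part of \(\Omega_{\rho/2}\), at distance comparable to \(\rho\) from \(\partial\Omega_\rho\cap\Omega\), finishes the proof.

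The main obstacle is the interior perturbation step. There we must check that Caffarelli's smallness hypothesis on \(\beta_F\) holds uniformly at small scales after rescaling \(u_r(x)=u(x_0+rx)/r^2\). The rescaled operator \(F_r(x,M)=F(x_0+rx,M)\) keeps the same ellipticity constants, and its oscillation is \(Kr^\gamma\), which is small for small \(r\). So we would first fix \(r_0(n,\lambda,\Lambda,\gamma,K)\) so that the smallness threshold holds, and then run the iteration. The boundary step is a citation once the flattening is done.
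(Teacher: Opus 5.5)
Your proposal is correct and follows the paper's route: the paper proves this proposition purely by citation, with Evans--Krylov for the interior estimate and the Krylov-type boundary estimate as used in \cite[Section~4]{CNS1985} for the collar. Your interior step goes through Caffarelli's perturbation theorem, combining frozen concave equations with the bound $\beta_F(x,x_0)\le K\abs{x-x_0}^\gamma$ (after absorbing $F(x,0)$ into the right-hand side). For merely H\"older $x$-dependence this is, if anything, the more precise reference, and the paper itself invokes \cite{Caffarelli1989} later.
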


The interior estimate is the Evans--Krylov estimate \cite{Evans1982,Krylov1982}.
The boundary estimate is the version used in \cite[Section~4, pp.~277--279]{CNS1985}.
We use both only after obtaining a uniform Hessian bound along the continuity path.

\begin{proof}[Proof of \cref{thm:dirichlet-problem}]
We give the full continuity argument because it shows that the prescribed number \(P\) is not restricted by a local implicit-function radius.

Fix \(0<c_*<\inf_D b\).
Since \(G(D^2u)=g>c_*\), the function \(u\) is a strict admissible subsolution for the constant equation.
Thus \cref{prop:dirichlet}, applied with right side \(c_*^2\), gives the stated solution \(w_0\).

We begin with the continuity path and the zeroth-order comparison.
For \(0\le t\le1\), define
\[
 \mathcal F_t(x,H)=G(H)+tA(x)\eta(\tr H/P)
\]
and solve \(\mathcal F_t(x,D^2v_t)=g\) with \(v_t=u\) on \(\partial D\).
The path starts at \(v_0=u\).
Since \(G\) is concave, and \(\eta\) is concave and nondecreasing, \(H\mapsto\mathcal F_t(x,H)\) is concave.
Its linearization is
\[
 \mathcal L_t=
 \left(G^{ij}(D^2v_t)+\frac{tA}{P}
       \eta'(\Delta v_t/P)\delta_{ij}\right)\partial_{ij},
\]
which is elliptic on \(\Gtwo\).

At \(u\), the left side of the path equation is at least \(g\).
At \(w_0\), it is at most
\[
 c_*+tA\le c_*+g-b<g.
\]
Comparison, with the convention that the function with the larger operator value lies below when the boundary values agree, gives \(u\le v_t\le w_0\).
Consequently
\begin{equation}
 \norm{v_t}_{L^\infty(D)}+\norm{w_0}_{L^\infty(D)}
 \le C\left(1+\norm{u}_{L^\infty(D)}\right).
\label{eq:linfty-bound}
\end{equation}
This is the only continuity-path bound used later in the contradiction.

We next construct a strict comparison function and bound the gradient.
Choose \(\kappa>0\), depending on the fixed smooth problem, so large that
\[
 \underline u=u+\kappa(\abs{x-x_D}^2-R^2)
\]
is strictly admissible and
\[
 \mathcal F_t(x,D^2\underline u)\ge g+1
 \quad\hbox{for every }0\le t\le1.
\]
The boundary trace of \(\underline u\) is again \(u\).
Concavity at \(D^2v_t\) yields
\[
 \mathcal L_t(\underline u-v_t)\ge1.
\]
Differentiating the path equation in a coordinate direction gives the exact identity
\[
 \mathcal L_t(v_t)_k=g_k-tA_k\eta(\Delta v_t/P).
\]
In particular, its right side has no third derivative and no uncontrolled Hessian factor.
Apply the maximum principle to \(\pm(v_t)_k+C(\underline u-v_t)\).
This bounds the interior gradient by the boundary gradient.
On \(\partial D\), the zeroth-order comparison and the common boundary trace give, for the outward normal,
\[
 (w_0)_\nu\le(v_t)_\nu\le u_\nu.
\]
The tangential derivatives are fixed by the boundary data.
Hence
\[
 \sup_{0\le t\le1}\norm{Dv_t}_{L^\infty(D)}<\infty.
\]
This constant may depend on all derivatives of the fixed smooth data and on \(P^{-1}\); it is used only for existence of this one Dirichlet problem.

We now bound all boundary second derivatives.
Because \(A\) is compactly supported, there is a fixed open collar of \(\partial D\) in which every path member satisfies
\[
 G(D^2v_t)=g.
\]
Twice differentiating the common boundary trace tangentially, and using the gradient bound, controls all tangential--tangential derivatives.

We include the mixed-derivative barrier.
Let \(T\) be an infinitesimal rotation tangent to the sphere and put
\[
 \phi=T(v_t-u),\qquad W=v_t-\underline u.
\]
In the fixed-equation collar, the orthogonal invariance of \(G\) gives
\[
 G^{ij}(D^2v_t)(Tv_t)_{ij}=Tg.
\]
Indeed the commutator terms obtained by differentiating the rotation cancel the corresponding terms in \(D^2(Tv_t)\).
Write \(\mathcal L=G^{ij}(D^2v_t)\partial_{ij}\) there.
Concavity and the equation for \(u\) give \(\mathcal L(u-v_t)\ge0\), and therefore
\[
 \mathcal LW\le-2\kappa\sum_iG^{ii}.
\]
The positive density window and Newton's inequality imply \(1\le C\sum_iG^{ii}\), while the fixed smooth data give
\[
 \abs{\mathcal L\phi}\le C\left(1+\sum_iG^{ii}\right).
\]
On the outer edge of the collar, \(\phi=W=0\).
Choose the artificial inner edge strictly inside the collar.
There the gradient bound controls \(\phi\), whereas
\[
 W=v_t-u+\kappa(R^2-\abs{x-x_D}^2)
\]
has a fixed positive lower bound.
Increasing \(C\), the functions \(\pm\phi-CW\) are nonpositive on both edges and have nonnegative \(\mathcal L\)-image.
The maximum principle gives
\[
 \abs{T(v_t-u)}\le C(v_t-\underline u)
\]
throughout the collar.
Divide by the inward distance and approach the outer boundary.
This bounds the tangential--normal derivatives; derivatives of the rotation coefficients contain only first derivatives already bounded.

It remains to control the double normal derivative.
In an orthonormal tangential--normal frame, write
\[
 D^2v_t=\begin{pmatrix}B_t&p_t\\p_t^T&r_t\end{pmatrix}.
\]
The function \(q=v_t-w_0\) is nonpositive in \(D\), vanishes on the boundary, and hence satisfies \(q_\nu\ge0\) for the outward normal.
Since the second fundamental form of the sphere is positive, twice tangentially differentiating \(q=0\) gives
\[
 D^2_{\tan}q=q_\nu\,\mathrm{II}\ge0.
\]
Thus \(B_t\ge B_0=D^2_{\tan}w_0\), and
\[
 \tr B_t\ge\tr B_0
 =\Slin(D^2w_0)(\nu,\nu)\ge c_0>0.
\]
Here \(c_0\) may depend on the fixed solution \(w_0\).
The block identity
\[
 \sigma_2\!\begin{pmatrix}B&p\\p^T&r\end{pmatrix}
 =\sigma_2(B)+r\tr B-\abs{p}^2
\]
and the fixed equation in the collar give
\[
 r_t=\frac{g^2-\sigma_2(B_t)+\abs{p_t}^2}{\tr B_t}.
\]
Every term on the right is now bounded, so the path has a uniform boundary \(C^2\) bound.

We next obtain the interior Hessian bound.
For a unit vector \(\xi\), consider
\[
 Q_\xi=(v_t)_{\xi\xi}+C(\underline u-v_t).
\]
At an interior maximum where \(\Delta v_t<\tau_*P\), the cone inequality \(\lambda_{\max}<\Delta v_t\) directly bounds \((v_t)_{\xi\xi}\).
If \(\Delta v_t\ge\tau_*P\), then the trace cutoff is on its constant plateau and, through second order,
\[
 G(D^2v_t)=h_t:=g-tA.
\]
This remains true at the endpoint because all derivatives of \(\eta\) vanish there.
Twice differentiating and using the concavity of \(G\) gives
\[
 \mathcal L_t(v_t)_{\xi\xi}
 =(h_t)_{\xi\xi}
 -G^{ij,rs}(v_t)_{ij\xi}(v_t)_{rs\xi}
 \ge-\norm{D^2h_t}_{L^\infty(D)}.
\]
After \(C\) is chosen large enough, the strict inequality \(\mathcal L_t(\underline u-v_t)\ge1\) contradicts \(\mathcal L_tQ_\xi\le0\) at a large interior maximum.
Together with the boundary \(C^2\) bound, this proves
\[
 \sup_{0\le t\le1}\norm{D^2v_t}_{L^\infty(D)}<\infty.
\]
The argument first bounds all directional second derivatives from above and therefore bounds the trace.
The lower eigenvalue bound in \eqref{eq:eigenvalue-bounds} then bounds the Hessian from below.
The transition region is never differentiated: there the direct trace bound applies.

We finish the continuity argument by proving openness, closedness, and uniqueness.
The equation gives
\[
 G(D^2v_t)=g-tA\eta(\Delta v_t/P)\ge g-A\ge b>0.
\]
The Hessians therefore stay in a compact subset of \(\Gtwo\), and the linearized operators are uniformly elliptic.
Linear Dirichlet Schauder theory gives openness.

For closedness, choose a compact matrix set containing the Hessians of all path members and extend \(\mathcal F_t(x,\cdot)\) by the infimum of its supporting planes over that set.
The extension is globally concave and uniformly elliptic, with uniformly H\"older \(x\)-dependence.
Interior Evans--Krylov estimates apply away from the boundary.
In a smaller outer half-collar, where \(A=0\), the fixed-equation boundary \(C^{2,\beta}\) estimate applies using the boundary \(C^2\) bound already proved.
Interior balls overlap the inner edge of that collar, so these estimates cover \(\overline D\).
Higher regularity follows by linear Schauder bootstrapping.
Thus the path is closed and reaches \(t=1\).

If \(v\) and \(\widehat v\) are two solutions, integrate the Hessian linearization of \(\mathcal F_1\) along the segment joining their Hessians.
The cone \(\Gtwo\) is convex, so the resulting coefficient matrix is positive definite.
The maximum principle applied to \(v-\widehat v\), which vanishes on the boundary, proves uniqueness.
\end{proof}

The smooth lower approximation used in the construction is chosen as follows.
\begin{lemma}\label{lem:lower-mollification}
Let \(V\Subset V'\) be bounded domains, let \(0<\alpha<1\), and suppose
\[
 g\in C^\alpha(\overline {V'}),\qquad 0<g_0\le g\le g_1.
\]
For all sufficiently small \(\eps>0\), there is \(b_\eps\in C^\infty(\overline V)\) such that
\begin{equation}
 0<\frac{g_0}{2}\le b_\eps\le g\quad\hbox{on }V,
 \qquad
 \delta_\eps:=\norm{g-b_\eps}_{L^\infty(V)}\le C\eps^\alpha,
\label{eq:mollification-error}
\end{equation}
and
\begin{equation}
 L_\eps:=\norm{D(b_\eps^2)}_{L^\infty(V)}
       \le C\eps^{\alpha-1},
 \qquad
 J_\eps:=\norm{D^2(b_\eps^2)}_{L^\infty(V)}
       \le C\eps^{\alpha-2}.
\label{eq:mollification-derivatives}
\end{equation}
Here \(C\) depends only on the displayed bounds for \(g\), the H\"older seminorm of \(g\), and \(V\Subset V'\).
\end{lemma}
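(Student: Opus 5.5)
The plan is to mollify \(g\) and subtract a constant multiple of \(\eps^\alpha\), so that the result lies below \(g\). Fix \(d=\dist(V,\partial V')>0\) and a standard nonnegative mollifier \(\rho\in C_c^\infty(B_1)\) with \(\int\rho=1\). For \(0<\eps<d\) set \(g_\eps=g*\rho_\eps\), which is defined and smooth on a neighborhood of \(\overline V\). Write \([g]_\alpha\) for the H\"older seminorm of \(g\) on \(V'\). The Hölder bound gives, for \(x\in V\),
\[
 \abs{g_\eps(x)-g(x)}\le\int\rho_\eps(y)\abs{g(x-y)-g(x)}\,dy\le[g]_\alpha\eps^\alpha .
\]
Define
\[
 b_\eps=g_\eps-[g]_\alpha\eps^\alpha .
\]
Then \(b_\eps\le g\) on \(V\), and \(0\le g-b_\eps\le2[g]_\alpha\eps^\alpha\), which is \eqref{eq:mollification-error} for \(\delta_\eps\). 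Since \(g_\eps\ge g_0\), we also get \(b_\eps\ge g_0-[g]_\alpha\eps^\alpha\ge g_0/2\) once \(\eps\) is small enough that \([g]_\alpha\eps^\alpha\le g_0/2\). This is the meaning of ``sufficiently small''.

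For the derivative bounds, use \(\int D\rho_\eps=0\) and \(\int D^2\rho_\eps=0\) to write
\[
 D^kg_\eps(x)=\int D^k\rho_\eps(y)\bigl(g(x-y)-g(x)\bigr)\,dy .
\]
Since \(\norm{D^k\rho_\eps}_{L^1}\le C\eps^{-k}\) and the bracket is at most \([g]_\alpha\eps^\alpha\), this gives \(\abs{Dg_\eps}\le C\eps^{\alpha-1}\) and \(\abs{D^2g_\eps}\le C\eps^{\alpha-2}\) on \(V\). The estimates for \(b_\eps^2\) then follow from the product rule. Since \(Db_\eps=Dg_\eps\), we have \(D(b_\eps^2)=2b_\eps Db_\eps\) and \(D^2(b_\eps^2)=2b_\eps D^2b_\eps+2Db_\eps\otimes Db_\eps\). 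Using \(b_\eps\le g_1\), we get \(L_\eps\le C\eps^{\alpha-1}\) and
\[
 J_\eps\le C\eps^{\alpha-2}+C\eps^{2\alpha-2}\le C\eps^{\alpha-2}
\]
for \(\eps\le1\). Every constant depends only on \(g_0,g_1,[g]_\alpha,\alpha\), the fixed mollifier, and \(d\).

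No step here is a real obstacle. The two points that need care are the sign of the constant shift, which ensures \(b_\eps\le g\) pointwise rather than only approximately, and the cancellation \(\int D^k\rho_\eps=0\). That cancellation is what yields the gain \(\eps^\alpha\) in the derivative bounds, rather than the crude \(\eps^{-k}\) one would get from \(\norm g_{L^\infty}\) alone.
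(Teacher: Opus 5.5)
Your proposal is correct and follows essentially the same route as the paper's proof. Like the paper, you mollify \(g\), subtract a multiple of \(\eps^\alpha\) so that \(b_\eps\le g\), bound \(D^kb_\eps\) by \(C\eps^{\alpha-k}\), and finish with the product rule and \(2\alpha-2>\alpha-2\). The differences are cosmetic: you take \(\eps<\dist(V,\partial V')\) instead of extending \(g\), you fix the shift constant as \([g]_\alpha\), and you spell out the cancellation \(\int D^k\rho_\eps=0\) that the paper cites as a standard convolution estimate.
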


\begin{proof}
Extend \(g\) from \(V'\) to a \(C^\alpha\) function on a neighborhood of \(\overline V\), and let \(\rho_\eps\) be a standard nonnegative mollifier.
For a fixed sufficiently large \(C_0\), set \(b_\eps=\rho_\eps*g-C_0\eps^\alpha\).
The standard convolution estimates give \eqref{eq:mollification-error}.
They also give \(\abs{D^k b_\eps}\le C\eps^{\alpha-k}\) for \(k=1,2\).
The product rule and \(2\alpha-2>\alpha-2\) then give \eqref{eq:mollification-derivatives}.
\end{proof}
Suppose a sequence with unbounded trace has produced points \(z_j\to z_*\) and sets \(E_j\subset B_{CR_j}(z_j)\), where \(R_j\to0\).
Choose, in this order, fixed concentric balls
\begin{equation}
 U\Subset D_{\rm cmp}\Subset D_{\rm flat}\Subset D\Subset B_2,
\label{eq:nested-domains}
\end{equation}
centered at \(z_*\), and then choose
\begin{equation}
 \zeta\in C_c^\infty(D),\qquad 0\le\zeta\le1,\qquad
 \zeta=1\quad\hbox{on a neighborhood of }\overline {D_{\rm flat}}.
\label{eq:fixed-cutoff}
\end{equation}
For all large \(j\), \(E_j\Subset U\).
Notice the order of the quantifiers: the cutoff has been fixed before any modified solution and therefore before the comparison component, which will be contained in \(D_{\rm cmp}\).

For each fixed choice of \(\eps\) and \(P\), \cref{thm:dirichlet-problem} now supplies the required smooth comparison solution.

\subsection{A lower trace bound}

The next proposition transfers the lower trace bound from the original solution to the comparison solution.

\begin{proposition}\label{prop:comparison-lower-trace}
Let \(u_j,M_j,E_j\) be as in \cref{prop:large-trace-set}. Define \(S_j:=M_jR_j^{n-1}h_j\).
Let \(b_j\le g_j=\sqrt{f_j}\), set
\[
 \delta_j=\norm{g_j-b_j}_{L^\infty(D)},
\]
and let \(v_j\) solve \eqref{eq:adaptive-dirichlet} with parameter \(P_j\).
Assume that \(\norm{v_j}_{L^\infty(D)}\) is uniformly bounded and
\[
 \frac{M_j\delta_j^2}{P_jS_j}\longrightarrow0.
\]
Then, for all sufficiently large \(j\), there is a measurable set \(E'_j\subset E_j\) such that
\[
 \abs{E'_j}\ge\frac12\abs{E_j},\qquad
 \Delta v_j\ge cM_j\quad\hbox{on }E'_j.
\]
\end{proposition}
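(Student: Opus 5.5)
The plan is to combine the $L^2$ comparison estimate announced in the introduction (\cref{lem:reverse-cs,lem:comparison-l2}) with a Chebyshev-type argument on $E_j$. The comparison estimate reads
\[
 \int_D\dd(D^2u_j,D^2v_j)^2\le C\frac{\delta_j^2}{P_j}.
\]
Here $\dd$ is a distance on $\Gtwo$ that dominates the difference of traces in a scale-sensitive way. I expect the relevant lower bound to be of the form
\[
 \dd(A,B)^2\ge c\,\frac{(\tr A-\tr B)^2}{\tr A+\tr B},
\]
or a similar bound comparing $\sqrt{\tr A}$ and $\sqrt{\tr B}$. That is the natural shape produced by testing the difference of the two equations against $(D^2u-D^2v)$ with the linearization $\Slin$, which has size of order the trace.

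\textbf{Step 1.} I would derive a pointwise consequence of $\dd$. On the bad set
\[
 F_j=\{x\in E_j:\Delta v_j<cM_j\},
\]
with $c$ at most half the constant $c_0$ of \cref{prop:large-trace-set}, we have $\Delta u_j\ge c_0M_j$ and $\Delta v_j<c_0M_j/2$. The difference of traces is then at least $c_0M_j/2$, while the sum is comparable to $\Delta u_j\le 2M_j$ on the stopping ball. So $\dd(D^2u_j,D^2v_j)^2\ge c'M_j$ on $F_j$. If $\dd$ is instead a square-root-type distance, the same conclusion holds, since $\sqrt{c_0M_j}-\sqrt{c_0M_j/2}\gtrsim\sqrt{M_j}$.

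\textbf{Step 2.} Chebyshev's inequality gives
\[
 c'M_j\abs{F_j}\le C\frac{\delta_j^2}{P_j}.
\]
Using $\abs{E_j}\ge cR_j^{n-1}h_j=cS_j/M_j$, this yields
\[
 \frac{\abs{F_j}}{\abs{E_j}}\le C\frac{\delta_j^2}{P_jM_j\abs{E_j}}\le C\frac{M_j\delta_j^2}{M_jP_jS_j}\cdot\frac{M_j}{M_j}.
\]
I need to track the powers carefully so that this is exactly the hypothesis $M_j\delta_j^2/(P_jS_j)\to0$. The right-hand side should be $C\delta_j^2/(P_jS_j)$ times a factor that is at most $M_j$. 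In fact, with $\dd^2\gtrsim M_j$ we get $\abs{F_j}\le C\delta_j^2/(P_jM_j)$ and $\abs{E_j}\gtrsim S_j/M_j$, so the ratio is at most $C\delta_j^2/(P_jS_j)$, which is even better than the hypothesis. If the metric bound only gives $\dd^2\gtrsim1$ on $F_j$, for instance a normalized distance, the ratio becomes $CM_j\delta_j^2/(P_jS_j)$, which is exactly the stated hypothesis. That suggests this weaker version is the intended form, and it still suffices. In either case, for large $j$ we get $\abs{F_j}\le\frac12\abs{E_j}$. We then set $E'_j=E_j\setminus F_j$, which is measurable because both $E_j$ and $\{\Delta v_j<cM_j\}$ are.

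\textbf{Main obstacle.} The real work is the comparison inequality itself and the pointwise lower bound for $\dd$ in terms of the trace gap. The inequality must hold with constants independent of $P_j$, $\eps$, and the unbounded Hessians. The uniform $L^\infty$ bound on $v_j$ enters there, through integration by parts against boundary values shared with $u_j$. To prove it, I would subtract the two equations,
\[
 G(D^2u)-G(D^2v)=A\eta(\Delta v/P),
\]
multiply by $(v-u)$, which vanishes on $\partial D$, and use the concavity of $G$ together with the divergence structure of $\Slin$ (its rows are divergence free) to integrate by parts. The right-hand side is then controlled via $\eta(t)^2\le C_\eta t$ and a reverse Cauchy--Schwarz inequality, which produces the factor $\delta_j^2/P_j$. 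Given those lemmas as stated earlier in the paper, the proposition reduces to the Chebyshev argument above.
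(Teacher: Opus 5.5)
Your second branch, where $\dd^2\gtrsim 1$ on the bad set, is exactly the paper's proof: \cref{lem:comparison-l2} gives $\int_D\dd(D^2u_j,D^2v_j)^2\le C\delta_j^2/P_j$, the trace gap \eqref{eq:trace-gap} gives $\dd^2\ge c_L$ on the bad set, and Chebyshev together with $\abs{E_j}\ge cS_j/M_j$ bounds the bad set by $C\frac{M_j\delta_j^2}{P_jS_j}\abs{E_j}=o(\abs{E_j})$. Discard the first branch, because $\dd(A,tA)=0$ for every $t>0$, so no bound like $\dd^2\gtrsim(\tr A-\tr B)^2/(\tr A+\tr B)$ can hold; correspondingly the threshold must be $\kappa c_0M_j$ with $\kappa=\kappa(n,m_0,m_1)$ from \cref{lem:reverse-cs}, not $c_0M_j/2$, and applying that lemma requires $G(D^2u_j)=g_j$ and $G(D^2v_j)\in[b_j,g_j]$ to lie in a fixed positive interval, i.e.\ a uniform lower bound on $b_j$ as in \cref{lem:lower-mollification}.
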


The transfer uses the quadratic polarization of \(\sigma_2\).
For \(A,B\in\operatorname{Sym}(n)\), define
\begin{equation}
 \ip AB:=\frac12\big((\tr A)(\tr B)-\langle A,B\rangle\big).
\label{eq:polarization}
\end{equation}
Thus \(\ip AA=\sigma_2(A)\).
The following is the corresponding reverse Cauchy--Schwarz inequality on \(\Gamma_2\).

\begin{lemma}\label{lem:reverse-cs}
If \(A,B\in\Gtwo\), then
\begin{equation}
 \ip AB\ge G(A)G(B).
\label{eq:reverse-cs}
\end{equation}
Consequently
\[
 \dd(A,B)^2:=2\big(\ip AB-G(A)G(B)\big)\ge0.
\]
Fix \(0<m_0\le m_1<\infty\).
There exist \(\kappa,c_L>0\), depending only on \(n,m_0,m_1\), such that if
\[
 m_0\le G(A),G(B)\le m_1,\qquad
 \tr A\ge M,\qquad \tr B\le\kappa M,
\]
then
\begin{equation}
 \dd(A,B)^2\ge c_L.
\label{eq:trace-gap}
\end{equation}
\end{lemma}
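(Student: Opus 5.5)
The plan is to derive \eqref{eq:reverse-cs} from the concavity and degree-one homogeneity of \(G\) in \cref{lem:newton}, and then get \eqref{eq:trace-gap} from a sharpened, explicit form of the same inequality together with the scaling invariance of \(\dd\).

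\emph{Step 1: reverse Cauchy--Schwarz.} Concavity and homogeneity of degree one make \(G\) superadditive on the convex cone \(\Gtwo\):
\[
G(A+B)=2G\bigl(\tfrac12A+\tfrac12B\bigr)\ge G(A)+G(B).
\]
Both sides are positive, so we may square. Using \(\sigma_2(A+B)=\sigma_2(A)+2\ip AB+\sigma_2(B)\), which follows from \eqref{eq:polarization}, the terms \(\sigma_2(A)=G(A)^2\) and \(\sigma_2(B)=G(B)^2\) cancel. What remains is exactly \(\ip AB\ge G(A)G(B)\), and hence \(\dd(A,B)^2\ge0\).

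\emph{Step 2: a quantitative lower bound.} I will use the elementary identity \(\abs{X}^2=(\tr X)^2-2\sigma_2(X)\) from \eqref{eq:newton-positive}. Let \(X,Y\in\Gtwo\) have the same trace \(T\), and write \(a=G(X)\), \(b=G(Y)\). Then \(\abs X^2=T^2-2a^2\) and \(\abs Y^2=T^2-2b^2\). By Cauchy--Schwarz and the AM--GM inequality,
\[
\langle X,Y\rangle\le\sqrt{(T^2-2a^2)(T^2-2b^2)}\le T^2-a^2-b^2.
\]
It follows that
\[
\dd(X,Y)^2=T^2-\langle X,Y\rangle-2ab\ge a^2+b^2-2ab=(a-b)^2.
\]

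\emph{Step 3: scaling.} By homogeneity and bilinearity, \(\ip{A/s}{sB}=\ip AB\) and \(G(A/s)G(sB)=G(A)G(B)\), so \(\dd(A/s,sB)=\dd(A,B)\) for every \(s>0\). I choose \(s=\sqrt{\tr A/\tr B}\), so that \(A/s\) and \(sB\) have equal traces. The hypotheses \(\tr A\ge M\) and \(\tr B\le\kappa M\) give \(s\ge\kappa^{-1/2}\). Step 2 then yields
\[
\dd(A,B)^2\ge\bigl(sG(B)-G(A)/s\bigr)^2.
\]
The map \(s\mapsto sG(B)-G(A)/s\) is increasing, so for \(s\ge\kappa^{-1/2}\) we get
\[
sG(B)-G(A)/s\ge\kappa^{-1/2}m_0-\kappa^{1/2}m_1.
\]
Choosing \(\kappa=m_0/(2m_1)\), this quantity is at least \(\tfrac12\kappa^{-1/2}m_0\). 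Hence \(\dd(A,B)^2\ge m_0^2/(4\kappa)=m_0m_1/2=:c_L\), which depends only on \(m_0,m_1\).

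The only delicate point is Step 2. The naive bound via \(\lambda_{\min}\bigl(\Slin(B)\bigr)\) loses a factor of \(M\). The equal-trace normalization of Step 3 is what turns the trace disparity into a disparity between the \(G\)-values, and that disparity is exactly what the Cauchy--Schwarz computation of Step 2 detects.
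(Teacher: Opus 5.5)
Your proof is correct, and it takes a different route from the paper in both parts.

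\emph{Reverse Cauchy--Schwarz.} The paper does not use concavity of \(G\). It writes \(A=T\bigl(\frac In+\sqrt{\frac{n-1}{n}}\,\Theta\bigr)\) and \(B=S\bigl(\frac In+\sqrt{\frac{n-1}{n}}\,\Xi\bigr)\) with \(\Theta,\Xi\) trace-free. It then computes \(\ip AB=\frac{n-1}{2n}TS\bigl(1-\langle\Theta,\Xi\rangle\bigr)\) and applies the scalar inequality \(1-\abs\Theta\abs\Xi\ge\sqrt{(1-\abs\Theta^2)(1-\abs\Xi^2)}\). That argument is self-contained; combined with homogeneity it in fact reproves the concavity of \(G\). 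Your superadditivity argument is shorter, but it takes that concavity from \cref{lem:newton} as an input.

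\emph{Trace gap.} The paper normalizes by \(G\) rather than by the trace. With \(\widehat A=A/G(A)\) and hyperbolic angles \(r=\operatorname{arctanh}\abs\Theta\), \(s=\operatorname{arctanh}\abs\Xi\), it shows \(\ip{\widehat A}{\widehat B}\ge\cosh(r-s)\) and \(\tr\widehat A/\tr\widehat B\le e^{\abs{r-s}}\). So a large trace ratio forces a large hyperbolic separation, which bounds \(\dd\) from below. You instead use the invariance \(\dd(A/s,sB)=\dd(A,B)\) to equalize traces, and then the equal-trace bound \(\dd(X,Y)^2\ge(G(X)-G(Y))^2\). That bound is also immediate from \cref{lem:point-square}: \(\tr(X-Y)=0\) gives \(\sigma_2(X-Y)=-\frac12\abs{X-Y}^2\), hence the exact identity \(\dd(X,Y)^2=(G(X)-G(Y))^2+\frac12\abs{X-Y}^2\). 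Your Cauchy--Schwarz and AM--GM step just discards the last term.

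\emph{What each buys.} Your route gives explicit, dimension-free constants \(\kappa=m_0/(2m_1)\) and \(c_L=m_0m_1/2\) in a few lines. The paper's route gives a geometric reading of \(\dd\) as controlling a hyperbolic distance between normalized matrices, from which the constants follow but are left implicit.
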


\begin{proof}
Write
\[
 A=T\left(\frac In+\sqrt{\frac{n-1}{n}}\,\Theta\right), \quad B= S \left(\frac In+\sqrt{\frac{n-1}{n}}\,\Xi\right),
 \quad \tr\Theta = \tr \Xi =0,
\]
with $T = \tr A, S = \tr B$.
We calculate
\[
 \sigma_2(A)=\frac{n-1}{2n}T^2(1-\abs{\Theta}^2).
\]
Since \(A, B\in\Gtwo\), we also have \(\abs\Theta<1, \abs\Xi<1\) and
\[
 \ip AB=\frac{n-1}{2n}TS\big(1-\langle\Theta,\Xi\rangle\big).
\]
Since
\[
 1-\langle\Theta,\Xi\rangle\ge 1-\abs\Theta\abs\Xi
 \ge\sqrt{(1-\abs\Theta^2)(1-\abs\Xi^2)},
\]
we obtain \eqref{eq:reverse-cs}.

For the last assertion, write $\widehat A = A / G(A),\, \widehat B = B / G(B)$, and set \(r=\operatorname{arctanh}\abs\Theta, s=\operatorname{arctanh}\abs\Xi\).
The displayed formulas imply
\[
 \ip{\widehat A}{\widehat B}\ge\cosh(r-s),
 \qquad
 \frac{\tr\widehat A}{\tr\widehat B}
 =\frac{\cosh r}{\cosh s}\le e^{\abs{r-s}}.
\]
Thus an upper bound for \(\dd(A,B)\) bounds the ratio of the normalized traces.
Since \(m_0\le G(A),G(B)\le m_1\), a sufficiently small \(\kappa\) implies \eqref{eq:trace-gap}.
\end{proof}

We also need its pointwise and integrated quadratic consequences.
\begin{lemma}\label{lem:point-square}
For \(A,B\in\Gtwo\),
\[
 \dd(A,B)^2
 =\big(G(A)-G(B)\big)^2-\sigma_2(A-B).
\]
\end{lemma}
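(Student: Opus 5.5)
This is a direct algebraic identity, and the plan is to expand both sides using the polarization \eqref{eq:polarization}. Since \(\ip{\cdot}{\cdot}\) is a symmetric bilinear form with \(\ip AA=\sigma_2(A)\), expanding \(\sigma_2(A-B)=\ip{A-B}{A-B}\) gives
\[
 \sigma_2(A-B)=\sigma_2(A)+\sigma_2(B)-2\ip AB .
\]

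Next, on \(\Gtwo\) we have \(\sigma_2(A)=G(A)^2\) and \(\sigma_2(B)=G(B)^2\). Hence
\[
 \big(G(A)-G(B)\big)^2-\sigma_2(A-B)
 =G(A)^2-2G(A)G(B)+G(B)^2-G(A)^2-G(B)^2+2\ip AB .
\]
This simplifies to \(2\big(\ip AB-G(A)G(B)\big)\), which is \(\dd(A,B)^2\) by the definition in \cref{lem:reverse-cs}.

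The only point needing any care is checking that \(\ip\cdot\cdot\) is genuinely bilinear and symmetric. Both properties are clear from \eqref{eq:polarization}, since the trace product and the Frobenius product each have them. Admissibility is used only to write \(\sigma_2=G^2\); no step uses concavity or the reverse Cauchy--Schwarz inequality. There is no real obstacle here. A useful consequence to record afterwards: integrated over \(D\) with \(A=D^2u\) and \(B=D^2v\) sharing boundary values, the term \(\sigma_2(D^2(u-v))\) is a null Lagrangian, so its integral vanishes. Presumably this is how the paper's \(L^2\) comparison lemma uses the identity.
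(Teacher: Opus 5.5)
Your proof is correct and is exactly the paper's argument: polarize \(\sigma_2(A-B)=\sigma_2(A)+\sigma_2(B)-2\ip AB\) and expand, using \(\sigma_2=G^2\) on \(\Gtwo\). One correction to your closing aside: if you only know \(u=v\) on \(\partial D\), the integral of \(\sigma_2(D^2(u-v))\) does not vanish; it equals \(\frac12\int_{\partial D}H_{\partial D}(\partial_\nu w)^2\,dS\) (see \cref{prop:square}), which is merely nonnegative on the mean-convex ball, and that sign is what the paper uses.
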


\begin{proof}
Use the quadratic polarization
\[
 \sigma_2(A-B)=\sigma_2(A)+\sigma_2(B)-2\ip AB
\]
and expand.
\end{proof}
\begin{proposition}\label{prop:square}
Let \(D\) be a \(C^2\) domain and let \(u,v\in C^3(\overline D)\) be admissible with \(u=v\) on \(\partial D\).
Set \(w=u-v\), and let \(H_{\partial D}\) be the sum of the principal curvatures with respect to the outward normal.
Then
\[
 \;
 \int_D\dd(D^2u,D^2v)^2
 +\frac12\int_{\partial D}H_{\partial D}(x)
       \big[\partial_\nu w(x)\big]^2\,dS(x)
 =\int_D\big(G(D^2u)-G(D^2v)\big)^2.\;
\]
If \(D\) is mean convex, the boundary term is nonnegative.
\end{proposition}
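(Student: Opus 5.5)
The plan is to reduce the identity to a statement about \(\sigma_2(D^2w)\) alone, and then prove that statement with a divergence identity. By \cref{lem:point-square}, applied pointwise with \(A=D^2u\) and \(B=D^2v\) (both in \(\Gtwo\) by admissibility),
\[
 \dd(D^2u,D^2v)^2=\big(G(D^2u)-G(D^2v)\big)^2-\sigma_2(D^2w),\qquad w=u-v .
\]
Here we use that \(\sigma_2(A-B)\) is the value at \(D^2w\). Integrating over \(D\), the claimed identity becomes equivalent to
\[
 \int_D\sigma_2(D^2w)=\frac12\int_{\partial D}H_{\partial D}\,(\partial_\nu w)^2\,dS .
\]
Admissibility enters only through \cref{lem:point-square}; the rest of the argument is linear-algebraic in \(w\).

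\emph{Divergence step.} Since \(w\in C^3(\overline D)\), I use the divergence structure of \(\sigma_2\):
\[
 2\sigma_2(D^2w)=\partial_i\big(w_i\,\Delta w-w_j\,w_{ij}\big).
\]
To verify it, expand the right side. The term \(\partial_i(w_i\Delta w)\) gives \((\Delta w)^2+w_i\Delta w_i\). The term \(\partial_i(w_jw_{ij})\) gives \(|D^2w|^2+w_jw_{iij}\). The two third-order terms cancel, leaving \((\Delta w)^2-|D^2w|^2=2\sigma_2(D^2w)\). The divergence theorem on the \(C^2\) domain \(D\) then yields
\[
 2\int_D\sigma_2(D^2w)=\int_{\partial D}\big(w_\nu\,\Delta w-D^2w(Dw,\nu)\big)\,dS .
\]

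\emph{Boundary computation (the main point to get right).} Since \(w=0\) on \(\partial D\), we have \(Dw=w_\nu\nu\) there. Hence the flux equals \(w_\nu(\Delta w-w_{\nu\nu})\). I then use the standard decomposition of the Laplacian at the boundary,
\[
 \Delta w=w_{\nu\nu}+H_{\partial D}\,w_\nu+\Delta_{\partial D}w ,
\]
where \(H_{\partial D}\) is the mean curvature (sum of principal curvatures) with respect to the outward normal. Because \(w|_{\partial D}=0\), the tangential Laplacian term vanishes, so the flux is exactly \(H_{\partial D}w_\nu^2\). This requires only a \(C^2\) boundary and \(w\in C^2\) up to the boundary.

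The step I expect to need the most care is the sign and normalization of \(H_{\partial D}\). I will check it on an explicit example: on the unit ball with \(w=\abs x^2-1\), we have \(\Delta w=2n\), \(w_{\nu\nu}=2\) and \(w_\nu=2\), which forces \(H=n-1>0\), as expected for the outward convention. Combining the three displays gives the identity. When \(D\) is mean convex, \(H_{\partial D}\ge0\), so the boundary term is nonnegative.
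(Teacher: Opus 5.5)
Your proposal is correct and is essentially the paper's argument: you reduce to $\int_D\sigma_2(D^2w)$ via \cref{lem:point-square} and then integrate by parts using $2\sigma_2(D^2w)=\partial_i\big(\Slin(D^2w)^{ij}w_j\big)$, which is exactly the divergence-free property of $\Slin(D^2w)$ that the paper invokes. You also write out the boundary evaluation that the paper leaves implicit, namely $Dw=w_\nu\nu$ and $\Delta w-w_{\nu\nu}=H_{\partial D}w_\nu$ on $\partial D$, and your unit-ball check confirms the outward-normal sign convention.
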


\begin{proof}
By \cref{lem:point-square}, it remains to evaluate \(\int_D\sigma_2(D^2w)\).
Since \(\Slin(D^2w)\) is divergence free and \(w=0\) on \(\partial D\), integration by parts gives
\[
 2\int_D\sigma_2(D^2w)
 =\int_{\partial D}H_{\partial D}(x)
       \big[\partial_\nu w(x)\big]^2\,dS(x).
\]
This proves the identity.
\end{proof}
\begin{lemma}\label{lem:comparison-l2}
Let \(u\) and \(v\) have the same boundary values on the ball \(D\), with
\[
 G(D^2u)=g,\qquad
 G(D^2v)+\zeta(g-b)\eta(\Delta v/P)=g.
\]
Assume \(u,v\) are admissible, \(b\le g\), and \(\norm v_{L^\infty(D)}\le A_0\).
Then
\begin{equation}
 \int_D\dd(D^2u,D^2v)^2
 \le C\frac{\norm{g-b}_{L^\infty(D)}^2}{P},
\label{eq:comparison-integral}
\end{equation}
where \(C\) depends only on \(A_0\), \(\eta\), \(D\), and the fixed cutoff \(\zeta\).
\end{lemma}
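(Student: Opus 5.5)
The plan is to combine the integral identity of \cref{prop:square} with the structure of the modified equation, and then to absorb the trace through the cutoff inequality \(\eta(t)^2\le C_\eta t\) and an integration by parts against \(\zeta\).

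\textbf{Step 1: reduce to the equation defect.} Since \(D\) is a ball, it is mean convex, so the boundary term in \cref{prop:square} is nonnegative. The functions \(u\) and \(v\) are smooth up to \(\overline D\) in the setting of \cref{thm:dirichlet-problem}, so the identity applies and gives
\[
 \int_D\dd(D^2u,D^2v)^2\le\int_D\big(G(D^2u)-G(D^2v)\big)^2 .
\]
Subtracting the two equations gives
\[
G(D^2u)-G(D^2v)=\zeta(g-b)\,\eta(\Delta v/P).
\]
Using \(0\le\zeta\le1\), so that \(\zeta^2\le\zeta\), the integrand is at most
\[
\norm{g-b}_{L^\infty(D)}^2\,\zeta\,\eta(\Delta v/P)^2 .
\]

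\textbf{Step 2: trade \(\eta^2\) for the trace.} Admissibility gives \(\Delta v>0\), so the inequality \(\eta(t)^2\le C_\eta t\) applies with \(t=\Delta v/P\). This yields
\[
\int_D\dd(D^2u,D^2v)^2\le \frac{C_\eta\norm{g-b}_{L^\infty(D)}^2}{P}\int_D\zeta\,\Delta v .
\]
The integrand \(\zeta\Delta v\) is nonnegative, so no sign issues arise.

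\textbf{Step 3: remove the Laplacian.} Because \(\zeta\in C_c^\infty(D)\), integrating by parts twice gives no boundary terms:
\[
\int_D\zeta\,\Delta v=\int_D v\,\Delta\zeta\le A_0\norm{\Delta\zeta}_{L^1(D)} .
\]
The right-hand side depends only on \(A_0\) and the fixed cutoff \(\zeta\). Inserting this into Step 2 gives \eqref{eq:comparison-integral} with \(C=C_\eta A_0\norm{\Delta\zeta}_{L^1}\).

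I expect no serious obstacle; the lemma is a short consequence of earlier results. The only points to check are that the regularity required by \cref{prop:square} holds, which it does since both functions are smooth on \(\overline D\), and that the compact support of \(\zeta\) is used essentially. Compact support is what allows the unbounded quantity \(\Delta v\) to be controlled by the \(L^\infty\) norm of \(v\) alone. This is why the estimate needs no gradient or Hessian bound on \(v\), in keeping with the paper's approach of using only \(L^\infty\) control such as \eqref{eq:linfty-bound}.
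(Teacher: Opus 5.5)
Your proposal is correct and follows the paper's proof: it applies the identity of \cref{prop:square} on the mean-convex ball, then the bound \(\eta(t)^2\le C_\eta t\), then a double integration by parts against a compactly supported cutoff. The only difference is cosmetic: you use \(\zeta^2\le\zeta\) and integrate against \(\zeta\) itself, whereas the paper integrates against an auxiliary \(\chi\in C_c^\infty(D)\) equal to one on \(\supp\zeta\), and both give a constant depending only on \(A_0\), \(\eta\), and the fixed cutoff.
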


\begin{proof}
The ball is mean convex.
By \cref{prop:square} and the equation for \(v\),
\[
 \begin{aligned}
 \int_D\dd(D^2u,D^2v)^2
 &\le\int_D\zeta^2(g-b)^2\eta(\Delta v/P)^2\\
 &\le C\frac{\norm{g-b}_\infty^2}{P}
       \int_{\supp\zeta}\Delta v.
 \end{aligned}
\]
Choose a fixed nonnegative \(\chi\in C_c^\infty(D)\) which equals one on \(\supp\zeta\).
Admissibility gives \(\Delta v>0\), and two integrations by parts, with no boundary term, yield
\[
 \int_{\supp\zeta}\Delta v
 \le\int_D\chi\Delta v
 =\int_Dv\Delta\chi
 \le A_0\int_D\abs{\Delta\chi}.
\]
Substitution proves \eqref{eq:comparison-integral}.
In the application, the required \(L^\infty\) bound for \(v\) is \eqref{eq:linfty-bound}; no derivative estimate from the continuity method has entered.
\end{proof}

\begin{proof}[Proof of \cref{prop:comparison-lower-trace}]
By \cref{lem:comparison-l2},
\[
 \int_D\dd(D^2u_j,D^2v_j)^2\le C\frac{\delta_j^2}{P_j}.
\]
The values of \(G(D^2u_j)\) and \(G(D^2v_j)\) lie in a fixed positive interval.
Hence \eqref{eq:trace-gap} gives a fixed \(\kappa>0\) such that
\[
 \abs{E_j\cap\{\Delta v_j<\kappa cM_j\}}
 \le C\frac{\delta_j^2}{P_j}.
\]
Since \(\abs{E_j}\ge cS_j/M_j\), the last measure is \(o(\abs{E_j})\).
Removing this set from \(E_j\) proves the proposition.
\end{proof}

\subsection{An upper trace bound}

We next prove the complementary upper estimate.
The comparison component, the gradient estimate, and the Pogorelov-type calculation are kept as separate steps below.

\begin{proposition}\label{prop:comparison-upper-trace}
Let \(U\Subset D_{\rm cmp}\Subset D_{\rm flat}\Subset\{\zeta=1\}\) be the fixed domains above.
Suppose that \(v\) solves \eqref{eq:adaptive-dirichlet}, that \(\norm v_{L^\infty}\) is bounded, and that
\[
 \Delta v\ge cM\quad\hbox{at some point of }U,\qquad P=o(M).
\]
Put
\[
 L=\norm{D(b^2)}_\infty,\qquad J=\norm{D^2(b^2)}_\infty.
\]
There exist a smooth admissible function \(W\) and a smooth component \(U\Subset\Omega\Subset D_{\rm cmp}\) such that, with
\[
 K=1+\norm{Dv}_{L^\infty(\Omega)}
      +\norm{DW}_{L^\infty(\Omega)},
\]
one has
\[
 K\le C\left(1+P^{1/2}+L^{1/5}\right)
\]
and
\[
 \sup_U\Delta v
 \le C\left(1+K^2+KL+K\sqrt J\right).
\]
All constants depend only on the fixed data and domains.
\end{proposition}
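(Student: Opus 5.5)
Throughout, $v$ solves \eqref{eq:adaptive-dirichlet} with $\zeta=1$ on $D_{\rm flat}$. The plan follows \cite[Sections~3--4]{ChouWang2001}: build a section-type domain $\Omega$ on which a Pogorelov cutoff vanishes on the boundary, bound the gradient of $v$ on $\Omega$, and then run a Pogorelov maximum principle.

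\emph{Step 1: the comparison component.} For $W$ I take the solution of $G(D^2W)=c_*$ on a fixed ball $D'$ with $D_{\rm cmp}\Subset D'\Subset D_{\rm flat}$ and boundary values $v+\kappa_0$, for a fixed small $\kappa_0>0$. It exists by \cref{prop:dirichlet}, since $v$ is a strict subsolution; note $G(D^2v)\ge b>c_*$. Comparison gives $v\le W$ on $D'$, and $W-v=\kappa_0$ on $\partial D'$. The plan is to show $W-v\ge c$ on $U$ with $c$ independent of $M,P$; this is where the $L^\infty$ bound \eqref{eq:linfty-bound} and the uniform lower bound $G(D^2v)\ge b\ge g_0/2$ enter. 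Then let $\Omega$ be the component of $\{W-v>\kappa_0/2\}$ containing $U$. After perturbing the level to a regular value (Sard), $\Omega$ is smooth and $\Omega\Subset D_{\rm cmp}$, if $D'$ is chosen so that $W-v\le\kappa_0/2$ near $\partial D_{\rm cmp}$. I expect this localization to be the main obstacle. Separating $U$ from $\partial D_{\rm cmp}$ by a level set with constants independent of $M,P$ is not automatic, because the equation is only degenerate elliptic. My first attempt would be a barrier argument: compare with quadratic perturbations $W\pm\kappa|x-x_0|^2$, which remain admissible solutions with right side in a fixed window, and use the zeroth-order bounds only.

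\emph{Step 2: gradient bounds for $K$.} Since $W$ solves a constant equation with bounded boundary data, the standard interior gradient estimate for $\sigma_2$ bounds $DW$ on $\Omega$ in terms of $L^\infty$ norms. For $v$, I differentiate \eqref{eq:adaptive-dirichlet}:
\[
\mathcal L v_k=g_k-(g-b)_k\eta(\Delta v/P).
\]
Because $g$ is only H\"older, $g_k$ cannot be used directly. Instead I split into regions. Where $\Delta v\le\tau_*P$, the bound $|Dv|^2\lesssim\|v\|_\infty\sup|D^2v|$ (interpolation on a fixed ball) gives $|Dv|\lesssim P^{1/2}$. Where $\Delta v\ge\tau_*P$, the equation is $\sigma_2(D^2v)=b^2$, and Chou--Wang's gradient estimate with right side $b^2$ gives a bound involving $\|D(b^2)\|^{1/5}$-type powers after optimizing the test function $\log|Dv|+\phi(v)$. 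Combining the two regions yields $K\le C(1+P^{1/2}+L^{1/5})$.

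\emph{Step 3: Pogorelov estimate.} Set
\[
\Psi=(W-v-\kappa_0/2)^{\beta}\,\lambda_{\max}(D^2v)\,e^{\frac12|Dv|^2/K^2}
\]
on $\Omega$, with $\beta$ fixed; $\Psi$ vanishes on $\partial\Omega$. At an interior maximum with $\Delta v\ge cM\gg\tau_*P$, the cutoff $\eta$ is identically one nearby. So $v$ solves $G(D^2v)=b$ there, and only $b$ is differentiated, contributing $L,J$. The remaining computation is Chou--Wang's: third-order terms are absorbed using concavity and the Lipschitz bound $K$, and one obtains
\[
(W-v)^\beta\lambda_{\max}\le C(1+K^2+KL+K\sqrt J).
\]
If instead the maximum occurs where $\Delta v<\tau_*P$, then $\Psi\le CP=o(M)$ trivially. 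On $U$ the factor $W-v-\kappa_0/2\ge c$, which gives the stated bound on $\sup_U\Delta v$ (using $\Delta v<n\lambda_{\max}$, since $\lambda_{\max}<\Delta v$ bounds $\lambda_{\max}$ by the trace and conversely the trace is at most $n\lambda_{\max}$).
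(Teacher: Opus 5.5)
Your Step~1 cannot work as written. Since $W-\kappa_0$ solves $G(D^2(W-\kappa_0))=c_*$ with boundary values $v$ on $\partial D'$, while $G(D^2v)\ge b>c_*$, the comparison principle gives $v\le W-\kappa_0$, i.e.\ $W-v\ge\kappa_0$ on all of $D'$. Hence $\{W-v>\kappa_0/2\}=D'$, and the condition ``$W-v\le\kappa_0/2$ near $\partial D_{\rm cmp}$'' can never hold. So $\Omega=D'$, and your factor $(W-v-\kappa_0/2)^\beta\ge(\kappa_0/2)^\beta$ does not vanish on $\partial\Omega$. The maximum in Step~3 may then lie on $\partial D'$, where you have neither an interior gradient bound for $W$ nor the maximum-principle calculation. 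The shift makes the gap on $U$ trivial precisely by destroying the localization.

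In the working construction (\cref{lem:component}) there is no shift: $v_0$ solves $G(D^2v_0)=c_*$ with $v_0=v$ on $\partial D_{\rm cmp}$, so $q=v_0-v\ge0$ vanishes on the boundary. Separation from $\partial D_{\rm cmp}$ is then the easy half, since $0\le q\le h-v\le C\dist(x,\partial D_{\rm cmp})^\beta$, with $h$ the harmonic function with boundary values $v$ and the modulus of \cref{prop:modulus}. The hard half is the one your sketch skips: $q\ge c_0$ on $U$ with $c_0$ independent of $M$, $P$ and derivatives of $b$. This is not a zeroth-order fact. The linearization at $v$ has no ellipticity bounds independent of $M$, and quadratic perturbations of the constant-data solution stay admissible with controlled right side only if its Hessian is controlled. (A barrier of your kind does go through if you first invoke the Li--Wu interior Hessian bound for the constant equation. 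Then $L_{v_0}$ is uniformly elliptic on an interior ball $B_r(x_B)\supset U$, and $L_{v_0}q\le-c$, $q\ge0$ give $q\ge\kappa(r^2-\abs{x-x_B}^2)$.) The paper instead uses only $\abs{Dv_0}\le C$, in three steps:
\begin{itemize}
\item the identity $\Slin(D^2\tfrac{v+v_0}{2})^{ij}q_{ij}=c_*^2-G(D^2v)^2\le-c_1$, whose coefficients are divergence-free, is integrated twice by parts against a bump, giving a point where $q\ge c$;
\item the H\"older modulus of $v$ and the Lipschitz bound for $v_0$ spread this to a ball;
\item the Li--Wu barrier of \cref{prop:gap}, which differentiates only $v_0$, propagates it to $U$.
\end{itemize}
Then $\Omega$ is a component of $\{q>\delta\}$ at a regular value, and $W=v_0-\delta$.

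Step~2 has a second gap, in the low-trace case. The interpolation $\abs{Dv}^2\lesssim\norm{v}_\infty\sup\abs{D^2v}$ needs a Hessian bound on a whole ball around the point. But $\{\Delta v\le\tau_*P\}$ has no controlled inradius, and $\Delta v\ge cM$ somewhere in $U$. On a fixed ball the interpolation therefore involves $\sup\abs{D^2v}\ge cM$ and gives at best $\abs{Dv}\lesssim M^{1/2}$, hence $K^2\gtrsim M$ and no contradiction. Likewise, Chou--Wang's argument cannot be run on the region $\{\Delta v\ge\tau_*P\}$ alone. The split has to be made at the maximum point of a single test function, as in \cref{lem:gradient-bound}. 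For $\chi\Phi(v)v_\xi$ the first-order condition gives $v_{11}=-av_1^2-(\log\chi)_1v_1$, so either $\chi v_1^2\le C$ or $v_{11}\le-cv_1^2$. In the second case the cone inequality $\lambda_{\min}>-(n-2)\Delta v$ of \cref{lem:newton} gives $v_1^2\lesssim\Delta v<\tau_*P$ off the plateau. On the plateau it gives $\Sigma_F=(n-1)\Delta v\gtrsim v_1^2$, which together with $\Sigma_Fv_1^3\lesssim\abs{D(b^2)}$ yields $v_1^5\lesssim L$. Your Step~3 is essentially the paper's argument (\cref{lem:maximum-point,lem:hessian-bound-v}) once a correct $\Omega$ and $W$ are in place.
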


We first construct the comparison component, then control the gradient on the whole component, and finally apply the interior Hessian calculation.

\begin{proposition}\label{prop:modulus}
Let \(B_r\Subset B_R\) and let \(v\in C^\infty(B_R)\) be \(2\)-admissible.
If
\[
 \norm{v}_{L^\infty(B_R)}\le A,
 \qquad 0\le \sigma_2(D^2v)\le\Lambda,
\]
then there exist \(\beta\in(0,1)\) and \(C=C(n,A,\Lambda,r,R)\) such that
\begin{equation}
 [v]_{C^\beta(B_r)}
 =\sup_{\substack{x,y\in B_r\\x\ne y}}
   \frac{\abs{v(x)-v(y)}}{\abs{x-y}^\beta}
 \le C.
\label{eq:admissible-modulus}
\end{equation}
\end{proposition}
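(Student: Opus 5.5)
We would prove Proposition~\ref{prop:modulus} by a Hölder estimate for subharmonic functions with bounded Hessian measure. Admissibility gives $\Delta v>0$, so $v$ is subharmonic. The upper bound $\sigma_2(D^2v)\le\Lambda$ will be used through the $2$-Hessian measure.

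\textbf{Step 1: bound the Laplacian mass.} Take a cutoff $\chi\in C_c^\infty(B_R)$ with $\chi=1$ on $B_{r'}$, where $r<r'<R$. Integrating by parts twice,
\[
 \int_{B_{r'}}\Delta v\le\int\chi\Delta v=\int v\Delta\chi\le CA .
\]
This is the same argument as in \cref{lem:comparison-l2}.

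\textbf{Step 2: the route we choose.} The key fact is the Trudinger--Wang Hölder estimate for $k$-convex functions. For $k>n/2$, $k$-convex functions are Hölder with exponent $2-n/k$ from the $L^\infty$ bound alone. For $k=2$ this needs $n<4$, so in general dimension it is not enough. For $k\le n/2$ one genuinely needs the bound on $\sigma_2$, so we use the equation and argue by comparison.

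\textbf{Step 3: an upper modulus at each point.} Fix $x_0\in B_r$ and $\rho<\rho_0$. Let $w$ solve $\sigma_2(D^2w)=\Lambda$ in $B_\rho(x_0)$ with $w=v$ on the boundary; in practice we use the explicit radial supersolution described below. Comparison gives $w\le v$. Subharmonicity gives $v(x_0)\le\fint_{\partial B_\rho(x_0)}v$.

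This bounds $v$ from above by its averages. The oscillation, however, is controlled from below. We use the radial $\Gamma_2$-barrier
\[
 \phi(x)=\Lambda'\bigl(\abs{x-x_0}^2-\rho^2\bigr)+m,
\]
with $\Lambda'$ chosen so that $\sigma_2(D^2\phi)\ge\Lambda$. Comparison then yields $v\ge\phi$ whenever $v\ge m$ on $\partial B_\rho$. This controls how far $v$ can dip below its boundary minimum by $C\Lambda^{1/2}\rho^2$. So the downward oscillation is controlled, and the difficulty is the upward spike.

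\textbf{Step 4: upward spikes.} Here subharmonicity is the wrong direction: subharmonic functions can have $\log$-type singularities downward, not upward. Upward, the mean value inequality combined with Step 1 gives a Campanato-type decay. Set
\[
 \mu(\rho)=\int_{B_\rho(x_0)}\Delta v .
\]
The Riesz representation writes $v$ as a harmonic function plus the Newtonian potential of $\mu$, and the potential is concave-type upward. To get a power decay $\mu(B_\rho)\le C\rho^{n-2+\beta}$, we use the Hessian measure inequality. For admissible $v$, Maclaurin gives $\sigma_2\ge0$ but no lower bound on $\sigma_1$ in terms of $\sigma_2$; conversely, $\sigma_1^2\ge\frac{2n}{n-1}\sigma_2$ goes the wrong way. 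So we instead apply the Trudinger--Wang weak Harnack / mass-decay lemma for $2$-Hessian measures: bounded $\sigma_2$-measure together with an $L^\infty$ bound implies
\[
 \mu_1(B_\rho)\le C\rho^{n-2+\beta}.
\]
The expected proof of this lemma is an iteration comparing the $\Gamma_2$-capacity of balls with the $\sigma_1$-mass.

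\textbf{Step 5: conclude.} Morrey-type decay of the Riesz mass, with exponent $n-2+\beta$, gives Hölder continuity of the potential with exponent $\beta$. This yields \eqref{eq:admissible-modulus}.

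The main obstacle is Step 4, the decay $\mu_1(B_\rho)\lesssim\rho^{n-2+\beta}$ in dimensions $n\ge4$. We expect to cite the Hessian measure theory of Trudinger--Wang rather than reprove it.
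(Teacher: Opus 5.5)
\textbf{There is a genuine gap: Step~4, for $n\ge4$, asserts without proof a decay estimate that is essentially the proposition itself.}

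Your treatment of $n=2,3$ is fine and agrees with the paper in substance. Since $k=2>n/2$, the Trudinger--Wang estimate needs only the $L^\infty$ bound. The paper handles $n=3$ via their $W^{1,p}$ bound with $3<p<6$ plus Morrey, and $n=2$ via convexity.

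For $n\ge4$, Step~4 carries the entire content. You do not identify any result stating the decay $\mu_1(B_\rho(x))\le C\rho^{n-2+\beta}$. Moreover, that decay is equivalent to the proposition. If $v\in C^\beta$ near $\overline{B_r}$, take a cutoff $\chi_\rho$ equal to $1$ on $B_\rho(x)$, supported in $B_{2\rho}(x)$, with $\abs{D^2\chi_\rho}\le C\rho^{-2}$. Testing $\Delta v\ge0$ against it gives $\mu_1(B_\rho(x))\le\int(v-v(x))\Delta\chi_\rho\le C\rho^{n-2+\beta}$. Your Step~5 is the converse. So the argument reduces the statement to an equivalent one without supplying a mechanism.

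Nothing you actually establish produces a rate:
\begin{itemize}
\item Step~1 gives only $\mu_1(B_\rho)\le C$, with no decay.
\item The barrier of Step~3 gives $v(x_0)\ge\min_{\partial B_\rho(x_0)}v-C\Lambda^{1/2}\rho^2$. This does not control oscillation, because the minimum over the sphere is itself uncontrolled. Bounded subharmonic functions lose continuity precisely through such downward drops, so your claim that the downward direction is settled and only ``upward spikes'' remain is backwards.
\item The bound $\sigma_2\le\Lambda$ cannot be converted locally into control of $\Delta v$. The eigenvalues $(t,\dots,t,-\tfrac{n-2}{2}t)$ lie on $\partial\Gtwo$ with $\sigma_2=0$ and trace $nt/2$ for every $t>0$.
\end{itemize}
The density bound on $\mu_2[v]$ must therefore enter through genuinely nonlinear potential theory. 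Your suggested ``iteration comparing $\Gamma_2$-capacity with $\sigma_1$-mass'' does not say how.

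The missing ingredient is Labutin's Wolff-potential estimate for $k$-convex functions, which the paper invokes through his Corollary~3.4 with $k=2$. It controls $v$ pointwise by the Wolff potential of its Hessian measure,
\[
 W(x,\rho)=\int_0^\rho\left(\frac{\mu_2[v](B_t(x))}{t^{n-4}}\right)^{1/2}\frac{dt}{t}.
\]
The hypothesis gives $\mu_2[v](B_t)\le C\Lambda t^n$, hence $W(x,\rho)\le C\Lambda^{1/2}\rho^2$. The corollary converts exactly this growth condition into a H\"older modulus. With that input, the Riesz decomposition and your Steps~3--5 are unnecessary. Without it, or a proof of an equivalent nonlinear estimate, the case $n\ge4$ is not proved.
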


For \(n\ge4\), this follows from \cite[Corollary~3.4]{Labutin2002} with \(k=2\) and \(\mu_2[v](B_s)\le Cs^n\).
For \(n=3\), use \cite[Theorem~4.1]{TW1999} with \(3<p<6\), followed by Morrey's embedding.
For \(n=2\), use the interior slope bound for bounded convex functions.

\begin{proposition}\label{prop:gap}
Let \(v_0\) be a smooth admissible solution with constant data in a ball, and let \(v\) be admissible with a strictly larger right side.
Suppose
\[
 q=v_0-v\ge0,\qquad
 L_{v_0}q\le-c_0<0,
\]
where \(L_{v_0}=\Slin(D^2v_0)^{ij}\partial_{ij}\).
If \(q\ge\delta\) on a ball of fixed radius and \(\norm{Dv_0}\) is bounded on a fixed larger ball, then a finite chain of overlapping balls propagates a positive fraction of this gap to any prescribed compact set.
The constant depends on the gradient bound for \(v_0\), the balls, and \(c_0\), but not on \(Dv\) or derivatives of the right side of the equation for \(v\).
\end{proposition}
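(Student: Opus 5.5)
The proof is a Harnack chain driven by the weak Harnack inequality for nonnegative supersolutions. The operator \(L_{v_0}\) is uniformly elliptic, and the gradient bound on \(v_0\) is what makes this true.

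First we make \(L_{v_0}\) quantitatively elliptic on the larger ball. The function \(v_0\) is smooth and admissible with constant right side \(\sigma_2(D^2v_0)=c^2\). Pogorelov-type interior estimates for the constant equation, as in \cite{ChouWang2001}, bound \(\Delta v_0\) on any compactly contained ball. The constant depends only on \(c\), on \(\norm{Dv_0}\) on the fixed larger ball, and on the ball radii; the right side is constant, so no derivative of data enters. Together with \cref{lem:newton}, this confines \(D^2v_0\) to a compact subset of \(\Gtwo\). Hence \(\Slin(D^2v_0)\) has eigenvalues between fixed constants \(\lambda\le\Lambda\), and \(L_{v_0}\) is a linear operator in nondivergence form with measurable, uniformly elliptic coefficients on the region of interest.

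Next we take the given differential inequality. The hypothesis \(L_{v_0}q\le-c_0\) comes from quadratic polarization: \(\langle\Slin(D^2v_0),D^2(v_0-v)\rangle\) is controlled by \(\sigma_2(D^2v_0)-\sigma_2(D^2v)\) plus a term of sign fixed by the convexity of \(\Gtwo\). We only use it as stated. With \(q\ge0\) and \(L_{v_0}q\le0\), \(q\) is a nonnegative viscosity supersolution of \(\mathcal M^-_{\lambda,\Lambda}(D^2q)\le0\).

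Then we run the chain. Apply the weak Harnack inequality \cite[Chapter~4]{CC1995} on a ball \(B_{2\rho}(y)\) inside the region of ellipticity:
\[
\Bigl(\frac1{\abs{B_\rho}}\int_{B_\rho(y)}q^{p}\Bigr)^{1/p}\le C\inf_{B_{\rho/2}(y)}q .
\]
Suppose \(q\ge\delta\) on a ball \(B'\). If \(B_\rho(y)\) overlaps \(B'\) in a set of measure at least a fixed fraction of \(\abs{B_\rho}\), the left side is at least \(c\delta\), so \(q\ge c\delta\) on \(B_{\rho/2}(y)\). Now cover the prescribed compact set by a finite chain of balls of radius \(\rho\), with consecutive half-balls overlapping in a fixed proportion. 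The number of balls depends only on the geometry. Iterating gives \(q\ge c^N\delta\) on the compact set.

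None of these constants involve \(Dv\) or derivatives of the right side of the equation for \(v\). The only inputs are \(\lambda,\Lambda\) (from the gradient bound on \(v_0\)), the balls, and \(c_0\). The strict inequality \(L_{v_0}q\le-c_0\) is not even needed for the chain, but it rules out degenerate cases: it gives a quantitative strict gap via the barrier \(c_0(\rho^2-\abs{x-y}^2)/(2n\Lambda)\) where needed.

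The main obstacle is the first step. It needs an interior Hessian bound for \(v_0\) that depends only on its gradient, and all the ellipticity constants, hence the length of the chain, rest on it. I would invoke the Chou--Wang Pogorelov estimate with constant data directly, shrinking balls once.
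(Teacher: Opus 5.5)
Your Harnack-chain argument works once \(L_{v_0}\) is known to be uniformly elliptic on the region of interest. The step that is supposed to deliver this, however, does not hold.

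The estimate in \cite[Section~4]{ChouWang2001} is a Pogorelov estimate, not an interior one. It bounds a weighted quantity, such as \((-u)^{1+\eps}\abs{D^2u}\) for solutions vanishing on the boundary, or \(\rho^4v_{\xi\xi}\) as in \cref{lem:hessian-bound-v}. It is the vanishing of the weight on the boundary that forces the maximum of the test function into the interior. Shrinking balls does not produce such a weight: \(v_0\) has arbitrary boundary values on its ball. You have no function \(W\) with \(W-v_0>0\) inside and \(W-v_0=0\) on the boundary of a domain containing the target set.

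A bound for \(\Delta v_0\) on compact subsets in terms of \(\norm{Dv_0}\) alone is a genuine interior Hessian estimate for \(\sigma_2(D^2v_0)=\mathrm{const}\). That is exactly the hard problem surveyed in the introduction (Warren--Yuan for \(n=3\), Shankar--Yuan for \(n=4\), Li--Wu in general). For \(\sigma_k\) with \(k\ge3\), Pogorelov-type singular solutions with bounded gradient show that gradient-only interior Hessian bounds fail. In the paper the logic runs the other way: \cref{lem:component} uses \cref{prop:gap} precisely to manufacture the cutoff \(\rho=W-v\) that the Chou--Wang calculation needs. Without a Hessian bound, \(\Slin(D^2v_0)\) has no upper eigenvalue bound, and the weak Harnack inequality of \cite{CC1995} is not available.

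The missing idea is to work with the possibly degenerate operator directly. Following \cite[Lemmas~3.2--3.4]{LiWu2026}, the paper builds for each center \(y\) the barrier \(w_y=\exp((c_y-\phi_y)/\gamma_0)-1\). Here \(\phi_y\) is assembled from \((x-y)\cdot Dv_0-v_0+v_0(y)\), \(\abs{x-y}^2\) and \(\abs{Dv_0}^2\). The paper checks \(L_{v_0}w_y>0\) on \(\Omega_y\setminus B_r(y)\) using only the constant equation for \(v_0\), Newton's inequalities, and the gradient bound.

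Then \(L_{v_0}(q-\delta w_y/C_0)<0\) on that annular region, and \(q\ge\delta w_y/C_0\) on its boundary. The elementary minimum principle gives \(q\ge\theta\delta\) on \(B_{2r}(y)\); it needs positivity of the coefficient matrix but no ellipticity constants. A finite chain of such balls finishes the argument.

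To keep your structure, you would have to import a true interior Hessian estimate for the constant equation, such as the one the paper attributes to Li--Wu. That is a much deeper input than the proposition itself, and it is not what Chou--Wang provides.
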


\begin{proof}
We spell out the local barrier in the present notation.
A fixed dilation and multiplication of the dependent variable normalize the constant data and the relevant balls as in \cite[Lemmas~3.2--3.4]{LiWu2026}.
Put
\[
 {\cal L}=\Slin(D^2v_0)^{ij}\partial_{ij}.
\]
The propagation step uses only
\[
 \sigma_2(D^2v_0)=\hbox{constant},\qquad q\ge0,\qquad
 {\cal L}q\le-c_0,
\]
together with the interior bound for \(Dv_0\).

Fix a sufficiently small radius \(r\), depending only on the normalized geometry and that gradient bound.
For a point \(y\) in the target compact set, define
\[
 \phi_y(x)=2\big((x-y)\cdot Dv_0(x)-v_0(x)+v_0(y)\big)
       +\frac{\alpha_0}{2}\abs{x-y}^2-2\beta_0\abs{Dv_0(x)}^2.
\]
Choose, in order, \(\beta_0>0\) small, \(\alpha_0\) large, and \(\gamma_0>0\) small.
The calculation in \cite[Lemma~3.2]{LiWu2026}, which differentiates only \(v_0\), produces a regular value \(c_y\), a smooth domain
\[
 B_{2r}(y)\subset\Omega_y\Subset B_{4r}(y),
\]
and the barrier
\[
 w_y=\exp\big((c_y-\phi_y)/\gamma_0\big)-1
\]
with
\[
 w_y=0\quad\hbox{on }\partial\Omega_y,\qquad
 0<w_y\le C_0\quad\hbox{in }\Omega_y,\qquad
 w_y\ge c_1\quad\hbox{on }B_{2r}(y),
\]
and
\[
 {\cal L}w_y>0\quad\hbox{in }\Omega_y\setminus B_r(y).
\]
The three cases in the verification of the last inequality use the Newton inequalities and the constant equation for \(v_0\).
Neither the equation nor the gradient of \(v\) is differentiated.

Suppose \(q\ge\delta\) on \(B_r(y)\).
On the boundary of the component of \(\Omega_y\setminus B_r(y)\) which meets \(B_{2r}(y)\setminus B_r(y)\),
\[
 q\ge\frac{\delta}{C_0}w_y.
\]
Indeed this holds on the inner boundary by \(w_y\le C_0\), and on the outer boundary because \(w_y=0\) and \(q\ge0\).
Since \({\cal L}(q-\delta w_y/C_0)<0\), the minimum principle gives the same inequality throughout that component.
Consequently
\[
 q\ge\theta\delta\quad\hbox{on }B_{2r}(y),
 \qquad \theta=\min\{1,c_1/C_0\}>0.
\]
A finite chain of overlapping \(r\)-balls propagates this estimate from the seed ball to the prescribed compact set.
Every constant depends only on the quantities listed in the proposition.
\end{proof}

\begin{lemma}\label{lem:component}
Fix concentric nested balls
\[
 U\Subset D_{\mathrm{cmp}}\Subset D_{\mathrm{out}}.
\]
Suppose \(v\in C^\infty(D_{\mathrm{out}})\) is admissible and
\[
 0<\lambda\le G(D^2v)\le\Lambda,\qquad
 \norm{v}_{L^\infty(D_{\mathrm{out}})}\le A.
\]
There exist a smooth admissible \(W\), a smooth connected component \(\Omega\), and constants depending only on \(n,\lambda,\Lambda,A\) and the fixed domain gaps such that
\begin{equation}
 U\Subset\Omega\Subset D_{\mathrm{cmp}},
\label{eq:component-inclusions}
\end{equation}
\[
 \rho:=W-v>0\quad\text{in }\Omega,\qquad
 \rho=0\quad\text{on }\partial\Omega,
\]
\begin{equation}
 \rho\ge c_\rho>0\quad\text{on }U,\qquad
 0<\rho\le C_\rho,\qquad
 \norm{DW}_{L^\infty(\Omega)}\le C.
\label{eq:component-bounds}
\end{equation}
These constants do not depend on derivatives of the right side in the equation for \(v\).
\end{lemma}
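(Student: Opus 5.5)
The plan is to build \(W\) as a solution of a constant-data Dirichlet problem with slightly smaller right side, so that \(v\) is a strict subsolution relative to it. I then take \(\Omega\) to be a component of the positivity set of \(W-v\) after a small downward shift.

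\textbf{Construction of \(W\).} Fix intermediate concentric balls \(U\Subset D_1\Subset D_{\mathrm{cmp}}\). Let \(\phi\) be a mollification of \(v\) at a fixed scale (fixed by the domain gaps). By \cref{prop:modulus}, applied with \(\Lambda\) replaced by \(\Lambda^2\), \(v\) has a uniform \(C^\beta\) modulus on \(D_{\mathrm{cmp}}\). Hence \(\norm{\phi-v}_{L^\infty(\partial D_{\mathrm{cmp}})}\le\tau_0\) with \(\tau_0\) as small as we like, and \(\norm{\phi}_{C^3}\le C\) with \(C\) depending only on the allowed data.

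Let \(w_1\) solve \(G(D^2w_1)=\lambda/2\) in \(D_{\mathrm{cmp}}\) with \(w_1=\phi+\tau_0\) on \(\partial D_{\mathrm{cmp}}\). This is solvable by \cref{prop:dirichlet}, using a quadratic perturbation \(\phi+\kappa(\abs{x}^2-R^2)\) as a strict subsolution. Since \(v\le w_1\) on the boundary and \(G(D^2v)\ge\lambda>\lambda/2\), comparison gives \(w_1\ge v\). Moreover concavity gives \(\Slin(D^2w_1)^{ij}\partial_{ij}(w_1-v)\le-c_0\), because \(\sigma_2(D^2v)-\sigma_2(D^2w_1)\ge 3\lambda^2/4\) and the linearization at \(w_1\) dominates the difference.

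Set \(W=w_1-2\tau_0\), so \(\rho=W-v\le-\tau_0<0\) near \(\partial D_{\mathrm{cmp}}\). Let \(\Omega\) be the component of \(\{\rho>0\}\) containing \(U\), after perturbing the level by a regular value (Sard) so that \(\partial\Omega\) is smooth. Then \(\Omega\Subset D_{\mathrm{cmp}}\) and \(\rho\le C_\rho\) by the \(L^\infty\) bounds.

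\textbf{Gap on \(U\).} This will be done by compactness followed by propagation.
\begin{itemize}
\item \emph{Seed gap.} Suppose there were admissible sequences satisfying the hypotheses with \(\sup_{D_1}(w_1-v)\to0\). By \cref{prop:modulus} both sequences are locally uniformly convergent, to the same limit. By weak continuity of Hessian measures \cite{TW1999}, the limit would have \(\sigma_2\)-measure both \(\ge\lambda^2\) and \(=\lambda^2/4\) on \(D_1\), which is a contradiction. Combining this with the uniform modulus yields \(w_1-v\ge\delta\) on a ball of fixed radius inside \(D_1\).
\item \emph{Propagation.} \Cref{prop:gap}, applied to \(v_0=w_1\) and \(q=w_1-v\), propagates a fraction \(c\delta\) of this gap to \(\overline U\). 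Choosing \(\tau_0\le c\delta/4\) afterwards gives \(\rho\ge c_\rho\) on \(U\). The constants are independent of derivatives of the right side for \(v\), since only \(w_1\) is differentiated.
\end{itemize}

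\textbf{Gradient bound.} The main obstacle is \(\norm{DW}_{L^\infty(\Omega)}\le C\), which is needed both here and in \cref{prop:gap}. Since \(\Omega\) may approach \(\partial D_{\mathrm{cmp}}\), an interior estimate alone is insufficient. I would first try the standard global gradient estimate for the constant equation \(\sigma_2(D^2w_1)=\lambda^2/4\) with boundary data \(\phi+\tau_0\), whose \(C^3\) norm is fixed. In that estimate the boundary gradient is controlled by the sub-barrier \(\phi+\tau_0+\kappa(\abs x^2-R^2)\) from below and the harmonic extension of the data from above, since \(\Delta w_1>0\). The interior gradient is then bounded by the boundary gradient through the maximum principle for \(\pm\partial_kw_1\), which solve the linearized equation with zero right side. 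If the harmonic upper barrier turns out to be inadequate, the fallback is to solve on a slightly larger ball and use the Chou--Wang interior gradient estimate \cite{ChouWang2001}, which depends only on oscillation. In that case one must make sure \(\Omega\) stays inside the region where the estimate holds, by choosing \(\tau_0\) small relative to the boundary modulus.
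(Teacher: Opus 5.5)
Your proposal is correct. Its architecture matches the paper's: $W$ is a downward shift of a solution of a constant-data Dirichlet problem with data below $\lambda$. The gap $W-v$ is seeded on one ball and spread to $U$ by \cref{prop:gap}, and $\Omega$ is a component of a Sard-regular superlevel set.

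Two steps are done differently. For the seed gap, the paper keeps the exact trace of $v$ on $\partial D_{\rm cmp}$. It integrates the polarized identity $\Slin(D^2\frac{v+v_0}{2})^{ij}q_{ij}=c_*^2-G(D^2v)^2$ against a bump, using that $\Slin$ of a Hessian is divergence free, which gives the explicit bound $\sup_{B_r}q\ge cr^{n+2}$. Your compactness argument via weak continuity of Hessian measures is also valid, but it is non-constructive.

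For keeping $\Omega$ off $\partial D_{\rm cmp}$ and bounding $DW$, the paper shows $0\le v_0-v\le h-v\le C\,\dist(x,\partial D_{\rm cmp})^\beta$, with $h$ the harmonic extension of $v$. Hence the superlevel set stays a fixed distance inside, and only the interior Chou--Wang gradient estimate is needed. You instead mollify and lift the boundary data so that $W-v\le0$ on $\partial D_{\rm cmp}$ automatically, and you prove a global gradient bound. Your primary argument for that bound already works: $\underline w\le w_1\le H$ controls $\partial_\nu w_1$, and $\pm\partial_k w_1$ solve the homogeneous linearized equation, so no fallback is needed. Your route makes the location of $\Omega$ irrelevant. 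The cost is an extra parameter $\tau_0$ whose choice must come after the seed and propagation constants; the paper's route avoids mollification entirely.

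Three points need repair.
\begin{itemize}
\item On $\partial D_{\rm cmp}$ you only have $\rho=\phi-v-\tau_0\in[-2\tau_0,0]$, not $\rho\le-\tau_0$. Require $\norm{\phi-v}_{L^\infty(\partial D_{\rm cmp})}\le\tau_0/2$, or take a small positive regular level. Then use the global Lipschitz bound for $w_1$ and the $C^\beta$ modulus of $v$ to keep $\Omega$ a positive distance from the boundary.
\item To make ``choose $\tau_0\le c\delta/4$ afterwards'' non-circular, run the compactness uniformly over $\tau_0\in(0,1]$ and the mollification scale; this is fine since $-A\le w_1\le A+1$. Also feed \cref{prop:gap} the interior Chou--Wang bound, which depends only on $\norm{w_1}_{L^\infty}$ and $\lambda$. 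Your global bound depends on $\tau_0$ through $\norm{\phi}_{C^3}$.
\item Since $\sigma_2$ is not concave, the claim that the linearization dominates the difference of $\sigma_2$ is false as stated; take $A=tB$ with $t>1$. The correct justification is the reverse Cauchy--Schwarz inequality: $\Slin(D^2w_1)^{ij}(v-w_1)_{ij}=2\ip{D^2v}{D^2w_1}-2\sigma_2(D^2w_1)\ge2G(D^2w_1)\bigl(G(D^2v)-G(D^2w_1)\bigr)\ge\lambda^2/2$.
\end{itemize}
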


\begin{proof}
The proof for arbitrary fixed concentric radii is identical after changing fixed numerical constants.
To avoid carrying three radii, we use the representative normalization
\[
 U=B_{1/2},\qquad D_{\mathrm{cmp}}=B_{3/2},
 \qquad D_{\mathrm{out}}=B_2.
\]
Let \(v_0\) solve
\[
 G(D^2v_0)=c_*,\qquad v_0=v\quad\text{on }\partial B_{3/2},
\]
where \(0<c_*<\lambda/2\).
Existence follows from \cref{prop:dirichlet}, because \(v\) itself is a strict admissible subsolution for the constant equation; comparison gives \(v\le v_0\).
Since both are subharmonic and have the same boundary trace,
\[
 \norm{v_0}_{L^\infty(B_{3/2})}\le A.
\]
By the interior gradient estimate for the equation with constant data in \cite[Section~3]{ChouWang2001},
\begin{equation}
 \norm{Dv_0}_{L^\infty(B_1)}\le C(n,c_*,A).
\label{eq:constant-gradient}
\end{equation}

We first obtain a positive gap without differentiating the variable right side.
Put \(q=v_0-v\ge0\).
Quadratic polarization gives
\begin{equation}
 \Slin\!\left(D^2\frac{v+v_0}{2}\right)^{ij}q_{ij}
 =c_*^2-G(D^2v)^2\le-c_1<0.
\label{eq:gap-equation}
\end{equation}
The identity \(\partial_i\Slin(D^2((v+v_0)/2))^{ij}=0\) holds.
Choose a nonnegative bump \(\eta_r\in C_c^\infty(B_r)\) with
\[
 \int\eta_r=r^{n+2},\qquad \abs{D^2\eta_r}\le C.
\]
Twice integrating \eqref{eq:gap-equation} by parts gives
\begin{equation}
 c_1r^{n+2}
 \le C\sup_{B_r}q\int_{B_r}
 \tr\Slin\!\left(D^2\frac{v+v_0}{2}\right).
\label{eq:gap-mean-value}
\end{equation}
For a fixed cutoff \(\chi=1\) on \(B_r\),
\[
 \int_{B_r}\tr\Slin\!\left(D^2\frac{v+v_0}{2}\right)
 \le C\int\chi(\Delta v+\Delta v_0)
 =C\int(v+v_0)\Delta\chi\le C(A).
\]
Thus \(q\) is bounded below by a fixed positive constant at some point.

We next enlarge the pointwise gap to a ball.
\Cref{prop:modulus} gives a \(C^\beta\) modulus for \(v\) independent of derivatives of the right side.
Equation \eqref{eq:constant-gradient} gives a Lipschitz modulus for \(v_0\).
Hence \(q\) has a fixed \(C^\beta\) modulus, and the pointwise lower bound from \eqref{eq:gap-mean-value} holds on a ball:
\begin{equation}
 q\ge\delta_0>0\quad\text{on }B_{r_0}(x_*),
\label{eq:local-gap}
\end{equation}
with fixed \(r_0,\delta_0\).

Finally, we propagate the gap and keep the comparison component away from the outer boundary.
After normalizing the constant data, concavity gives
\[
 \langle DG(D^2v_0),D^2q\rangle
 \le G(D^2v_0)-G(D^2v)\le c_*-\lambda<0.
\]
Multiplying by the fixed factor \(2c_*\) gives \(L_{v_0}q\le-c_2<0\) for the linearization used in \cref{prop:gap}.
\Cref{prop:gap}, applied along finitely many fixed overlapping ball chains, propagates \eqref{eq:local-gap} to
\begin{equation}
 v_0-v\ge c_0>0\quad\text{on }B_{1/2}.
\label{eq:uniform-gap}
\end{equation}
To keep the eventual component away from the outer boundary, let \(h\) be the harmonic function in \(B_{3/2}\) with boundary trace \(v\).
Since \(\Delta v,\Delta v_0>0\),
\[
 0\le v_0-v\le h-v.
\]
The Poisson formula and the \(C^\beta\) modulus give
\begin{equation}
 h(x)-v(x)\le C\,\dist(x,\partial B_{3/2})^\beta.
\label{eq:boundary-decay}
\end{equation}

Choose a regular value \(\delta\in(c_0/4,c_0/2)\) by Sard's theorem, and let \(\Omega\) be the component of \(\{v_0-v>\delta\}\) containing \(B_{1/2}\).
Estimate \eqref{eq:boundary-decay} keeps this component a fixed distance from the outer boundary.
Set
\[
 W=v_0-\delta.
\]
The same interior gradient estimate, now applied on a fixed smaller ball, bounds \(Dv_0\) throughout \(\Omega\).
Then \eqref{eq:component-inclusions}--\eqref{eq:component-bounds} follow from \eqref{eq:constant-gradient}, \eqref{eq:uniform-gap}, and the construction.
\end{proof}
We need two quantitative consequences of the calculations in Chou--Wang \cite[Sections~3--4]{ChouWang2001}.
Their theorem statements do not record the precise dependence needed here.
We keep the parameters in their calculations (3.7)--(3.13) and (4.4)--(4.13).

\begin{lemma}\label{lem:gradient-bound}
Let \(V_0\Subset V\Subset\{\zeta=1\}\) be fixed domains, and let \(v\) be a smooth admissible solution of
\[
 G(D^2v)+(g-b)\eta(\Delta v/P)=g
 \quad\hbox{in }V.
\]
Assume
\[
 0<c_0\le b\le g\le c_1,\qquad
 \norm v_{L^\infty(V)}\le A_0.
\]
Put \(L=\norm{D(b^2)}_{L^\infty(V)}\).
Then
\begin{equation}
 \quad
 \norm{Dv}_{L^\infty(V_0)}
 \le C\left(1+P^{1/2}+L^{1/5}\right),\quad
\label{eq:gradient-estimate}
\end{equation}
where \(C\) depends only on \(n,c_0,c_1,A_0\), the fixed domain gap, and \(\eta\).
\end{lemma}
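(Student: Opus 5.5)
The plan is to run the Chou--Wang interior gradient argument \cite[Section~3]{ChouWang2001} for the modified operator, keeping track of where derivatives of the right side enter. Write the equation as \(\mathcal F(x,D^2v)=g\) with \(\mathcal F(x,H)=G(H)+(g-b)\eta(\tr H/P)\). On \(V\) we have \(\zeta=1\), so \(G(D^2v)=g-(g-b)\eta\in[b,g]\subset[c_0,c_1]\). Because \(g\) is only H\"older, I will not differentiate this equation in the region where \(\eta<1\). Instead, I split according to the trace at the maximum point of the auxiliary function.

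Fix a cutoff \(\varphi\in C_c^\infty(V)\) equal to one on \(V_0\). Following Chou--Wang, consider
\[
 \Psi=\varphi^{\,\cdot}\,\abs{Dv}^2\,e^{\psi(v)},
\]
with the power of \(\varphi\) and the function \(\psi\) chosen as in their (3.7)--(3.13), using \(\norm v_{L^\infty}\le A_0\). Let \(x_0\) be a maximum point. There are two cases.

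\emph{Case 1: \(\Delta v(x_0)<\tau_*P\).} No differential inequality is used here. The first-order maximum condition combined with the \(C^\beta\) modulus from \cref{prop:modulus} is not quite enough. Instead I will use interpolation: for admissible \(v\) with \(\abs{v}\le A_0\), a bound \(\abs{D^2v}\le C\Delta v\lesssim P\) near \(x_0\) (from \cref{lem:newton}) together with the \(L^\infty\) bound gives \(\abs{Dv}^2\lesssim A_0\sup\abs{D^2v}\) on a ball of radius comparable to \(\abs{Dv}/P\), so \(\abs{Dv(x_0)}\le C P^{1/2}\). The weak point is that the Hessian bound is only known at \(x_0\), not on a ball. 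To handle this, I will first try to redefine the test function so that its maximum automatically lies where the trace is large. If that fails, I will use the concavity and ellipticity of the full operator \(\mathcal F\) and run the Chou--Wang computation on it, treating the \(\eta'\) contribution as an additional positive Laplacian term of size \((g-b)\eta'/P\). That term only helps in the ellipticity, while the \(x\)-derivative of \((g-b)\eta\) is the obstruction. Either way, this case should produce the \(P^{1/2}\) term.

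\emph{Case 2: \(\Delta v\ge\tau_*P\) near \(x_0\).} Here \(\eta\equiv1\) to infinite order, so \(\sigma_2(D^2v)=b^2\) near \(x_0\) and is smooth. Differentiating once gives
\[
 \Slin(D^2v)^{ij}v_{kij}=(b^2)_k,
\]
which is controlled by \(L\). Running (3.7)--(3.13) of Chou--Wang with the cutoff and \(\psi(v)\), the error terms involving \(Dv\cdot D(b^2)\) are absorbed using the good term \(\sum\Slin^{ii}\ge c\,(\tr D^2v)\), which is bounded below by Newton's inequality together with \(b\ge c_0\). Balancing the powers of \(\abs{Dv}\) in their (3.12)--(3.13) against \(L\abs{Dv}\) should give \(\abs{Dv}^5\lesssim L+\cdots\), that is, a bound \(C(1+L^{1/5})\). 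The exponent \(1/5\) comes from tracking the homogeneity, so I expect this bookkeeping to be routine but tedious.

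Combining the two cases at \(x_0\) gives \eqref{eq:gradient-estimate} on \(V_0\). I expect the main obstacle to be the transition region \(0<\eta'(\Delta v/P)\) in Case 1, where neither the plain trace bound nor smooth differentiation is directly available. The mechanism I will rely on is that the pure trace bound \(\Delta v\lesssim P\) already controls the gradient at scale \(P^{1/2}\) through interpolation with \(A_0\).
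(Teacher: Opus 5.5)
Your Case~1 ($\Delta v(x_0)<\tau_*P$, which contains the entire transition region) has a genuine gap, and you do not close it. The interpolation $\abs{Dv}^2\lesssim A_0\sup\abs{D^2v}$ needs a lower bound on $v_{11}$ along a segment through $x_0$ of length comparable to $\abs{Dv(x_0)}/P$. The cone only gives $\lambda_{\min}(D^2v)>-(n-2)\Delta v$, and $\Delta v$ is controlled only at the single point $x_0$. Neither fallback works. You give no mechanism that forces the maximum of a \emph{gradient} test function into $\{\Delta v\ge\tau_*P\}$; this differs from the Hessian test function of \cref{lem:maximum-point}, whose size is tied to $\lambda_{\max}$. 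Differentiating the full operator gives $\mathcal F^{ij}v_{ijk}=(1-\eta)g_k+\eta b_k$, so $Dg$ enters wherever $\eta<1$. That is exactly the obstruction you name, and it is not removed.

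The missing idea is that Case~1 needs neither second-order information nor a differentiated equation. The first-order condition at $x_0$ suffices, contrary to your remark. With $Dv(x_0)=v_1e_1$, differentiating $\log\Psi$ in $e_1$ gives $v_{11}=-a\,v_1^2-(\text{cutoff term})\,v_1$. Here $a\sim1$ comes from the weight in $v$; for your $e^{\psi(v)}$ you need $\psi'$ positive and bounded below. So either the cutoff term dominates, which bounds $\Psi(x_0)$, or $v_{11}\le-cv_1^2$. In the latter case \cref{lem:newton} gives $-cv_1^2\ge v_{11}\ge\lambda_{\min}(D^2v)>-(n-2)\Delta v(x_0)>-(n-2)\tau_*P$, hence $v_1^2\le CP$ pointwise at $x_0$. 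This is where the paper's $P^{1/2}$ comes from.

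The same inequality $\Delta v\ge cv_1^2$ also produces the exponent $1/5$ in the plateau case. There the second-order condition, using $\Slin(D^2v)^{11}=\Delta v-v_{11}\ge\Delta v$ and after absorbing cutoff terms, gives $\Sigma_Fv_1^3\le C\abs{D(b^2)}$ with $\Sigma_F=\tr\Slin(D^2v)=(n-1)\Delta v\ge cv_1^2$. Hence $v_1^5\le CL$. The bound $\Sigma_F\ge c$ that you invoke, from Newton's inequality and $b\ge c_0$, only yields $v_1^3\le CL$, i.e.\ $L^{1/3}$, which is weaker than the stated estimate.

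Finally, you only know $\Delta v(x_0)\ge\tau_*P$, not $\Delta v\ge\tau_*P$ nearby. This is harmless, because the gradient computation uses only the first derivative of the equation at $x_0$. There $\eta=1$ and $\eta'=0$, so the $Dg$ terms cancel and the derivative reduces to $\Slin(D^2v)^{ij}v_{ijk}=(b^2)_k$.
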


\begin{proof}
Fix a ball \(B_{2r}\Subset V\) such that \(B_r\) meets \(V_0\).
Choose \(\chi\in C_c^\infty(B_{2r})\) such that
\[
0\le \chi\le1,\qquad
\chi\equiv1\quad\hbox{on }B_r,
\]
and, on \(\{\chi>0\}\),
\[
\frac{|D\chi|^2}{\chi}+|D^2\chi|\le C.
\]
For instance, one may take the square of a standard cutoff.  Consequently,
if
\[
q_i=(\log\chi)_i=\frac{\chi_i}{\chi},
\]
then
\begin{equation}
 |q|^2+|D^2\log\chi|
 \le \frac{C}{\chi}.
 \label{eq:grad-cutoff}
\end{equation}

Set
\[
M_0=4A_0+1,
\qquad
\Phi(v)=(M_0-v)^{-1/2}.
\]
Since \(|v|\le A_0\),
\[
3A_0+1\le M_0-v\le5A_0+1.
\]
Writing
\[
a:=\frac{\Phi'}{\Phi}
   =\frac{1}{2(M_0-v)},
\]
we therefore have, with comparison constants depending only on \(A_0\),
\begin{equation}
 a\sim1,
 \qquad
 \beta:=
 \frac{\Phi''}{\Phi}
 -2\left(\frac{\Phi'}{\Phi}\right)^2
 =\frac{1}{4(M_0-v)^2}
 \sim1.
 \label{eq:grad-weight}
\end{equation}

Consider
\[
\mathcal G(x,\xi)
 =\chi(x)\Phi(v(x))v_\xi(x),
 \qquad x\in B_{2r},\quad \xi\in\mathbb S^{n-1},
\]
where the sign of \(\xi\) is chosen so that \(v_\xi\ge0\).
Equivalently,
\[
\max_{\xi\in\mathbb S^{n-1}}\mathcal G(x,\xi)
 =\chi(x)\Phi(v(x))|Dv(x)|.
\]
Let \((x_0,\xi_0)\) be a positive maximum.  Since \(\chi\) vanishes
on the boundary of its support, \(x_0\) is an interior point.
After a rotation we may assume
\[
\xi_0=e_1,
\qquad
Dv(x_0)=v_1e_1,
\qquad
v_1=|Dv(x_0)|>0.
\]
In what follows all quantities are evaluated at \(x_0\).

Differentiating $ \log\mathcal G $ once gives
\begin{equation}
 \frac{v_{1i}}{v_1}+a v_i+q_i=0.
 \label{eq:gradient-first-derivative}
\end{equation}
In particular, since \(v_i=0\) for \(i\ge2\),
\[
v_{11}
 =-a v_1^2-q_1v_1.
\]
By \eqref{eq:grad-cutoff} and Young's inequality,
\[
|q_1|v_1
 \le \frac a2v_1^2+\frac{C}{a\chi},
\]
and hence
\begin{equation}
 v_{11}
 \le -\frac a2v_1^2+\frac{C}{\chi}.
 \label{eq:grad-v11-pre}
\end{equation}
Thus, after increasing a constant \(C_*=C_*(A_0,r)\), there are
two alternatives:
\begin{equation}
 \chi v_1^2\le C_*,
 \label{eq:grad-cutoff-alt}
\end{equation}
or
\begin{equation}
 v_{11}\le-c\,v_1^2<0,
 \label{eq:negative-directional-hessian}
\end{equation}
where here and below \(c,C>0\) may depend on
\(n,A_0,r\), but not on \(P\) or \(b\).

We first consider the nonplateau case
\[
T:=\Delta v<\tau_*P.
\]
If \eqref{eq:grad-cutoff-alt} holds there is nothing further to prove.
Otherwise \eqref{eq:negative-directional-hessian} holds.  Since
\[
\lambda_{\min}(D^2v)\le v_{11}
\]
and, by \cref{lem:newton},
\[
\lambda_{\min}(D^2v)>-(n-2)T,
\]
we obtain
\[
c v_1^2
 \le -v_{11}
 <(n-2)T
 <(n-2)\tau_*P.
\]
Therefore
\begin{equation}
 \chi v_1^2\le C(1+P)
 \qquad\text{if }T<\tau_*P.
 \label{eq:grad-low-trace}
\end{equation}

It remains to consider
\[
T\ge\tau_*P.
\]
Since \(\eta\equiv1\) on \([\tau_*,\infty)\), and all its positive
derivatives vanish at \(t=\tau_*\), the equation and its first spatial
derivatives at \(x_0\) agree with those of
\begin{equation}
 \sigma_2(D^2v)=b^2.
 \label{eq:plateau-equation}
\end{equation}
Put
\[
h=b^2,
\qquad
F^{ij}:=\frac{\partial\sigma_2}{\partial v_{ij}}
       =T\delta_{ij}-v_{ij}.
\]
Because \(D^2v\in\Gamma_2\), the matrix \(F=(F^{ij})\) is positive
definite.  Moreover,
\begin{equation}
 \Sigma_F:=\sum_iF^{ii}
 =(n-1)T,
 \label{eq:grad-linearized-trace}
\end{equation}
and the homogeneity of \(\sigma_2\) gives
\begin{equation}
 F^{ij}v_{ij}=2\sigma_2(D^2v)=2h.
 \label{eq:grad-euler}
\end{equation}
Differentiating \eqref{eq:plateau-equation} in the \(e_1\)-direction yields
\begin{equation}
 F^{ij}v_{ij1}=h_1.
 \label{eq:grad-diff-eqn}
\end{equation}

We now carry out the second derivative calculation explicitly.
At the maximum point,
\[
0\ge
F^{ij}(\log\mathcal G)_{ij}.
\]
Since
\[
(\log\Phi(v))_{ij}
 =a v_{ij}
  +\left(
      \frac{\Phi''}{\Phi}-a^2
    \right)v_iv_j,
\]
we have, using
\eqref{eq:grad-euler}--\eqref{eq:grad-diff-eqn},
\begin{align}
0
&\ge
 \frac{h_1}{v_1}
 -F^{ij}\frac{v_{1i}v_{1j}}{v_1^2}
 +2ah
 +\left(
     \frac{\Phi''}{\Phi}-a^2
   \right)F^{ij}v_iv_j
 +F^{ij}(\log\chi)_{ij}.
\label{eq:grad-second-raw}
\end{align}
By the first derivative identity \eqref{eq:gradient-first-derivative},
\[
\frac{v_{1i}}{v_1}=-(av_i+q_i),
\]
and hence
\begin{align*}
&-F^{ij}\frac{v_{1i}v_{1j}}{v_1^2}
 +\left(
     \frac{\Phi''}{\Phi}-a^2
   \right)F^{ij}v_iv_j
\\
&\qquad
=
\left(
 \frac{\Phi''}{\Phi}-2a^2
\right)F^{ij}v_iv_j
 -2aF^{ij}v_iq_j
 -F^{ij}q_iq_j
\\
&\qquad
=
\beta F^{ij}v_iv_j
 -2aF^{ij}v_iq_j
 -F^{ij}q_iq_j.
\end{align*}
Since \(F>0\), Cauchy's inequality with respect to the quadratic
form \(F\) gives
\[
2a\bigl|F^{ij}v_iq_j\bigr|
\le
\frac{\beta}{2}F^{ij}v_iv_j
 +\frac{2a^2}{\beta}F^{ij}q_iq_j.
\]
By \eqref{eq:grad-weight}, \(a^2/\beta\) is uniformly bounded.
Therefore
\begin{equation}
-F^{ij}\frac{v_{1i}v_{1j}}{v_1^2}
 +\left(
     \frac{\Phi''}{\Phi}-a^2
   \right)F^{ij}v_iv_j
\ge
\frac{\beta}{2}F^{ij}v_iv_j
 -C F^{ij}q_iq_j.
\label{eq:grad-square-complete}
\end{equation}
Because \(Dv=v_1e_1\) at \(x_0\),
\[
F^{ij}v_iv_j=F^{11}v_1^2.
\]
Also, positivity of \(F\), \eqref{eq:grad-cutoff}, and
\eqref{eq:grad-linearized-trace} imply
\[
F^{ij}q_iq_j
 \le\Sigma_F|q|^2
 \le\frac{C\Sigma_F}{\chi},
\]
and
\[
F^{ij}(\log\chi)_{ij}
 \ge-\Sigma_F|D^2\log\chi|
 \ge-\frac{C\Sigma_F}{\chi}.
\]
Substituting these estimates into
\eqref{eq:grad-second-raw}, and using \(a>0\), \(h>0\), gives
\begin{equation}
 \frac{\beta}{2}F^{11}v_1^2
 \le
 \frac{|Dh|}{v_1}
 +\frac{C\Sigma_F}{\chi}.
 \label{eq:grad-master}
\end{equation}

We may again assume that the cutoff alternative
\eqref{eq:grad-cutoff-alt} does not hold, with \(C_*\) enlarged
once more if necessary.  Then \eqref{eq:negative-directional-hessian} holds, and therefore
\[
F^{11}=T-v_{11}\ge T
       =\frac{\Sigma_F}{n-1}.
\]
Thus \eqref{eq:grad-master}, after multiplication by \(v_1\), yields
\begin{equation}
 c\Sigma_F v_1^3
 \le |Dh|+\frac{C\Sigma_F v_1}{\chi}.
 \label{eq:grad-before-absorb}
\end{equation}
If \(C_*\) in \eqref{eq:grad-cutoff-alt} is chosen sufficiently large,
then its failure implies
\[
\frac{C\Sigma_F v_1}{\chi}
 \le \frac c2\Sigma_F v_1^3.
\]
Consequently
\begin{equation}
 \Sigma_F v_1^3\le C|Dh|.
 \label{eq:gradient-cubic-bound}
\end{equation}

It remains only to give the elementary lower bound for
\(\Sigma_F\).  From \eqref{eq:negative-directional-hessian},
\[
\lambda_{\min}(D^2v)
 \le v_{11}
 \le-cv_1^2.
\]
On the other hand, \cref{lem:newton} gives
\[
\lambda_{\min}(D^2v)>-(n-2)T.
\]
Hence
\[
T\ge c v_1^2,
\]
and therefore, by \eqref{eq:grad-linearized-trace},
\begin{equation}
 \Sigma_F=(n-1)T\ge c v_1^2.
 \label{eq:linearized-trace-lower}
\end{equation}
Combining \eqref{eq:gradient-cubic-bound} and \eqref{eq:linearized-trace-lower}, and recalling that
\[
|Dh|=|D(b^2)|\le L,
\]
we conclude that in the large-gradient plateau alternative
\begin{equation}
 v_1^5\le C L.
 \label{eq:gradient-fifth-power-bound}
\end{equation}

We have therefore proved that at the maximum point \(x_0\) one of
the following holds:
\[
\chi(x_0)v_1(x_0)^2\le C,
\qquad
\chi(x_0)v_1(x_0)^2\le C(1+P),
\qquad\text{or}\qquad
v_1(x_0)^5\le C(1+L).
\]
Since \(\Phi\) is bounded above and below by positive constants depending
only on \(A_0\), for every \(x\in B_r\),
\begin{align*}
|Dv(x)|
&\le
C\,\mathcal G(x_0,\xi_0)
\\
&\le
C\,\chi(x_0)v_1(x_0)
\\
&\le
C\left(1+P^{1/2}+L^{1/5}\right).
\end{align*}
Thus
\[
\|Dv\|_{L^\infty(B_r)}
\le
C\left(1+P^{1/2}+L^{1/5}\right).
\]
Finally, a finite covering of
\(\overline{V_0}\) by such balls \(B_r\), with their doubled balls
contained in \(V\), gives
\[
\|Dv\|_{L^\infty(V_0)}
\le
C\left(1+P^{1/2}+L^{1/5}\right).
\]
This proves the lemma.
\end{proof}

\begin{lemma}
\label{lem:hessian-bound-v}
Let \(\Omega\) be a smooth bounded domain and let \(v,W\) be smooth admissible functions on a neighborhood of \(\overline\Omega\).
Suppose
\[
 \rho=W-v>0\ \hbox{in }\Omega,\qquad
 \rho=0\ \hbox{on }\partial\Omega,
\]
and let \(U\Subset\Omega\) satisfy
\[
 \rho\ge\rho_0>0\quad\hbox{on }U,\qquad
 \rho\le\rho_1\quad\hbox{on }\Omega.
\]
Assume that the global maximum of the auxiliary function introduced below occurs at a point where, through second order, the equation is
\[
 G(D^2v)=b(x),\qquad 0<c_0\le b\le c_1.
\]
Set
\[
 K=1+\norm{Dv}_{L^\infty(\Omega)}
       +\norm{DW}_{L^\infty(\Omega)},\qquad
 L=\norm{D(b^2)}_\infty,\qquad
 J=\norm{D^2(b^2)}_\infty.
\]
Then
\begin{equation}
 \quad
 \sup_U\Delta v
 \le C\left(1+K^2+KL+K\sqrt J\right),\quad
\label{eq:hessian-estimate}
\end{equation}
where \(C\) depends only on \(n,c_0,c_1,\rho_0,\rho_1\) and fixed domain geometry.
\end{lemma}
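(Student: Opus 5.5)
We follow the Pogorelov-type calculation of Chou--Wang \cite[Section~4]{ChouWang2001}, with the cutoff \(\rho=W-v\) in place of the usual \(\rho=u-\ell\). Consider on \(\overline\Omega\times\Sn\) the auxiliary function
\[
 \Psi(x,\xi)=\rho(x)^{\beta}\,v_{\xi\xi}(x)\,\exp\!\Big(\frac{a}{2}\abs{Dv(x)}^2\Big),
\]
where \(\beta\ge2\) is a fixed exponent (\(\beta=4\), as in Chou--Wang, should do) and \(a\sim K^{-2}\). This choice of \(a\) makes the gradient factor bounded above and below by absolute constants. Since \(\rho=0\) on \(\partial\Omega\), a positive maximum occurs at an interior point \(x_0\). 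By hypothesis, at \(x_0\) the equation is \(\sigma_2(D^2v)=h:=b^2\) through second order. We rotate so that \(\xi=e_1\) is the maximizing direction and \(D^2v(x_0)\) is diagonal with \(\lambda_1=v_{11}\) its largest eigenvalue. If \(v_{11}(x_0)\) is already bounded by \(C(1+K^2)\), the estimate on \(U\) follows from \(\rho\ge\rho_0\) there and from \(\lambda_{\max}<\Delta v\). Hence we assume \(v_{11}\) is large.

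At \(x_0\) we use \(F^{ij}=T\delta_{ij}-v_{ij}\) as the linearized operator. The first-order conditions are
\[
 \frac{\beta\rho_i}{\rho}+\frac{v_{11i}}{v_{11}}+a v_kv_{ki}=0 .
\]
The second-order inequality \(F^{ij}(\log\Psi)_{ij}\le0\) requires the twice-differentiated equation
\[
 F^{ij}v_{ij11}=h_{11}-\sigma_2^{ij,rs}v_{ij1}v_{rs1},
\]
the once-differentiated equation \(F^{ij}v_{ijk}=h_k\), and the identity \(F^{ij}v_{ik}v_{jk}=\sigma_1\sigma_2-3\sigma_3\). For the \(\rho\)-term we use that \(W\) is admissible and apply concavity of \(\sigma_2^{1/2}\):
\[
 F^{ij}\rho_{ij}=F^{ij}W_{ij}-2h\ \ge\ -2h .
\]
The third-order terms \(-F^{ij}v_{11i}v_{11j}/v_{11}^2\) and \(-\sigma_2^{ij,rs}v_{ij1}v_{rs1}/v_{11}\) are then handled exactly as in Chou--Wang's (4.4)--(4.13). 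Two ingredients are needed. The first is the \(\sigma_2\)-specific concavity lower bound for \(-\sigma_2^{ij,rs}v_{ij1}v_{rs1}\), which splits into the index-one terms and the rest. The second is the first-order condition, which trades \(v_{11i}\) for \(\rho_i/\rho\) and \(a v_iv_{ii}\). Here \(\abs{D\rho}\le 2K\) enters.

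Collecting terms should give an inequality of the form
\[
 0\ge a F^{ii}v_{ii}^2-\frac{C\beta K^2}{\rho^2}F^{11}-\frac{C}{\rho}\sum F^{ii}-\frac{\abs{h_{11}}}{v_{11}}-aK\abs{Dh}.
\]
The term \(aK\abs{Dh}\) comes from \(a v_kF^{ij}v_{ijk}=a v_kh_k\). Using \(F^{ii}v_{ii}^2\ge c\,T\,v_{11}^2\) for \(v_{11}\) large, together with \(F^{11}\ge c\,T/v_{11}\)-type Newton bounds and \(\sum F^{ii}=(n-1)T\), we multiply by \(\rho^2v_{11}/T\). This yields
\[
 (\rho v_{11})^2\le C\big(K^4+K^2\rho\,\abs{Dh}/a+\rho^2J\big)\le C\big(K^4+K^2L^2+K^2J\big),
\]
with \(K\)-powers absorbed appropriately. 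Taking square roots and evaluating \(\Psi\) on \(U\), where \(\rho\ge\rho_0\), gives \eqref{eq:hessian-estimate}.

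The main obstacle is the third-order bookkeeping. We must check that the \(\sigma_2\) concavity term together with the \(-F^{11}v_{111}^2/v_{11}^2\) term can be absorbed without any lower bound on \(\sigma_3\), since there is no semiconvexity. We would first try the Chou--Wang splitting: treat \(i=1\) via the first-order condition and \(i\ge2\) via \(-\sigma_2^{1i,i1}v_{1i1}^2\ge 2v_{11i}^2\). We also need to track the \(K\)-dependence so that \(L\) enters only linearly with a factor \(K\), and \(J\) only through \(\sqrt J\).
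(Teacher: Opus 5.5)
\textbf{Verdict: genuine gap.} Your framework is the paper's: the Chou--Wang function with cutoff $\rho=W-v$, the bound $F^{ij}W_{ij}\ge0$ for the $\rho$-term, and a gradient weight with $a\sim K^{-2}$. Your exponential weight works as well as the paper's $(1-t/K^2)^{-1/8}$ once $aK^2$ is small.

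The failure is in the step that closes the argument. You use $F^{ii}v_{ii}^2\ge c\,T\,v_{11}^2$ and $F^{11}\ge cT/v_{11}$. Both fail when the second eigenvalue is small compared with $\lambda_1$. For $n=3$ and eigenvalues $(N,\epsilon,\epsilon)$ with $2N\epsilon+\epsilon^2=1$, one has $F^{11}=2\epsilon\approx N^{-1}$ and $F^{ii}v_{ii}^2=\sigma_1\sigma_2-3\sigma_3\approx N$, not of order $N^3$. In general you only have $F^{ii}v_{ii}^2\ge F^{11}\lambda_1^2\ge\frac2n h\lambda_1$. So the good term may be as small as $ah\lambda_1\approx\lambda_1/K^2$.

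In that regime your collected inequality does not close, for two reasons. First, its term $\frac{C}{\rho}\sum_iF^{ii}\ge\frac{C}{\rho}\lambda_1$ has the same order as the good term, so it gives no bound on $\lambda_1$. Second, you place the coefficient $F^{11}$ on the $K^2\rho^{-2}$ term, but trading $v_{11i}$ for $\rho_i/\rho$ for $i\ge2$ leaves $\sum_iF^{ii}$ there instead. Separately, your intermediate quantity $K^2\rho\abs{Dh}/a\approx K^4\rho L$ is not bounded by $C(K^4+K^2L^2)$.

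What is missing is the eigenvalue case split the paper takes from Chou--Wang. If $\lambda_2\ge\delta\lambda_1$, then $F^{11}\ge c(\delta)\sum_iF^{ii}$. The first-order condition then handles every index, and the good term is at least $ca\lambda_1^3$.

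If $\lambda_2<\delta\lambda_1$, then $F^{11}=\sigma_1(\lambda|1)$ and all $\abs{\lambda_i}$ with $i\ge2$ are $O(\sqrt\delta\,\lambda_1)$, by $\abs{\lambda}^2<\sigma_1^2$. This case takes three steps.
\begin{enumerate}
\item Bound the $\rho_{ii}$-term by $-C/\rho$, not $-C\sum_iF^{ii}/\rho$. Your own inequality $F^{ij}\rho_{ij}\ge-2h$ gives this.
\item For $i\ge2$, run the first-order condition backwards, $\beta\rho_i/\rho=-(v_{11i}/\lambda_1+av_iv_{ii})$. This turns $\beta F^{ii}\rho_i^2/\rho^2$ into $O(\beta^{-1})F^{ii}v_{11i}^2/\lambda_1^2$ plus a term absorbed by $aF^{ii}\lambda_i^2$.
\item Absorb the resulting $(1+O(\beta^{-1}))F^{ii}v_{11i}^2/\lambda_1^2$ by the off-diagonal concavity gain $2v_{11i}^2/\lambda_1$. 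This works because $F^{ii}=F^{11}+\lambda_1-\lambda_i\le(1+C\sqrt\delta)\lambda_1$.
\end{enumerate}

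Your uniform splitting, with every $i\ge2$ handled by the concavity gain, cannot replace the case split, because it requires $F^{ii}\le(2-c)\lambda_1$. For $n\ge4$ take eigenvalues $N(1,1,1,-x,0,\dots,0)\in\Gtwo$ with $x\uparrow1$. There $F^{44}=3N$, so the requirement fails. In this configuration $F^{11}\approx N$, and the first-order route works instead.

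The index $i=1$, which you single out as the main obstacle, is in fact the easy term: it is handled by the first-order condition and absorbed by $aF^{11}\lambda_1^2$. With the split in place, the second case yields $\lambda_1\le C\bigl(K^2(1+\rho^{-1})+KL+K\sqrt J\bigr)$ after absorption, which is the paper's bound.
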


\begin{proof}
\[
 \varphi(t)=(1-t/K^2)^{-1/8},
 \qquad 0\le t\le\norm{Dv}_\infty^2/2.
\]
Since \(K\ge1+\norm{Dv}_\infty\),
\[
 1\le\varphi\le C,\qquad
 \frac{\varphi'}{\varphi}\sim K^{-2},\qquad
 \frac{\varphi''}{\varphi}
 -3\left(\frac{\varphi'}{\varphi}\right)^2\ge cK^{-4}.
\]
Following \cite[Section~4]{ChouWang2001}, consider
\begin{equation}
 \Psi(x,\xi)=
 \rho(x)^4\varphi(\abs{Dv(x)}^2/2)v_{\xi\xi}(x),
 \qquad \xi\in\Sn.
\label{eq:hessian-test}
\end{equation}
At an interior positive maximum, use the standard eigenvalue perturbation argument, rotate coordinates so \(D^2v\) is diagonal, and take \(\xi=e_1\), where \(\lambda_1=v_{11}\) is the largest eigenvalue.
Write \(\Sigma_G=\sum_iG^{ii}\).
Differentiating \(\log\Psi\) once gives
\begin{equation}
 0=4\frac{\rho_i}{\rho}
   +\frac{\varphi'}{\varphi}v_kv_{ki}
   +\frac{v_{11i}}{\lambda_1}.
\label{eq:hessian-first-derivative}
\end{equation}
Differentiating twice and contracting with \(G^{ii}\) gives
\begin{equation}
\begin{aligned}
0\ge {}&
4G^{ii}\left(\frac{\rho_{ii}}{\rho}
                  -\frac{\rho_i^2}{\rho^2}\right)\\
&+\frac{\varphi'}{\varphi}G^{ii}
       \left(v_{ki}^2+v_kv_{kii}\right)
+\left(\frac{\varphi''}{\varphi}
       -\left(\frac{\varphi'}{\varphi}\right)^2\right)
 G^{ii}(v_kv_{ki})^2\\
&+\frac{G^{ii}v_{11ii}}{\lambda_1}
 -\frac{G^{ii}v_{11i}^2}{\lambda_1^2}.
\end{aligned}
\label{eq:hessian-second-derivative}
\end{equation}
The differentiated equation is
\[
 G^{ii}v_{iik}=b_k,\qquad
 G^{ii}v_{ii11}
 +G^{ij,rs}v_{ij1}v_{rs1}=b_{11}.
\]
Concavity makes the quadratic term in the second identity favorable.
Also, by concavity and homogeneity,
\begin{equation}
 G^{ii}v_{ii}=b,\qquad
 G^{ii}W_{ii}\ge G(D^2W),
\label{eq:rho-concavity}
\end{equation}
so \(D^2W\) enters the contracted \(G^{ii}\rho_{ii}\) term with a favorable sign; no upper bound for \(D^2W\) is needed.

Since \(b\ge c_0\), the definitions of \(L\) and \(J\) give \(\norm{Db}_\infty\le CL\) and \(\norm{D^2b}_\infty\le C(J+L^2)\).
The terms in \eqref{eq:hessian-second-derivative} containing derivatives of the right side satisfy
\[
 \left|\frac{\varphi'}{\varphi}v_kb_k\right|
 \le C\frac{L}{K},
 \qquad
 \frac{b_{11}}{\lambda_1}\ge-\frac{C(J+L^2)}{\lambda_1}.
\]
Moreover, \(\abs{D\rho}\le K\), while homogeneity and concavity give
\[
 G^{ii}\rho_{ii}
 =G^{ii}(W_{ii}-v_{ii})
 \ge G(D^2W)-b\ge-C.
\]
Thus the terms containing \(D\rho\) are bounded by \(C\Sigma_G K^2\rho^{-2}\) in the nonseparated case and by \(CG^{11}K^2\rho^{-2}\) in the separated case.
These estimates explain all nonfixed quantities in \eqref{eq:hessian-case-one} and \eqref{eq:hessian-case-two}; no derivative of the original function \(g\) occurs.

We now retain the parameters in the completion of squares.
This is the case \(k=2\) of \cite[(4.4)--(4.13)]{ChouWang2001}.
If the second eigenvalue is a fixed fraction of \(\lambda_1\), insertion of \eqref{eq:hessian-first-derivative}--\eqref{eq:rho-concavity} into \eqref{eq:hessian-second-derivative}, followed by the completion of squares in their equations (4.7)--(4.10), gives
\begin{equation}
0\ge  cK^{-2}\Sigma_G\lambda_1^2
 -C\Sigma_G K^2\rho^{-2} -C(1+\rho^{-1})-CL/K-C(J+L^2)/\lambda_1.
\label{eq:hessian-case-one}
\end{equation}
Here the first term comes from \(\varphi'/\varphi\sim K^{-2}\).
The second comes from the terms containing \(D\rho\) after \eqref{eq:hessian-first-derivative} is used.
If that second term cannot be absorbed by the first, then
\begin{equation}
 \rho^2\lambda_1^2\le CK^4,
\label{eq:direct-hessian-bound}
\end{equation}
already.
If it is absorbed, then
\[
 \Sigma_G=\frac{(n-1)\Delta v}{2b}\ge c\lambda_1.
\]
For \(\lambda_1\ge1\), \eqref{eq:hessian-case-one} therefore gives an inequality stronger than
\begin{equation}
 \frac{\lambda_1}{K^2}
 \le C\left(\rho^{-1}+\frac{L}{K}
                  +\frac{J+L^2}{\lambda_1}+1\right),
\label{eq:hessian-absorption}
\end{equation}
while \(\lambda_1<1\) is harmless.

When the second eigenvalue is small compared with \(\lambda_1\), equations (4.12)--(4.13) of the cited paper give, before absorption,
\begin{equation}
\begin{aligned}
0\ge {}&
 cK^{-2}G^{11}\lambda_1^2
 -CG^{11}K^2\rho^{-2}\\
&-C\left(1+\rho^{-1}+L/K+(J+L^2)/\lambda_1\right).
\end{aligned}
\label{eq:hessian-case-two}
\end{equation}
Again, if the term containing \(D\rho\) cannot be absorbed, we obtain \eqref{eq:direct-hessian-bound}.
Otherwise the identity
\[
 \lambda_1(\Delta v-\lambda_1)\ge c\sigma_2(D^2v)=c b^2
\]
implies
\[
 G^{11}\lambda_1^2
 =\frac{(\Delta v-\lambda_1)\lambda_1^2}{2b}\ge cb\lambda_1,
\]
and \eqref{eq:hessian-case-two} gives exactly \eqref{eq:hessian-absorption}.
Thus both eigenvalue cases give the stated dichotomy, with no unspecified dependence on \(K\).

To finish the scalar algebra, multiply \eqref{eq:hessian-absorption} by \(\lambda_1\) and solve the resulting quadratic inequality:
\[
 \lambda_1
 \le C\left(K^2(1+\rho^{-1})+KL+K\sqrt J\right).
\]
After multiplication by \(\rho^4\varphi\), the powers \(\rho^4,\rho^3\) are bounded by constants depending only on \(\rho_1\).
In the alternative \eqref{eq:direct-hessian-bound}, \(\rho^4\lambda_1\le CK^2\rho^3\).
Hence
\begin{equation}
 \max_{\overline\Omega\times\Sn}\Psi
 \le C\left(1+K^2+KL+K\sqrt J\right).
\label{eq:hessian-test-bound}
\end{equation}
Finally, on \(U\), \(\rho\ge\rho_0\), \(\varphi\ge1\), and \(\Delta v\le n\lambda_{\max}\).
Thus \eqref{eq:hessian-test-bound} and the preceding estimate imply \eqref{eq:hessian-estimate}.
\end{proof}

\begin{remark}
Theorem~4.1 of \cite{ChouWang2001} is stated in a globally strictly \((k-1)\)-convex domain.
The present use is the local interior maximum calculation in its proof.
Strict boundary convexity does not occur in equations (4.4)--(4.13); the factor \(\rho^4\) forces the maximum away from \(\partial\Omega\).
Thus the proof of \cref{lem:hessian-bound-v} uses only the interior calculation in that paper.
\end{remark}

\begin{lemma}
\label{lem:maximum-point}
In the setting of \cref{lem:component}, suppose \(v\) satisfies \eqref{eq:adaptive-dirichlet} on a fixed neighborhood of \(\overline\Omega\), and suppose
\begin{equation}
 \Delta v\ge cM\quad\hbox{at some point of }U.
\label{eq:large-trace-point}
\end{equation}
If \(P=o(M)\), then, for large \(M\), the global maximum of \(\Psi\) in \eqref{eq:hessian-test} is an interior point of \(\{\Delta v\ge\tau_*P\}\).
At that point \(v\) satisfies \(G(D^2v)=b\) through second order.
\end{lemma}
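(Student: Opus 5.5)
The plan is to compare the value of \(\Psi\) at the maximum point with a lower bound coming from the large-trace point, and to show that on the moderate-trace region \(\Psi\) is too small to compete. First, \(\Psi\) attains a positive global maximum on \(\overline\Omega\times\Sn\). Indeed, \(\rho^4\) vanishes on \(\partial\Omega\), while \(v_{\xi\xi}\) is positive for suitable \(\xi\) whenever \(\Delta v>0\). Hence the maximum point \(x_0\) lies in the interior of \(\Omega\).

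\emph{Lower bound for the maximum.} Take a point \(y\in U\) where \(\Delta v(y)\ge cM\), and let \(\xi\) be the top eigenvector there, so that \(v_{\xi\xi}(y)=\lambda_{\max}\ge\Delta v(y)/n\ge cM/n\). Using \(\rho\ge c_\rho\) on \(U\) from \eqref{eq:component-bounds} and \(\varphi\ge1\), we get
\[
 \max\Psi\ge c_\rho^4\,\frac{cM}{n}=c'M .
\]

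\emph{Upper bound off the plateau.} At any point where \(\Delta v<\tau_*P\), \cref{lem:newton} gives \(v_{\xi\xi}\le\lambda_{\max}<\Delta v<\tau_*P\). Together with \(\rho\le C_\rho\) and \(\varphi\le C\), this yields \(\Psi\le C\,C_\rho^4\tau_*P\). Since \(P=o(M)\), the right side is smaller than \(c'M\) for large \(M\). Therefore \(\Delta v(x_0)\ge\tau_*P\).

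\emph{Openness.} The set \(\{\Delta v\ge\tau_*P\}\) may not be open, so I will rerun the same comparison with a slightly smaller threshold. Any point with \(\Delta v<2\tau_*P\) also has \(\Psi\le CP<c'M\). Hence \(\Delta v(x_0)\ge2\tau_*P>\tau_*P\), and by continuity of \(\Delta v\) a neighborhood of \(x_0\) lies in \(\{\Delta v>\tau_*P\}\). This places \(x_0\) in the interior of the plateau region.

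\emph{The equation at \(x_0\).} The point \(x_0\in\Omega\Subset D_{\rm cmp}\), where \(\zeta=1\). On the neighborhood just found, \(\eta(\Delta v/P)\equiv1\), so \eqref{eq:adaptive-dirichlet} reads \(G(D^2v)+(g-b)=g\). That is, \(G(D^2v)=b\) holds identically near \(x_0\), and hence through second order as well.

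The only delicate point is making the constants uniform. The constants \(c_\rho\), \(C_\rho\) and the bound on \(\varphi\) must not depend on \(M\). This holds because \(\varphi\in[1,C]\) with an absolute \(C\) by its definition with \(K\), and because \cref{lem:component} gives \(\rho\)-bounds that do not depend on derivatives of the right side.
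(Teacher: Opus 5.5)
Your proof is correct and follows the paper's argument. You bound $\max\Psi$ below by $cM$ at the large-trace point in $U$ (via $\rho\ge c_\rho$, $\varphi\ge1$, $\lambda_{\max}\ge\Delta v/n$), bound $\Psi\le CP=o(M)$ wherever $\Delta v<\tau_*P$ (via $\lambda_{\max}<\Delta v$), and exclude $\partial\Omega$ because $\rho=0$ there. The only difference is your margin step with threshold $2\tau_*P$. It places the maximum in the open set $\{\Delta v>\tau_*P\}$, so $G(D^2v)=b$ holds identically nearby. The paper instead handles the borderline case $\Delta v=\tau_*P$ by using that every derivative of $\eta$ vanishes at $\tau_*$; your version does not need that property.
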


\begin{proof}
On \(U\), \(\rho\ge\rho_0\), and \(\lambda_{\max}\ge\Delta v/n\); hence \eqref{eq:large-trace-point} gives \(\max\Psi\ge cM\).
At every point where \(\Delta v<\tau_*P\),
\[
 \lambda_{\max}<\Delta v<\tau_*P,
\]
so \(\Psi\le CP=o(M)\).
Therefore the maximum cannot occur there.
The boundary is excluded because \(\rho=0\) there.
Since \(\overline\Omega\Subset\{\zeta=1\}\) and every derivative of \(\eta\) vanishes at and above \(\tau_*\), the first two derivatives of the equation at the maximum agree with those of \(G(D^2v)=b\).
\end{proof}

\begin{proof}[Proof of \cref{prop:comparison-upper-trace}]
Apply \cref{lem:component} to \(v\) using \(U\Subset D_{\rm cmp}\Subset D_{\rm flat}\).
It gives \(W,\Omega\), and \(\rho=W-v\), with the uniform bounds in \eqref{eq:component-inclusions}--\eqref{eq:component-bounds}.
The possible components are covered by finitely many balls of one fixed radius contained in \(D_{\rm flat}\).
Hence \cref{lem:gradient-bound} gives
\[
 K\le C\left(1+P^{1/2}+L^{1/5}\right).
\]
By \cref{lem:maximum-point}, the maximum of \eqref{eq:hessian-test} occurs where \(G(D^2v)=b\) through second order.
We may therefore apply \cref{lem:hessian-bound-v}, which gives the asserted upper bound for \(\Delta v\) on \(U\).
\end{proof}

\section{Completion of the proofs}
\label{sec:contradiction}

The preceding sections give the lower and upper trace estimates needed for the contradiction.
We now combine them to prove the Hessian bound, derive the \(C^{2,\alpha}\) estimate, and pass to viscosity solutions.

\subsection{The Hessian bound}
\begin{theorem}
\label{thm:hessian-bound}
Under the hypotheses of \cref{thm:smooth},
\begin{equation}
 \norm{D^2u}_{L^\infty(B_{3/4})}
 \le C\left(n,\alpha,f_0,\norm f_{C^\alpha(B_2)},
                  \norm u_{L^\infty(B_2)}\right).
\label{eq:interior-hessian-bound}
\end{equation}
\end{theorem}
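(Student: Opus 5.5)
We argue by contradiction and compactness, following the outline in the introduction. If \eqref{eq:interior-hessian-bound} fails, there are smooth admissible solutions \(u_j\) with \(\sigma_2(D^2u_j)=f_j\), uniformly bounded data \(f_0\le f_j\le f_1\), \([f_j]_{C^\alpha}\le C\), \(\norm{u_j}_{L^\infty(B_2)}\le A_0\), and \(\sup_{B_{3/4}}\abs{D^2u_j}\to\infty\). By \cref{lem:newton} this is equivalent to \(\sup\Delta u_j\to\infty\). A harmless rescaling/covering replaces \(B_{1/2}\) by \(B_{3/4}\) in \cref{lem:stopping}. We then apply \cref{lem:stopping} and \cref{prop:large-trace-set}. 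This gives \(M_j\to\infty\), \(R_j=\ell_j/F(M_j)\), \(h_j\sim(M_jF(M_j))^{-1}\), and sets \(E_j\subset B_{R_j}(x_j)\) with \(\Delta u_j\ge cM_j\) on \(E_j\) and \(\abs{E_j}\ge cR_j^{n-1}h_j\). Passing to a subsequence, \(x_j\to z_*\in\overline B_{5/8}\). We fix the nested balls \eqref{eq:nested-domains} centered at \(z_*\) and the cutoff \(\zeta\) of \eqref{eq:fixed-cutoff}, all independent of \(j\).

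Next we choose parameters \(\eps_j=M_j^{-1/8}\) and \(P_j=M_j^{1-\alpha/8}\). On \(D\) we take \(b_j=b_{\eps_j}\) from \cref{lem:lower-mollification} applied to \(g_j=\sqrt{f_j}\), so that \(\delta_j\le CM_j^{-\alpha/8}\), \(L_j\le CM_j^{(1-\alpha)/8}\), and \(J_j\le CM_j^{(2-\alpha)/8}\). Let \(v_j\) solve \eqref{eq:adaptive-dirichlet} via \cref{thm:dirichlet-problem}. Then \(\norm{v_j}_{L^\infty(D)}\le C\) by \eqref{eq:linfty-bound}. To apply \cref{prop:comparison-lower-trace} we check its hypothesis. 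With \(S_j=M_jR_j^{n-1}h_j\sim R_j^{n-1}/F(M_j)\), we get
\[
 \frac{M_j\delta_j^2}{P_jS_j}\le C\,M_j^{\alpha/8-\alpha/4}\,\frac{F(M_j)^{n}}{\ell_j^{\,n-1}}\longrightarrow0,
\]
since the polynomial factor \(M_j^{-\alpha/8}\) beats the logarithmic powers \(F(M_j)^n\). Hence \(\Delta v_j\ge cM_j\) on a set \(E_j'\subset E_j\subset U\) of positive measure, so in particular at some point of \(U\).

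We then apply \cref{prop:comparison-upper-trace}, which is legitimate since \(P_j=o(M_j)\) and \(v_j\) is \(L^\infty\)-bounded. It gives
\[
 K_j\le C(1+M_j^{1/2-\alpha/16}+M_j^{(1-\alpha)/40}),
\]
so \(K_j\le CM_j^{1/2-\alpha/16}\). It also gives \(\sup_U\Delta v_j\le C(1+K_j^2+K_jL_j+K_j\sqrt{J_j})\). The three terms are of orders \(M_j^{1-\alpha/8}\), \(M_j^{1/2-\alpha/16+(1-\alpha)/8}\), and \(M_j^{1/2-\alpha/16+(2-\alpha)/16}=M_j^{5/8-\alpha/8}\). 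All exponents are strictly less than \(1\), so \(\sup_U\Delta v_j=o(M_j)\). This contradicts \(\Delta v_j\ge cM_j\) at a point of \(U\). Hence \(\Delta u\) is bounded on \(B_{3/4}\) in terms of the data, and the two-sided eigenvalue bounds of \cref{lem:newton} give \eqref{eq:interior-hessian-bound}.

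The main point to watch is the bookkeeping of the exponents. The lower-trace hypothesis needs \(\delta_j^2/P_j\) to be small compared with \(\abs{E_j}\), which is only polylogarithmically smaller than \(R_j^{n-1}/M_j\). The upper bound needs \(P\), \(L^{2/5}\), \(KL\), and \(K\sqrt J\) to be \(o(M)\). The choice \(\eps=M^{-1/8}\), \(P=M^{1-\alpha/8}\) satisfies both with polynomial room, and that room absorbs all the \(\ell_j\) and \(F(M_j)\) factors. One also has to verify that the constants in \cref{prop:comparison-lower-trace,prop:comparison-upper-trace} are uniform in \(j\). This holds because \(D,\zeta,U\) are fixed before \(j\) varies, and \(b_j\), \(g_j\) lie in a fixed positive interval.
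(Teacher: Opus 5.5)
Your proposal is correct and follows the paper's proof essentially step for step: the same reduction to \cref{lem:stopping}, the same large-trace sets and fixed domains centered at the limit point, the same parameter choice $\eps_j=M_j^{-1/8}$, $P_j=M_j^{1-\alpha/8}$, and the same applications of \cref{prop:comparison-lower-trace,prop:comparison-upper-trace}. Your exponent bookkeeping matches the paper's: the transfer quantity is of order $M_j^{-\alpha/8}F(M_j)^n\to0$, and the upper-bound terms are of orders $M_j^{1-\alpha/8}$, $M_j^{5/8-3\alpha/16}$ and $M_j^{5/8-\alpha/8}$; the only difference is cosmetic, in that you keep the factor $\ell_j^{-(n-1)}$, which the paper drops using $\ell_j\ge1$.
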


\begin{proof}
Assume the assertion is false.
Then there exist smooth admissible solutions \(u_j\), with
\begin{equation}
 \norm{u_j}_{L^\infty(B_2)}\le A,\qquad
 f_j\ge f_0,\qquad
 \norm{f_j}_{C^\alpha(B_2)}\le\Lambda,
\label{eq:contradiction-data}
\end{equation}
but
\[
 \sup_{B_{3/4}}\abs{D^2u_j}\longrightarrow\infty.
\]
Since \(\abs{D^2u_j}<\Delta u_j\), the traces $\Delta u_j$ are unbounded.
Cover \(\overline{B_{3/4}}\) by finitely many balls whose fourfold dilates are compactly contained in \(B_2\).
After passing to a subsequence, the trace is unbounded on the concentric ball with half the radius of one fixed member of this cover.
A fixed translation and dilation reduce that ball to the setting of \cref{lem:stopping}; the bounds for the right side and the \(L^\infty\) norm of the solution change only by fixed factors.
We retain the original notation after this reduction.
Set \(g_j=\sqrt{f_j}\).
The functions \(g_j\) have common positive upper and lower bounds and a common \(C^\alpha\) bound.

By \cref{sec:large-trace,sec:large-trace-set}, after passing to a subsequence we have
\[
 M_j\to\infty,\qquad
 E_j\Subset U,\qquad
 \Delta u_j\ge cM_j\ \hbox{on }E_j,\qquad
 \abs{E_j}\ge c\frac{S_j}{M_j},
\]
where
\begin{equation}
 S_j=\frac{\ell_j^{\,n-1}}{F(M_j)^n},\qquad
 S_j^{-1}\le F(M_j)^n,\qquad
 F(M)=(\log(e+M))^2.
\label{eq:large-set-scale}
\end{equation}
The fixed domains and cutoff are those of \eqref{eq:nested-domains}--\eqref{eq:fixed-cutoff}.

Choose
\begin{equation}
 \eps_j=M_j^{-1/8},\qquad
 P_j=M_j^{\,1-\alpha/8}.\qquad
\label{eq:parameter-choice}
\end{equation}
Let \(b_j=b_{\eps_j}\) be supplied by \cref{lem:lower-mollification}, and write
\[
 \delta_j=\norm{g_j-b_j}_\infty,\qquad
 L_j=\norm{D(b_j^2)}_\infty,\qquad
 J_j=\norm{D^2(b_j^2)}_\infty.
\]
Uniformly in \(j\),
\begin{equation}
 \delta_j\le CM_j^{-\alpha/8},\qquad
 L_j\le CM_j^{(1-\alpha)/8},\qquad
 J_j\le CM_j^{(2-\alpha)/8}.
\label{eq:smoothing-rates}
\end{equation}

By \cref{thm:dirichlet-problem}, there is a smooth admissible \(v_j\) solving
\[
 G(D^2v_j)+\zeta(g_j-b_j)
       \eta(\Delta v_j/P_j)=g_j,\qquad
 v_j=u_j\quad\hbox{on }\partial D.
\]
By \eqref{eq:solution-comparison}, the functions \(v_j\) are uniformly bounded in \(L^\infty\) in terms of \eqref{eq:contradiction-data}.
Moreover,
\begin{equation}
 \frac{M_j\delta_j^2}{P_jS_j}
 \le C M_j^{-\alpha/8}(\log(e+M_j))^{2n}
 \longrightarrow0.
\label{eq:transfer-smallness}
\end{equation}
Thus \cref{prop:comparison-lower-trace} gives a subset \(E'_j\subset E_j\), of measure at least \(\abs{E_j}/2\), on which
\begin{equation}
 \Delta v_j\ge cM_j.
\label{eq:transferred-trace}
\end{equation}

Since \(P_j/M_j=M_j^{-\alpha/8}\to0\), we may apply \cref{prop:comparison-upper-trace}.
It gives \(W_j,\Omega_j\) and
\begin{equation}
 K_j:=1+\norm{Dv_j}_{L^\infty(\Omega_j)}
        +\norm{DW_j}_{L^\infty(\Omega_j)}
 \le C\left(1+P_j^{1/2}+L_j^{1/5}\right).
\label{eq:comparison-gradient-bound}
\end{equation}
It is essential here that the gradient is controlled on the entire comparison component, not merely on \(E'_j\).
The same proposition gives
\begin{equation}
 cM_j\le\sup_U\Delta v_j
 \le C\left(1+K_j^2+K_jL_j+K_j\sqrt{J_j}\right).
\label{eq:contradiction-inequality}
\end{equation}

It remains to check the powers.
From \eqref{eq:parameter-choice}, \eqref{eq:smoothing-rates}, and \eqref{eq:comparison-gradient-bound},
\[
 K_j\le C M_j^{\,1/2-\alpha/16}.
\]
Consequently
\[
 K_j^2\le CM_j^{1-\alpha/8},\qquad
 K_jL_j\le CM_j^{5/8-3\alpha/16},\qquad
 K_j\sqrt{J_j}\le CM_j^{5/8-\alpha/8}.
\]
All three powers are strictly smaller than one for every fixed \(0<\alpha<1\).
Hence the right side of \eqref{eq:contradiction-inequality} is \(o(M_j)\), contradicting its left side.
This proves \eqref{eq:interior-hessian-bound}.
\end{proof}

\begin{remark}
The power gap is not uniform as \(\alpha\downarrow0\), and no such uniformity is claimed.
In fact, we first fix \(\alpha\in(0,1)\), then send \(M_j\to\infty\).
The logarithmic factor in \eqref{eq:transfer-smallness} is dominated by the strictly negative power \(-\alpha/8\) for every fixed \(\alpha\).
\end{remark}

\subsection{The \texorpdfstring{\(C^{2,\alpha}\)}{C2-alpha} estimate}

\begin{proposition}
\label{prop:perturbative-schauder}
Fix \(0<\alpha<1\) and ellipticity constants \(0<\lambda\le\Lambda\).
There is \(\delta>0\) with the following property.
Let \(F\in C^1(\operatorname{Sym}(n))\) be uniformly elliptic, \(F(0)=0\), and suppose that for some constant matrix \(a=(a^{ij})\), with \(\lambda I\le a\le\Lambda I\),
\begin{equation}
 \sup_N\norm{DF(N)-a}\le\delta.
\label{eq:operator-closeness}
\end{equation}
If
\[
 F(D^2w)=q\quad\hbox{in }B_1,\qquad
 \norm w_{L^\infty(B_1)}\le1,\qquad
 \norm q_{C^\alpha(B_1)}\le\delta,
\]
then
\[
 \norm w_{C^{2,\alpha}(B_{1/2})}\le C(n,\lambda,\Lambda,\alpha).
\]
The same conclusion holds if \eqref{eq:operator-closeness} is known only on the matrix ball visited by the equation and the operator is extended outside that ball without changing the ellipticity constants or the \(C^1\) closeness.
\end{proposition}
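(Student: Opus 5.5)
The plan is to treat the equation as a small perturbation of the constant-coefficient equation \(a^{ij}w_{ij}=q\) and use the standard Caffarelli-type perturbative iteration. Here the closeness of \(DF\) to \(a\) is global, so no flatness of the solution is needed. First rewrite the equation as a linear equation with measurable coefficients. Since \(F(0)=0\),
\[
 F(D^2w)=\int_0^1 DF(sD^2w)\,ds:D^2w,
\]
so
\[
 a^{ij}w_{ij}=q-\bigl(F(D^2w)-a:D^2w\bigr)=:q+R(D^2w),
\]
where \(R(N)=\int_0^1(DF(sN)-a)\,ds:N\) satisfies \(\abs{R(N)}\le\delta\abs N\). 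More usefully, \(R\) has Lipschitz constant at most \(\delta\) in \(N\): \(\abs{R(N)-R(N')}\le\delta\abs{N-N'}\), since \(DR=DF-a\). After a linear change of variables, \(a\) becomes the identity, at the cost of constants depending on \(\lambda,\Lambda\).

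Next, the key step is an improvement-of-flatness lemma. Suppose \(\abs w\le1\) on \(B_1\) and \(\norm q_{C^\alpha}\le\delta\). Let \(p\) be the solution of \(\Delta p=q(0)+R(\text{const})\) (any harmonic-type comparison) with \(p=w\) on \(\partial B_{1}\), or more simply compare with the harmonic replacement \(h\) of \(w\) in \(B_{3/4}\). The difference \(w-h\) satisfies a uniformly elliptic equation (\(F\) itself is uniformly elliptic, so \(w\) is a viscosity solution of a \(\lambda,\Lambda\) Pucci class problem). Its right side is controlled by \(\abs{F(D^2w)-\Delta w}\), which is small relative to \(D^2w\) but not absolutely small. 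To avoid needing \(D^2w\) a priori, I would argue by compactness instead. Take sequences \(F_k\) with \(\delta_k\to0\), \(q_k\to\) const in \(C^\alpha\), and \(w_k\) bounded. Krylov--Safonov gives uniform \(C^\beta\) bounds, and \(F_k\to a:N+\)const locally uniformly because \(DF_k\to a\) uniformly and \(F_k(0)=0\). Stability of viscosity solutions then yields a limit solving a constant-coefficient linear equation, hence a limit that is quadratically approximable with \(C^{2,1}\) bounds. This gives a quadratic polynomial \(P_1\) with \(F(D^2P_1)=q(0)\) and \(\sup_{B_\rho}\abs{w-P_1}\le\rho^{2+\alpha}\) for a fixed \(\rho\), provided \(\delta\) is small. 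The polynomial can be corrected to satisfy \(F(D^2P_1)=q(0)\) exactly, since \(F\) is \(C^1\)-close to linear and hence invertible along scalar shifts.

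Then iterate. Set \(w_1(x)=\rho^{-2-\alpha}(w-P_1)(\rho x)\). It solves \(F_1(D^2w_1)=q_1\) with
\[
 F_1(N)=\rho^{-\alpha}\bigl(F(D^2P_1+\rho^\alpha N)-q(0)\bigr),\qquad q_1(x)=\rho^{-\alpha}(q(\rho x)-q(0)).
\]
Then \(F_1(0)=0\) and \(DF_1(N)=DF(D^2P_1+\rho^\alpha N)\), so \eqref{eq:operator-closeness} holds with the same \(\delta\) and ellipticity, and \(\norm{q_1}_{C^\alpha}\le\delta\) since \([q_1]_{C^\alpha}=[q]_{C^\alpha}\) and \(q_1(0)=0\). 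This invariance under rescaling is exactly why the global closeness hypothesis is used; the last sentence of the statement holds because the rescaled matrices stay in the visited ball up to the extension. Iterating gives polynomials \(P_k\) with \(\abs{w-P_k}\le\rho^{k(2+\alpha)}\) on \(B_{\rho^k}\) and geometrically convergent coefficients. The same argument applies at every point of \(B_{1/2}\), which yields the pointwise \(C^{2,\alpha}\) characterization and hence the estimate.

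The main obstacle is the compactness step. One must ensure the limit operator is a linear operator plus a constant, and also handle the normalization \(F(D^2P_1)=q(0)\) uniformly. I would first try to make this work through the uniform convergence \(F_k(N)-a_k:N\to0\) on bounded sets.
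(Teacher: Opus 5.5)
Your argument is correct. It gives a self-contained replacement for what the paper does, which is simply to cite Caffarelli's 1989 perturbation theorem. That theorem runs the same improvement-of-flatness iteration. At each scale, however, it compares \(w\) with a solution of the frozen homogeneous equation \(F(D^2h)=\mathrm{const}\), with the error controlled via ABP. It also needs, as an input, interior \(C^{2,\bar\alpha}\) estimates with \(\bar\alpha>\alpha\) for that frozen equation, which are usually supplied by concavity and Evans--Krylov.

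Here \(F\) is not assumed concave, and the cut-off operator \(\widehat F_{x_0}\) in the proof of \cref{thm:smooth} is in general not concave. So using the citation literally also requires checking those frozen estimates, e.g.\ by differentiating and applying a Cordes--Nirenberg estimate to coefficients \(DF(D^2h)\) within \(\delta\) of \(a\). Your compactness route makes this unnecessary. The condition \(\sup_N\norm{DF(N)-a}\le\delta\) is preserved by \(F\mapsto\rho^{-\alpha}\bigl(F(D^2P+\rho^\alpha N)-q(0)\bigr)\). Hence every blow-up limit solves a constant-coefficient linear equation, whose \(C^3\) bounds beat \(\rho^{2+\alpha}\) for every \(\alpha<1\) at once. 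The price is that \(\delta\) and \(C\) are not explicit, which the statement allows.

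Two small bookkeeping points:

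\begin{itemize}
\item With \(\norm q_{C^\alpha}=\norm q_{L^\infty}+[q]_{C^\alpha}\), you only get \(\norm{q_1}_{L^\infty(B_1)}\le[q]_{C^\alpha}\) and \([q_1]_{C^\alpha}\le[q]_{C^\alpha}\), not \(\norm{q_1}_{C^\alpha}\le\delta\). So state the one-step lemma under \(\norm q_{L^\infty(B_1)}\le\delta\), which is all the compactness step uses.
\item The last sentence of the statement needs no rescaling argument. The extended operator satisfies the global hypothesis, and \(w\) solves the extended equation.
\end{itemize}
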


This is the perturbation result proved in \cite{Caffarelli1989}.

\begin{proof}[Proof of \cref{thm:smooth}]
\Cref{thm:hessian-bound} first gives a Hessian bound depending only on the data on \(B_{3/4}\).  If \(\lambda_i\) is an eigenvalue of \(D^2u\), then \(\tr(D^2u)-\lambda_i\) is an eigenvalue of \(\Slin(D^2u)\).  The estimate
\[
 \sigma_2(D^2u)\le
 \tr(D^2u)\big(\tr(D^2u)-\lambda_i\big),
 \qquad
 \tr(D^2u)-\lambda_i<2\tr(D^2u),
\]
together with \(f_0\le\sigma_2(D^2u)\le\norm f_\infty\), shows that \(DG(D^2u)=\Slin(D^2u)/(2\sqrt f)\) has fixed positive upper and lower bounds.
Let
\[
 K_0=\overline{\{D^2u(x):x\in B_{3/4}\}}\Subset\Gtwo.
\]
To apply Evans--Krylov without assuming that \(G\) is defined outside its cone, extend it to all symmetric matrices by its supporting planes:
\[
 \widetilde G(N)
 =\inf_{A\in K_0}\{G(A)+\langle DG(A),N-A\rangle\}.
\]
The infimum of affine functions is concave.
Every slope \(DG(A)\) has the same ellipticity bounds, so \(\widetilde G\) is uniformly elliptic.
Concavity of \(G\) shows that every supporting plane lies above \(G\) on \(K_0\), while choosing \(A=N\) gives equality.
Thus
\[
 \widetilde G(D^2u)=G(D^2u)=\sqrt f
\quad\hbox{in }B_{3/4}.
\]
\Cref{prop:concave-estimates} gives, with a bound depending only on the data,
\begin{equation}
 D^2u\in C^{\beta_0}(B_{2/3})
\label{eq:seed-holder}
\end{equation}
for some \(\beta_0\in(0,\min\{\alpha,\bar\beta(n,\lambda,\Lambda)\})\).
Thus \(\beta_0\) depends on dimension, ellipticity, and \(\alpha\), as it must when the H\"older exponent of the right side is smaller than the universal Evans--Krylov exponent.

We now return to the original smooth operator \(G\), which is \(C^\infty\) on a fixed neighborhood of \(K_0\).
By \eqref{eq:seed-holder}, there is a radius \(r_0>0\), depending only on the data, such that for every \(x_0\in B_{1/2}\),
\[
 \osc_{B_{r_0}(x_0)}D^2u\le\delta_0,
\]
where \(\delta_0\) is chosen below.
Subtract the quadratic Taylor polynomial at \(x_0\) and rescale \(B_{r_0}(x_0)\) to \(B_1\) as follows.
Put \(A_0=D^2u(x_0)\) and define
\[
 w(y)=\frac{u(x_0+r_0y)-u(x_0)-r_0Du(x_0)\cdot y
 -\frac12r_0^2 y^TA_0y}{r_0^2},\qquad y\in B_1.
\]
Then \(w(0)=0\), \(Dw(0)=0\), and
\[
 D^2w(y)=D^2u(x_0+r_0y)-A_0,
 \qquad \norm w_{L^\infty(B_1)}\le C\delta_0.
\]
The last inequality follows by integrating \(D^2w\) twice along the segment from \(0\) to \(y\).
The equation becomes
\[
 F_{x_0}(D^2w)=q_{x_0}(y),
 \quad
 F_{x_0}(N):=G(A_0+N)-G(A_0),
 \quad
 q_{x_0}(y):=\sqrt{f(x_0+r_0y)}-\sqrt{f(x_0)}.
\]
On the matrix ball actually reached by \(D^2w\), Taylor's theorem gives
\[
 \norm{DG(A_0+N)-DG(A_0)}
 \le \norm{D^2G}_{L^\infty}\abs N\le C\delta_0,
\]
and \(a:=DG(A_0)\) has the fixed ellipticity constants.

Here is the promised global extension, written explicitly.
Because \(K_0\Subset\Gtwo\), after making \(\delta_0\) smaller the expression
\[
 H(N):=G(A_0+N)-G(A_0)-\langle a,N\rangle
\]
is defined for \(\abs N\le3\delta_0\).
Choose a smooth radial cutoff \(\chi\) which is one on \(\abs N\le\delta_0\), zero on \(\abs N\ge2\delta_0\), and satisfies \(\abs{D\chi}\le C/\delta_0\).
Set
\[
 \widehat{F}_{x_0}(N)=\langle a,N\rangle+\chi(N)H(N),
\]
where the product \(\chi H\) is declared to be zero outside the support of \(\chi\).
Since \(H(N)=O(\abs N^2)\) and \(DH(N)=O(\abs N)\),
\[
 \sup_{N\in\operatorname{Sym}(n)}
 \norm{D\widehat{F}_{x_0}(N)-a}\le C\delta_0.
\]
For \(\delta_0\) small this operator is globally uniformly elliptic, with slightly enlarged fixed constants, and it agrees with \(F_{x_0}\) on every matrix visited by \(D^2w\).

Moreover,
\[
 \norm{q_{x_0}}_{C^\alpha(B_1)}
 \le C r_0^\alpha[\sqrt f]_{C^\alpha(B_{3/4})}.
\]
Taking first \(\delta_0\) and then \(r_0\) small makes \(\norm w_{L^\infty(B_1)}\le1\) and puts all three quantities in the normalization of \cref{prop:perturbative-schauder}.
Scaling its estimate back and covering \(B_{1/2}\) by finitely many such balls yields
\begin{equation}
 [D^2u]_{C^\alpha(B_{1/2})}\le C.
\label{eq:exact-holder}
\end{equation}
Finally, an elementary interior finite difference estimate bounds \(\norm{Du}_{L^\infty(B_{1/2})}\) by \(\norm u_{L^\infty(B_{3/4})}+\norm{D^2u}_{L^\infty(B_{3/4})}\).
Together with \eqref{eq:interior-hessian-bound} and \eqref{eq:exact-holder}, this proves \eqref{eq:smooth-estimate}.
\end{proof}

\subsection{Viscosity solutions}
\label{sec:viscosity}
\begin{proposition}
\label{prop:guan-ball}
Let \(D\) be a Euclidean ball, let \(q\in C^\infty(\overline D)\) be strictly positive, and let \(\phi\in C^\infty(\partial D)\).
Then
\[
 G(D^2w)=q\quad\hbox{in }D,\qquad w=\phi\quad\hbox{on }\partial D
\]
has a unique smooth \(\Gtwo\)-admissible solution.
\end{proposition}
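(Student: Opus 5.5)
The plan is to reduce to \cref{prop:dirichlet}, since \(G(D^2w)=q\) is the same as \(\sigma_2(D^2w)=q^2\) on the admissible branch, and \(q^2\) is smooth and strictly positive on \(\overline D\). Everything then rests on producing a smooth, strictly \(2\)-admissible subsolution with boundary value \(\phi\). Uniqueness among admissible solutions will come separately from the comparison principle, as in the last step of the proof of \cref{thm:dirichlet-problem}.

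\emph{Step 1: extend the boundary data.} Write \(D=B_R(x_D)\) and extend \(\phi\) to some \(\Phi\in C^\infty(\overline D)\). One way is to take \(\Phi(x)=\psi(|x-x_D|)\,\phi\big(x_D+R(x-x_D)/|x-x_D|\big)\), where \(\psi\) is a radial cutoff equal to one near \(r=R\) and vanishing near \(r=0\).

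\emph{Step 2: build the subsolution.} Set
\[
 \underline w=\Phi+\kappa\big(\abs{x-x_D}^2-R^2\big).
\]
Then \(\underline w=\phi\) on \(\partial D\) and \(D^2\underline w=D^2\Phi+2\kappa I\). Because \(D^2\Phi\) is bounded on \(\overline D\), choosing \(\kappa\) large makes \(D^2\underline w\) positive definite, so it lies in \(\Gtwo\). Moreover
\[
 \sigma_2(D^2\underline w)\ge \binom n2\kappa^2\ge 1+\max_{\overline D}q^2,
\]
which is the strict subsolution inequality. This choice mirrors the function \(\underline u\) used in the proof of \cref{thm:dirichlet-problem}.

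\emph{Step 3: existence.} \cref{prop:dirichlet} now gives a unique smooth \(2\)-admissible solution of \(\sigma_2(D^2w)=q^2\) with \(w=\phi\) on \(\partial D\). On \(\Gtwo\), taking square roots shows that this \(w\) solves \(G(D^2w)=q\).

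\emph{Step 4: uniqueness.} Let \(w\) and \(\widehat w\) be two admissible solutions. Then
\[
 0=\sigma_2(D^2w)-\sigma_2(D^2\widehat w)=\Big\langle \Slin\!\Big(\tfrac{D^2w+D^2\widehat w}{2}\Big),D^2(w-\widehat w)\Big\rangle.
\]
Since \(\Gtwo\) is convex, the midpoint of the two Hessians lies in \(\Gtwo\), and \cref{lem:newton} shows the coefficient matrix is positive definite. The maximum principle, applied to \(w-\widehat w\) (which vanishes on \(\partial D\)), gives \(w=\widehat w\).

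No step is genuinely hard. The one point to watch is that \cref{prop:dirichlet} needs both a \emph{strict} subsolution and the boundary geometry. The geometry holds because \(D\) is a ball, which is uniformly convex and hence satisfies the CNS condition. Strictness is handled by the large-\(\kappa\) quadratic term in Step 2.
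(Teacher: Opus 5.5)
Your proposal is correct and follows essentially the same route as the paper: the same strict subsolution \(\Phi+\kappa(\abs{x-x_D}^2-R^2)\) built from a smooth extension of \(\phi\), a classical existence theorem on the ball, and uniqueness by comparison. The only difference is the existence theorem invoked: you pass to \(\sigma_2(D^2w)=q^2\) and use \cref{prop:dirichlet} (Caffarelli--Nirenberg--Spruck), whereas the paper applies Guan's Theorem~1.1 \cite{Guan2014} directly to the concave, degree-one homogeneous operator \(G\), and either theorem suffices here.
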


\begin{proof}
Choose a smooth extension \(\Psi\) of \(\phi\) to \(\overline D\), and set
\[
 \underline w=\Psi+A(\abs{x-x_D}^2-R^2).
\]
For \(A\) large, \(D^2\underline w=D^2\Psi+2AI\) is positive definite and
\[
 G(D^2\underline w)>\sup_Dq.
\]
Thus \(\underline w\) is a strict admissible subsolution with the required boundary value.
The operator is symmetric, elliptic, concave, homogeneous of degree one, and vanishes on \(\partial\Gtwo\).
Since \(D\) is a ball and \(\inf_Dq>0\), the remaining hypotheses of \cite[Theorem~1.1]{Guan2014} hold.
That theorem gives existence, and comparison gives uniqueness.
\end{proof}

\begin{proof}[Proof of \cref{thm:viscosity}]
Fix balls
\[
 D=B_R(x_0)\Subset D^+\Subset B_2.
\]
Set \(g=\sqrt f\).
Choose \(\eps_m\downarrow0\) smaller than \(\dist(D,\partial D^+)\), and define on \(\overline D\)
\[
 g_m=\rho_{\eps_m}*g,\qquad f_m=g_m^2.
\]
Only values of \(g\) in \(D^+\) enter the convolution.
Since the mollifier is nonnegative,
\begin{equation}
 g_m\ge\sqrt{f_0},\qquad
 g_m\to g\ \hbox{uniformly},\qquad
 \sup_m\norm{f_m}_{C^\alpha(D)}
 \le C(f_0,\norm f_{C^\alpha(D^+)}).
\label{eq:approximation-data}
\end{equation}
Choose \(\phi_m\in C^\infty(\partial D)\) converging uniformly to the boundary trace of \(u\).
By \cref{prop:guan-ball}, there is a smooth admissible solution
\begin{equation}
 G(D^2u_m)=g_m\quad\hbox{in }D,\qquad
 u_m=\phi_m\quad\hbox{on }\partial D.
\label{eq:smooth-approximants}
\end{equation}
The norms of the strict subsolutions and the high derivatives of \(\phi_m,g_m\) may diverge; none of them will be used.

We next prove uniform convergence and, at the same time, the $L^\infty$ bound required by the smooth theorem.
Put
\[
 \eta_m=\frac{\norm{g_m-g}_{L^\infty(D)}}{\sqrt{f_0}},
 \qquad a_m=1+\eta_m,\qquad b_m=1-\eta_m,
\]
discarding finitely many terms so that \(b_m>0\).
Let
\[
 S=\sup_Du,\qquad
 e_m=\norm{\phi_m-u}_{L^\infty(\partial D)}
\]
and define
\[
 U_m^-=a_mu-(a_m-1)S-e_m,\qquad
 U_m^+=b_mu+(1-b_m)S+e_m.
\]
Positive multiplication and addition of constants preserve \(\Phi_2(D)\), so \(U_m^-,U_m^+\in\Phi_2(D)\).
If \(\delta_m=\norm{g_m-g}_{L^\infty(D)}\), then
\[
 a_mg=g+\eta_mg\ge g+\delta_m\ge g_m,
 \qquad
 b_mg=g-\eta_mg\le g-\delta_m\le g_m.
\]
By \(\mu_2[u]=f\,dx\) and the homogeneity of the Hessian measure,
\[
 \mu_2[U_m^-]=a_m^2f\,dx\ge f_m\,dx=\mu_2[u_m],
 \qquad
 \mu_2[u_m]=f_m\,dx\ge b_m^2f\,dx=\mu_2[U_m^+].
\]
Since \(u\le S\),
\begin{equation}
 U_m^-\le\phi_m\le U_m^+\quad\hbox{on }\partial D.
\label{eq:barrier-boundary}
\end{equation}
We shall use the comparison theorem \cite[Theorem~3.1]{TW1997}: if \(p,q\in C(\overline D)\cap\Phi_2(D)\),
\[
 \mu_2[p]\ge\mu_2[q]\quad\hbox{in }D,
 \qquad p\le q\quad\hbox{on }\partial D,
\]
then \(p\le q\) in \(D\).
Applying the theorem first to \((U_m^-,u_m)\) and then to \((u_m,U_m^+)\) yields
\[
 U_m^-\le u_m\le U_m^+.
\]
Consequently
\begin{equation}
 \norm{u_m-u}_{L^\infty(D)}
 \le\eta_m\osc_Du+e_m\longrightarrow0.
\label{eq:uniform-convergence}
\end{equation}
In particular, \(u_m\) are uniformly bounded in \(L^\infty(D)\).

Choose \(D'\Subset D''\Subset D\).
The already proved smooth theorem, applied on balls whose fixed dilates stay inside \(D\), together with \eqref{eq:approximation-data} and \eqref{eq:uniform-convergence}, gives
\[
 \sup_m\norm{u_m}_{C^{2,\alpha}(D'')}\le C.
\]
For any \(0<\beta<\alpha\), compact embedding gives a subsequence converging in \(C^{2,\beta}(\overline{D'})\).
Uniform convergence \eqref{eq:uniform-convergence} identifies its limit with \(u\).
Finally, for fixed \(x,y\in D'\), pass to the limit in
\[
 \abs{D^2u_m(x)-D^2u_m(y)}
 \le C\abs{x-y}^\alpha.
\]
Thus \(D^2u\in C^\alpha(D')\) with the same bound.
Since \(D'\Subset D''\Subset D\Subset B_2\) were arbitrary, \(u\in C^{2,\alpha}_{\rm loc}(B_2)\).
\end{proof}

\bigskip

\noindent \textbf{Declaration on the use of AI: } The authors acknowledge the assistance of AI in refining certain arguments, checking technical details, and helping complete the final proof.
The authors have reviewed and verified all AI-assisted material and take full responsibility for the content of this paper.

\section*{Acknowledgement}
We sincerely thank Runze Li and Chuyu Liao for their helpful discussions and valuable suggestions at an early stage of this work.
Without their help, this article might not have been completed by this point.

\end{document}